\theoremstyle{plain}
\newtheorem{theo}{Theorem}
\newtheorem{lem}{Lemma}
\newtheorem{prop}{Proposition}
\newtheorem{defi}{Definition}
\newtheorem{conj}{Conjecture}
\theoremstyle{remark}
\newtheorem{nota}{Notation}
\newtheorem{rem}{Remark}
\newcommand{\IGl}{\mathrm{IG}\left(2,2 n+1\right)}
\newcommand{\IG}{\mathrm{IG}}
\newcommand{\G}{\mathrm{G}}
\newcommand{\N}{\mathbb{N}}
\newcommand{\ii}{\mathbf{i}}
\newcommand{\dual}[1]{#1^\vee}
\renewcommand{\H}{\mathrm{H}}
\newcommand{\UU}{\mathcal{U}}
\renewcommand{\SS}{\mathcal{S}}
\newcommand{\codim}{\mathrm{codim}\,}
\newcommand{\Hom}[2]{\mathrm{Hom}\left(#1,#2\right)}
\newcommand{\C}{\mathbb{C}}
\newcommand{\Z}{\mathbb{Z}}
\newcommand{\HH}{\mathcal{H}}
\newcommand{\OO}{\mathcal{O}}
\newcommand{\Sp}{\mathrm{Sp}_{2n+1}}
\newcommand{\ra}{\rightarrow}
\newcommand{\SEC}[3]{0\rightarrow #1\rightarrow #2 \rightarrow #3\rightarrow 0}
\newcommand{\T}[2]{\mathrm{T}_{#1}\:#2}
\renewcommand{\O}{\mathbb{O}}
\newcommand{\M}{\overline{\mathcal{M}}}
\newcommand{\Mod}[2]{\overline{\mathcal{M}}_{0,#1}\left(\IG,#2\right)}
\renewcommand{\P}[1]{\mathbb{P}^{#1}}
\newcommand{\YY}{\mathcal{Y}}
\newcommand{\db}[1]{\mathcal{D}^b\left(#1\right)}
\begin{document}


\title{Quantum cohomology of the odd symplectic Grassmannian of lines}
\author{Clélia Pech \\ \url{clelia.pech@ujf-grenoble.fr}}
\date{}

\maketitle


\begin{abstract}
Odd symplectic Grassmannians are a generalization of symplectic Grassmannians to odd-dimensional spaces. Here we compute the classical and quantum
cohomology of the odd symplectic Grassmannian of lines. Although these varieties are not homogeneous, we obtain Pieri and Giambelli formulas that 
are very similar to the symplectic case. We notice that their quantum cohomology is semi-simple, which enables us to check Dubrovin's conjecture 
for this case.
\end{abstract}


\section*{Introduction}

The quantum cohomology of homogeneous varieties has been extensively studied (see \cite{tamvakis} for references). Other well-known examples are toric varieties, yet apart from these settings, there are only few examples where the quantum cohomology has been explicitly determined. Quasi-homogeneous varieties provide interesting non toric and non homogeneous examples. Among these two Hilbert schemes have been studied, $\mathrm{Hilb}(2,\P{2})$ \cite{graber} and $\mathrm{Hilb}(2,\P{1}\times\P{1})$ \cite{pontoni}.

In \cite{mihai} Mihai studied a family of varieties, the odd symplectic flag manifolds, which have many features in common with the symplectic flag manifolds. These varieties are interesting at least for two reasons ; first, they are quasi-homogeneous, and secondly, since they have an action of the algebraic group $\Sp$ (the odd symplectic group), whose properties are closely related to those of $\mathrm{Sp}_{2n}$, they are expected to behave almost like homogeneous spaces and thus be relatively easy to deal with. The classical and quantum cohomology of symplectic Grassmannians has been described in \cite{BKT} and \cite{BKT2}, so one can ask whether it is possible to obtain similar results in the case of odd symplectic Grassmannians. 

Here we deal with the case of the odd symplectic Grassmannian of lines $\IGl$, although some of the results about the classical cohomology hold in a more general setting. In \ref{sec:embed.C} and \ref{sec:embed.A} we use the natural embeddings of $\IGl$ in the usual Grassmannian and in the symplectic Grassmannian to compute classical Pieri (see \ref{sec:pieri}) and Giambelli (see \ref{sec:giambelli}) formulas, as well as a presentation of the cohomology ring (see \ref{sec:presentation}). 

For the quantum cohomology the situation is more complicated. Since these varieties are not convex it is necessary to study the moduli spaces corresponding to invariants of degree one to show that they are smooth of the expected dimension. This is done in \ref{sec:mod}. Another difficulty is that since the group action is not transitive, an important transversality result, Kleiman's lemma \cite[Thm. 2]{kleiman} no longer holds. So it will not be possible to force two Schubert varieties to meet transversely by an adequate choice of the defining flags as was done for instance in \cite{coskun}. Hence the Gromov-Witten invariants associated to Schubert varieties are not always enumerative. To solve this problem we replace Schubert varieties by another family of subvarieties and we use a transversality result of Graber \cite{graber} suited for quasi-homogeneous spaces. In \ref{sec:qpieri} we obtain a quantum Pieri formula and a presentation of the quantum cohomology ring. Finally, in \ref{sec:dubrovin}, we check for odd symplectic Grassmannians of lines a conjecture of Dubrovin \cite[Conj. 4.2.2]{dubrovin} relating semisimplicity of the quantum cohomology and the existence of a full exceptional collection in the derived category.

Our results show that there are a many similarities with the symplectic case, since the classical and quantum Pieri formulas are almost the same in both cases. The Hasse diagrams are closely related as well (see \ref{sec:Hasse}). However, Poincar\'e duality is very different, since the Poincar\'e dual of a Schubert class is no longer always a single Schubert class (see \ref{sec:poincare}). Moreover, contrary to what we prove here, the small quantum cohomology ring of the symplectic Grassmannian of lines is not semisimple (see \cite{CP}), and it is not known whether the Dubrovin conjecture holds in this case.

I wish to thank Laurent Manivel for his help on this subject.


\section{Classical cohomology}

Let $2 \leq m \leq n$ be integers, $V$ be a $\C$-vector space of dimension $2n+1$ and $\omega$ be an antisymmetric form of maximal rank on $V$. We denote its kernel by $K$. The \emph{odd symplectic Grassmannian} is
\[
 \IG_\omega(m,V) := \left\{ \Sigma \in \G(m,V) \mid \Sigma \text{ is isotropic for } \omega \right\}.
\]
It has an action of the \emph{odd symplectic group} :
\[
 \mathrm{Sp}(V) := \left\{ g\in\mathrm{GL}(V) \mid \forall u,v \in V \; \omega(g u,g v) = \omega(u,v) \right\}.
\]
Up to isomorphism, $\IG_\omega(m,V)$ does not depend on the $(2n+1)$-dimensional vector space $V$ nor on the form $\omega$, so we may denote it by $\IG(m,2n+1)$. Similarly, from now on we denote $\mathrm{Sp}(V)$ by $\Sp$. We recall some basic facts from \cite[Prop. 4.1 and 4.3]{mihai} :

\begin{prop}
\label{prop:mihai.1}
 \begin{enumerate}
  \item The odd symplectic Grassmannian $\IG(m,2n+1)$ is a smooth subvariety of codimension $\frac{m(m-1)}{2}$ of the usual 
Grassmannian $\G(m,2n+1)$.
  \item Moreover, it has two orbits under the action of the odd symplectic group $\Sp$ :
\begin{itemize}
 \item the closed orbit $\O := \left\{ \Sigma \in \IG(m,2n+1) \mid \Sigma \supset K \right\}$, which is isomorphic to the symplectic Grassmannian
$\IG(m-1,2n)$ ;
 \item the open orbit $\left\{ \Sigma \in \IG(m,2n+1) \mid \Sigma \not \supset K \right\}$, which is isomorphic to the dual of the tautological 
bundle over the symplectic Grassmannian $\IG(m,2n)$.
\end{itemize}
\end{enumerate}
\end{prop}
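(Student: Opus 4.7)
The plan is to exhibit $\IG(m, 2n+1)$ as the zero locus of a natural vector bundle section on the ordinary Grassmannian, and then to describe the $\Sp$-orbits in terms of the position of $\Sigma$ relative to the radical $K$.

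For part (1), I would regard $\omega \in \Lambda^2 \dual{V}$ as inducing, by restriction to the tautological subbundle $\UU \subset V \otimes \OO_{\G(m,2n+1)}$, a global section $\sigma_\omega$ of the rank $m(m-1)/2$ bundle $\Lambda^2 \dual{\UU}$. By construction the zero locus of $\sigma_\omega$ is exactly $\IG(m,2n+1)$, so it is enough to prove that $\sigma_\omega$ is transverse to the zero section. At a point $[\Sigma]$, unwinding the identifications, the derivative is the map $\Hom{\Sigma}{V/\Sigma} \to \Lambda^2 \dual{\Sigma}$ sending $\phi$ to the antisymmetric form $(u,v) \mapsto \omega(\phi(u),v) + \omega(u,\phi(v))$, which is well defined because $\omega|_\Sigma = 0$. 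I would verify surjectivity by a short case analysis depending on whether $K \subset \Sigma$ or not, using the maximal rank of $\omega$ to produce a preimage for each antisymmetric form on $\Sigma$; this simultaneously yields smoothness and the announced codimension.

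For part (2), observe that $\Sp$, being the stabiliser of $\omega$, also preserves $K = \ker\omega$, so $\O$ and its complement are $\Sp$-invariant. On $\O$ the form $\omega$ descends to a non-degenerate symplectic form $\bar\omega$ on $V/K$, and $\Sigma \mapsto \Sigma/K$ identifies $\O$ with the set of $(m-1)$-dimensional isotropic subspaces of $(V/K, \bar\omega)$; the inverse $\Sigma' \mapsto \pi^{-1}(\Sigma')$ (where $\pi \colon V \to V/K$ is the quotient) shows that this is an isomorphism of algebraic varieties, giving $\O \simeq \IG(m-1, 2n)$. On the complement, since $\dim K = 1$, the condition $K \not\subset \Sigma$ forces $\Sigma \cap K = 0$, so $\pi|_\Sigma$ is an isomorphism onto an $m$-dimensional isotropic subspace $\Sigma'$ of $V/K$; recovering $\Sigma$ from $\Sigma'$ amounts to choosing a section $\Sigma' \to V$ of $\pi$, i.e. an element of $\Hom{\Sigma'}{K}$. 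After trivialising the line $K$, this identifies the open orbit with the total space of $\dual{\UU}$ on $\IG(m,2n)$. Transitivity of $\Sp$ on each piece follows from a Witt-type extension theorem for $\omega$, which extends any linear isometry between isotropic subspaces of $V$ (with prescribed behaviour on $K$) to a global element of $\Sp$.

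The main difficulty I foresee is the transversality computation in (1): when $K \subset \Sigma$ the form $\omega$ is more degenerate on subspaces of $V$ containing $\Sigma$, and one must track carefully the difference between $\Sigma^\perp$ and $\Sigma$ in order to hit every element of $\Lambda^2 \dual{\Sigma}$ in the derivative map. Everything else reduces to linear algebra and standard features of quotients by the radical $K$.
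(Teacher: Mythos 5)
The paper does not prove this proposition; it is quoted directly from Mihai (\cite[Prop.\ 4.1 and 4.3]{mihai}), so there is no in-paper argument to compare against. Your proposal is, on its own terms, a correct and essentially standard way to establish both claims, and in particular the approach via the zero locus of a section of $\Lambda^2\dual{\UU}$ is the one Mihai uses.

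On part (1): the description of the intrinsic derivative as $\phi \mapsto \left[(u,v)\mapsto\omega(\phi(u),v)+\omega(u,\phi(v))\right]$ is correct, and the surjectivity you flag as the main difficulty does in fact go through in both cases. The clean way to see it: let $\beta:V/\Sigma\to\dual{\Sigma}$ be the map $w\mapsto\omega(w,\cdot)|_\Sigma$. Its image is the annihilator of $\Sigma\cap K$ in $\dual{\Sigma}$, which is all of $\dual{\Sigma}$ when $K\not\subset\Sigma$ and the hyperplane $\mathrm{ann}(K)$ when $K\subset\Sigma$. The derivative is the antisymmetrization $\phi\mapsto\beta\circ\phi-(\beta\circ\phi)^{\mathsf t}$, so its image is the image of $\dual{\Sigma}\otimes\mathrm{Im}\,\beta\to\Lambda^2\dual{\Sigma}$, $a\otimes b\mapsto a\wedge b$. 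When $\mathrm{Im}\,\beta=\dual{\Sigma}$ this is obviously onto. When $\mathrm{Im}\,\beta=\mathrm{ann}(K)$ is a hyperplane, it is still onto: writing $\dual{\Sigma}=\C k^*\oplus\mathrm{ann}(K)$, the image contains both $\Lambda^2\mathrm{ann}(K)$ and $k^*\wedge\mathrm{ann}(K)$, which together span $\Lambda^2\dual{\Sigma}$. So the delicate case you anticipate does close, and smoothness together with the stated codimension follows.

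On part (2): the orbit description is correct, but one point deserves to be made explicit. The sections of $\pi:V\to V/K$ over a given isotropic $\Sigma'$ form a torsor under $\Hom{\Sigma'}{K}$, not a vector space, so what you produce directly is an affine bundle modeled on $\dual{\mathcal{S}}$ (after fixing a generator of $K$). To get the asserted isomorphism with the total space of $\dual{\mathcal{S}}$ one should additionally choose a splitting $V=V_0\oplus K$; since $K$ is the radical of $\omega$, the restriction $\omega|_{V_0}$ is nondegenerate, and $V_0$ provides the zero section. The resulting isomorphism of varieties is not canonical and not $\Sp$-equivariant, but that is all the proposition claims. The transitivity of $\Sp$ on each piece via a Witt-type extension argument is the right mechanism.
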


For us, a \emph{quasi-homogeneous space} will be an algebraic variety endowed with an action of an algebraic group with only finitely many orbits. Odd symplectic Grassmannians are examples of such spaces. 

\subsection{Schubert varieties}

A $\C$-vector space $V$ of dimension $2n+1$ endowed with an antisymmetric form of maximal rank $\omega$ can be embedded in a symplectic space $(\overline{V},\overline{\omega})$ of dimension $2n+2$ such that $\overline{\omega}\mid_V = \omega$. This construction gives rise to a natural embedding $\ii : \IG(m,2n+1) \hookrightarrow \IG(m,2n+2)$. It can be easily seen that $\ii$ identifies $\IG(m,2n+1)$ with a Schubert subvariety of $\IG(m,2n+2)$. Moreover this embedding enables us to obtain a description of the Schubert subvarieties of $\IG(m,2n+1)$. In \ref{sec:schub.even} we recall some facts about Schubert varieties in $\IG(m,2n)$, then in \ref{sec:schub.odd} we describe the Schubert varieties of $\IG(m,2n+1)$ and introduce an indexation using partitions. 

\subsubsection{Schubert varieties in the symplectic Grassmannian}
\label{sec:schub.even}

Here we recall the indexing conventions introduced in \cite[Def. 1.1]{BKT}. Two kinds of combinatorial objects can be used to index Schubert varieties of the symplectic Grassmannian $\IG(m,2n)$, $k$-strict partitions (with $k:=n-m$) and index sets :
\begin{defi}
\begin{enumerate}
 \item A \emph{$k$-strict partition} is a weakly decreasing sequence of integers $\lambda = (\lambda_1\geq\dots\geq\lambda_{m}\geq 0)$ such that $\lambda_j > k \Rightarrow \lambda_j > \lambda_{j+1}$. 
 \item An \emph{index set of length $m$} for the symplectic Grassmannian is a subset $P=(p_1<\dots<p_m) \subset \left[ 1 , 2n \right]$ with $m$ elements such that for all $1\leq i,j\leq m$ we have $p_i+p_j \neq 2n+1$.
\end{enumerate}
\end{defi}
Now if $F_{\bullet}$ is an \emph{isotropic flag} (i.e a complete flag such that $F_{n-i}^\perp = F_{n+i}$ for all $0\leq i\leq n$), to each admissible index set $P = (p_1,\dots,p_m)$ of length $m$  we can associate the \emph{Schubert cell}
\[
 X^{\circ}_P(F_\bullet) := \left\{ \Sigma \in \IG(m,2n)\mid\dim(\Sigma\cap F_{p_j})=j,\;\forall\; 1\leq j\leq m\right\}.
\]
Moreover there is a bijection between $k$-strict partitions $\lambda$ such that $\lambda_1\leq 2n-m$ and index sets $P\subset \left[1,2n\right]$ of length $m$, given by
\begin{align*}
 \lambda&\mapsto P=(p_1,\dots,p_{m}) \text{ where $p_j=n+k+1-\lambda_j+\#\left\{i<j\mid\lambda_i+\lambda_j\leq 2k+j-i\right\}$,} \\
 P&\mapsto\lambda=(\lambda_1,\dots,\lambda_{m}) \text{ where $\lambda_j=n+k+1-p_j+\#\left\{i<j\mid p_i+p_j>2n+1\right\}$}.
\end{align*}
The advantage of the representation by $k$-strict partitions is twofold : it mimics the indexation of Schubert classes of type A Grassmannians by 
partitions, and the codimension of the Schubert variety associated to a $k$-strict partition $\lambda$ is easily computed as $|\lambda|=\sum_{j=1}^m\lambda_j$. In the next paragraph we will describe a similar indexation for the odd symplectic Grassmannian.

\subsubsection{Schubert varieties in the odd symplectic Grassmannian}
\label{sec:schub.odd}

We now use Mihai's description of the odd symplectic Grassmannian as a Schubert subvariety of $\IG(m,2n+2)$ to define the Schubert varieties of the odd symplectic Grassmannian. We also introduce two indexations for them.

Schubert varieties of the odd symplectic Grassmannian will be defined with respect to an \emph{isotropic flag of $\C^{2n+1}$}, i.e a complete flag $F_\bullet$ which is the restriction of an isotropic flag $F_\bullet^+$ of $\C^{2n+2}$. Denote by $1^m$ the partition $\lambda^0$ such that $\lambda_1^0=\dots=\lambda^0_{m}=1$. It corresponds to the index set $P^0=(2n+2-m,\dots,2n+1)$. 

\begin{prop}
\label{prop:mihai.2}
The embedding $\ii :\IG(m,2n+1)\ra\IG(m,2n+2)$ identifies $\IG(m,2n+1)$ with the Schubert subvariety of $\IG(m,2n+2)$ associated to the $(n+1-m)$-strict partition $\lambda^0$ (or, equivalently, to the index set $P^0$). 
\end{prop}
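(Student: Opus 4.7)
The plan is to choose an isotropic flag of $\overline{V}$ whose codimension-one subspace is exactly $V$, and then to read off the Schubert variety $X_{P^0}$ directly from its defining incidence conditions.

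First I would identify the kernel $K = \ker\omega$ with the $\overline{\omega}$-orthogonal $V^\perp$ of $V$. Since $V$ is a hyperplane in $\overline{V}$ we have $V = L^\perp$ for some line $L$ (automatically $\overline{\omega}$-isotropic, as any line is for an antisymmetric form), and a direct check gives $\ker(\overline{\omega}|_V) = V \cap L^{\perp\perp} = V \cap L = L$, hence $L = K$. Now pick an isotropic flag $F^+_\bullet$ of $\overline{V}$ with $F^+_1 = K$. The defining relation $(F^+_{n+1-i})^\perp = F^+_{n+1+i}$ at $i = n$ then forces $F^+_{2n+1} = K^\perp = V$, so that $F^+_\bullet$ restricts to a complete isotropic flag of $V$ in the sense of \ref{sec:schub.odd}.

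Next I would identify the Schubert variety set-theoretically. With $P^0 = (p_1,\dots,p_m) = (2n+2-m,\dots,2n+1)$, the closure of the Schubert cell is
\[
X_{P^0}(F^+_\bullet) = \left\{\Sigma \in \IG(m,2n+2) \;\Big|\; \dim(\Sigma \cap F^+_{p_j}) \geq j,\; 1 \leq j \leq m\right\}.
\]
The condition at $j = m$ reads $\dim(\Sigma \cap F^+_{2n+1}) \geq m$, which is equivalent to $\Sigma \subset F^+_{2n+1} = V$. Conversely, once $\Sigma \subset V$ is $m$-dimensional, a dimension count inside the $(2n+1)$-dimensional space $V$ gives $\dim(\Sigma \cap F^+_{p_j}) \geq m + p_j - (2n+1) = j$ for each $j<m$, so the remaining inequalities are automatic. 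Hence set-theoretically $X_{P^0}(F^+_\bullet) = \{\Sigma \in \IG(m,2n+2) \mid \Sigma \subset V\}$, and since $\overline{\omega}|_V = \omega$ this is exactly the image of $\ii$.

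To promote this to a scheme-theoretic identification, one observes that $\ii$ is a closed embedding, that both sides are reduced (Schubert varieties being so), and that the codimensions in $\IG(m,2n+2)$ agree: $|\lambda^0| = m$ on the Schubert side, and $\dim \IG(m,2n+2) - \dim \IG(m,2n+1) = m(2n+2-m) - m(2n+1-m) = m$ on the other, using the codimension formula of \ref{prop:mihai.1}. The only step that requires care is the identification $K = V^\perp$, since this is what allows $V$ to sit at position $2n+1$ in an isotropic flag of $\overline{V}$; once this is in place the argument is purely bookkeeping with the index set $P^0$.
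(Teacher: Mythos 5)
Your proof is correct. The paper itself does not prove this proposition; it is cited from Mihai's work with the remark that the identification ``can be easily seen.'' Your argument supplies the details along the natural lines that the paper and Mihai implicitly have in mind: identify $K$ with $V^\perp$ (which is valid since lines are automatically $\overline\omega$-isotropic, so $V^\perp\subset V$), choose an isotropic flag with $F^+_1=K$ so that $F^+_{2n+1}=K^\perp=V$, and observe that the only non-automatic incidence condition defining $X_{P^0}$ is $\Sigma\subset V$. One small remark: the scheme-theoretic step is even more immediate than you make it sound --- since both $\ii(\IG(m,2n+1))$ and $X_{P^0}(F^+_\bullet)$ are reduced closed subschemes of $\IG(m,2n+2)$ with the same underlying set, they coincide, and the codimension check is merely a sanity check rather than a needed ingredient.
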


We define the Schubert varieties of $\IG(m,2n+1)$ as the subvarieties of $\IG(m,2n+1)$ of the form
\[
 X_P(F_\bullet) := \left\{ \Sigma \in \IG(m,2n+1) \mid \dim (\Sigma \cap F_{p_j}) \geq j \text{ for all $j$} \right\},
\]
where 
\begin{itemize}
 \item $P$ is an index set of length $m$ of $\left[1,2n+1\right]$, that is, a $m$-uple $P=(p_1\leq\dots\leq p_m)$ with $1\leq p_j\leq 2n+1$ for all $j$ and $p_i+p_j\neq 2n+3$ for all $i,j$ ;
 \item $F_\bullet$ is an isotropic flag of $\C^{2n+1}$.
\end{itemize}
These varieties coincide with the Schubert varieties of $\IG(m,2n+2)$ indexed by index sets $P$ of $\left[1,2n+2\right]$ such that $P\leq P^0$ (for the lexicographical order), hence Proposition \ref{prop:mihai.2} implies that they define a cellular decomposition on $\IG(m,2n+1)$.

Let us now describe another indexation of the Schubert varieties of $\IG(m,2n+1)$ using partitions. If $P$ is an index set of $\left[1,2n+1\right]$, we associate to it a $(n-m)$-strict $m$-uple of weakly decreasing integers $\lambda=(\lambda_1\geq\dots\geq\lambda_m\geq -1)$ defined by
\[
 \lambda_j=2n+2-m-p_j+\#\left\{i<j\mid p_i+p_j>2n+3\right\} \text{ for all $1\leq j\leq m$.}
\]
Conversely if $\lambda=(\lambda_1\geq\dots\geq\lambda_m\geq -1)$ is any $(n-m)$-strict $m$-uple of weakly decreasing integers such that $\lambda_1\leq 2n+1-m$ and $(\lambda_m=-1 \Rightarrow \lambda_1=2n+1-m)$, then the assignement
\[
 p_j=2n+2-m-\lambda_j+\#\left\{i<j\mid \lambda_i+\lambda_j\leq 2(n-m)+j-i\right\} \text{ for all $1\leq j\leq m$}
\]
defines an index set of $\left[1,2n+1\right]$. It is easy to check that with respect to this indexation convention, the Schubert variety $X_{\lambda}(F_{\bullet})$ has codimension $|\lambda|$ in $\IG(m,2n+1)$. 

\begin{rem}
 For the case of the \emph{odd symplectic Grassmannian of lines} $\IGl$, it follows that the indexing partitions can be either
 \begin{itemize}
  \item ``usual'' $(n-2)$-strict partitions $\lambda=(2n-1\geq\lambda_1\geq\lambda_2\geq 0)$ ;
  \item the ``partition'' $\lambda=(2n-1,-1)$ corresponding to the class of the closed orbit $\O$.
 \end{itemize}
\end{rem}

\subsection{Embedding in the symplectic Grassmannian}
\label{sec:embed.C}

Now we draw some consequences of the embedding of $\IG(2,2n+1)$ as a Schubert subvariety of a symplectic Grassmannian. Since we know the cohomology of $\IG(2,2n+2)$, describing the restriction map $\ii^*$ will give us information on the cohomology of $\IGl$. 

Let $F_\bullet$ be an isotropic flag, $Y_{a,b}(F_\bullet)$ a Schubert subvariety of $\IG(2,2 n+2)$ and $\upsilon_{a,b}$ the associated Schubert class, where $(a,b)$ is an $(n-2)$-strict partition. From Proposition \ref{prop:mihai.2}, we know that $\IGl$ is isomorphic to the Schubert subvariety $Y_{1,1}(E_\bullet)$ of $\IG(2,2n+2)$, where $E_\bullet$ is an isotropic flag which we may assume to be in general position with respect to $F_\bullet$. Then it follows that $Y_{a,b}(F_\bullet)$ and $Y_{1,1}(E_\bullet)$ meet transversally, hence we can compute the restriction $\ii^*\upsilon_{a,b}$ by computing the class of the intersection $Y_{a,b}\cap Y_{1,1}$ in $\IG(2,2 n+2)$ using the classical Pieri rules for $\IG(2,2n+2)$ \cite[Thm 1.1]{BKT} :
\begin{align*}
 \upsilon_{a,b}\cup\upsilon_{1,1}=\begin{cases}
                                    \upsilon_{a+1,b+1} & \text{if $a+b\neq 2n-2,2n-1$,} \\
				    \upsilon_{a+1,b+1}+\upsilon_{a+2,b} & \text{if $a+b=2n-2$ or $2n-1$.}
                                   \end{cases}
\end{align*}
\begin{rem}
 In the above formula, we should remove classes that are not indexed either by $(n-2)$-strict partitions $\lambda=(2n-1\geq\lambda_1\geq\lambda_2\geq 0)$ or by the special partition $\lambda=(2n-1,-1)$. We will adopt this convention throughout the rest of the text to simplify formulas.
\end{rem}

Denote by $\tau_\lambda$ the cohomology class associated to the Schubert variety $X_\lambda(F_\bullet)\subset\IGl$, where $\lambda$ is a $(n-2)$-strict partition and $F_\bullet$ is an isotropic flag of $\C^{2n+1}$. This class does not depend on the choice of the isotropic flag. 

Looking at the incidence conditions for the corresponding Schubert varieties, we prove that for each $\tau_{c,d}\in\H^*(\IGl,\Z)$, we have
\[
 \ii_*\tau_{c,d}=\upsilon_{c+1,d+1}.
\]
Using the projection formula $\ii_*(\alpha\cup\ii^*\beta)=\ii_*\alpha\cup\beta$, we deduce the
\begin{lem}[Restriction formula]
\label{lem:restriction}
 Let $\upsilon_{a,b}\in\H^*(\IG(2,2n+2),\Z)$ be a Schubert class. Then its restriction to the odd symplectic Grassmannian $\IGl$ is given by
 \begin{align*}
  \ii^*\upsilon_{a,b}=\begin{cases}
		       \tau_{a,b} & \text{if $a+b\neq 2n-2,2n-1$,} \\
		       \tau_{a,b}+\tau_{a+1,b-1} & \text{if $a+b=2n-2$ or $2n-1$.}
		      \end{cases}
 \end{align*}
\end{lem}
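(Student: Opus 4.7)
The plan is to apply the projection formula to reduce the computation of $\ii^*\upsilon_{a,b}$ to a Pieri product already at our disposal in $\H^*(\IG(2,2n+2),\Z)$. The first step is the simple observation that, specializing the pushforward formula $\ii_*\tau_{c,d}=\upsilon_{c+1,d+1}$ to $(c,d)=(0,0)$, we obtain $\ii_*(1_{\IGl})=\upsilon_{1,1}$, which is of course just a restatement of Proposition \ref{prop:mihai.2}.

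Applying the projection formula $\ii_*(\alpha\cup\ii^*\beta)=\ii_*\alpha\cup\beta$ with $\alpha=1$ and $\beta=\upsilon_{a,b}$ then gives
$$\ii_*\bigl(\ii^*\upsilon_{a,b}\bigr)=\upsilon_{1,1}\cup\upsilon_{a,b}.$$
The right-hand side is already known through the classical Pieri rule for $\IG(2,2n+2)$ recalled just above: it equals $\upsilon_{a+1,b+1}$ when $a+b\notin\{2n-2,2n-1\}$, and $\upsilon_{a+1,b+1}+\upsilon_{a+2,b}$ otherwise.

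To finish, I read the pushforward formula $\ii_*\tau_{c,d}=\upsilon_{c+1,d+1}$ backwards: $\upsilon_{a+1,b+1}=\ii_*\tau_{a,b}$ and $\upsilon_{a+2,b}=\ii_*\tau_{a+1,b-1}$. Because $\ii_*$ sends the Schubert basis of $\IGl$ injectively onto a subset of the Schubert basis of $\IG(2,2n+2)$, it is injective on the $\Z$-linear span of Schubert classes, so the expansion of $\ii^*\upsilon_{a,b}$ in the basis $\{\tau_\lambda\}$ can be read off directly from the two cases above, yielding the two branches of the stated formula.

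The main obstacle is really just the bookkeeping around boundary partitions: one must verify that whenever the formal outcome of the Pieri rule produces a class indexed by a partition that violates the admissibility conditions for $\IGl$ (either the $(n-2)$-strict condition or the exceptional $\lambda=(2n-1,-1)$ requirement), the corresponding class on the $\IG(2,2n+2)$ side is likewise discarded by the convention fixed in the preceding remark. Once this consistency is established, the correspondence $\ii_*\tau_{c,d}=\upsilon_{c+1,d+1}$ commutes with the removal of inadmissible terms, and the restriction formula drops out with no further computation.
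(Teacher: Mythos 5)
Your proposal is correct and follows the same route as the paper: the paper likewise identifies $\ii_*\tau_{0,0}=\upsilon_{1,1}$ (stated via the transversality of $Y_{a,b}$ and $Y_{1,1}\cong\IGl$, which is just the geometric form of the projection formula with $\alpha=1$), invokes the Pieri rule for $\IG(2,2n+2)$, and then reads off $\ii^*\upsilon_{a,b}$ from the pushforward formula $\ii_*\tau_{c,d}=\upsilon_{c+1,d+1}$ by injectivity on Schubert classes. Your remark about the inadmissible boundary terms being discarded compatibly on both sides is the needed bookkeeping, and it checks out.
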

In particular we notice that $\ii^*$ is surjective and has kernel generated by the class $\upsilon_{2n}$. 
So the classical cohomology of $\IGl$ is entirely determined by the classical cohomology of $\IG\left(2,2 n+2\right)$. 

\subsection{Poincar\'e duality}
\label{sec:poincare}

If $X$ is a smooth algebraic variety and $(\gamma_i)$ a homogeneous basis of its cohomology ring, we denote by $(\dual{\gamma}_i)$ the corresponding Poincaré dual basis. For homogeneous spaces and for odd symplectic Grassmannians, the basis $(\gamma_i)$ we consider is the basis of Schubert classes. Here we compute Poincar\'e duality for $\IGl$.

If $\alpha=\upsilon_{a,b}$ is a Schubert class such that $b\geq 1$ or $(a,b)=(2n,0)$, then there exists a unique class $\gamma$ in $\IGl$ such that $\ii_*\gamma = \alpha$. We denote it by $\alpha_{-}$. 
We first prove the 
\begin{lem}
\label{lem:poincare}
 Let $\alpha=\upsilon_{a,b}$ be a cohomology class in $\IG(2,2n+2)$ such that $b\geq 1$ or $(a,b)=(2n,0)$. Then $\ii^*\dual{\alpha}=\dual{\alpha_{-}}$. 
\end{lem}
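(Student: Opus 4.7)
The plan is to characterize $\ii^*\dual{\alpha}$ by its pairings against the Schubert basis of $\IGl$ and then invoke uniqueness of the Poincaré dual basis. Recall that $\dual{\alpha_-}$ is defined as the unique class in $\H^*(\IGl,\Z)$ satisfying $\int_{\IGl} \tau_{c,d} \cup \dual{\alpha_-} = \delta_{\tau_{c,d},\,\alpha_-}$ as $\tau_{c,d}$ runs over all Schubert classes of $\IGl$ (including the special class $\tau_{2n-1,-1}$ associated with the closed orbit $\O$). So it would be enough to verify this same pairing identity with $\ii^*\dual{\alpha}$ in place of $\dual{\alpha_-}$, and then conclude by uniqueness.

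The central step would be an application of the projection formula combined with the pushforward formula $\ii_*\tau_{c,d} = \upsilon_{c+1,d+1}$ stated just before the lemma. For an arbitrary Schubert class $\tau_{c,d}$ of $\IGl$, the projection formula gives
\[
\int_{\IGl} \tau_{c,d} \cup \ii^*\dual{\alpha} \;=\; \int_{\IG(2,2n+2)} \ii_*\tau_{c,d} \cup \dual{\alpha} \;=\; \int_{\IG(2,2n+2)} \upsilon_{c+1,d+1} \cup \dual{\upsilon_{a,b}},
\]
which by Poincaré duality in $\IG(2,2n+2)$ equals $\delta_{(c+1,d+1),\,(a,b)}$. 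Under the hypothesis of the lemma, $(a-1,b-1)$ is a valid index with $\alpha_- = \tau_{a-1,b-1}$ (either in the ordinary regime $b \geq 1$, or in the boundary case $(a,b)=(2n,0)$, where one recovers the special partition $(2n-1,-1)$), so this Kronecker delta is precisely $\delta_{\tau_{c,d},\,\alpha_-}$.

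Since the required identity then holds against every element of the Schubert basis, the uniqueness of the Poincaré dual basis would yield $\ii^*\dual{\alpha} = \dual{\alpha_-}$. I do not foresee any serious obstacle: the only subtlety is to make sure the pushforward formula applies to every Schubert class of $\IGl$, including the special class $\tau_{2n-1,-1}$, but that case matches exactly the exceptional clause $(a,b)=(2n,0)$ in the hypothesis, so the argument goes through uniformly without a separate case split.
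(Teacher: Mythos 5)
Your proof is correct and takes essentially the same route as the paper: both characterize the class via the pairing identity, using the projection formula, the pushforward rule $\ii_*\tau_{c,d}=\upsilon_{c+1,d+1}$, and Poincar\'e duality in $\IG(2,2n+2)$. Your formulation (pair $\ii^*\dual{\alpha}$ directly against the Schubert basis of $\IGl$) is a slightly more streamlined phrasing of the paper's argument, which instead expands $\ii^*\dual{\beta}$ in the dual basis and extracts the coefficient of $\dual{\alpha_-}$.
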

\begin{proof}[Proof of the lemma]
By definition of Poincar\'e duality, if $\alpha$ and $\beta$ are two cohomology classes in $\IG(2,2n+2)$, then 
\[
 \int_{\IG(2,2 n+2)}\alpha\cup\dual{\beta}=\delta_{\alpha,\beta},
\]
where $\delta$ is the Kronecker symbol.
So 
\begin{equation}
\label{eq:poincare}
 \int_{\IG(2,2n+2)}(\ii_*\alpha_{-})\cup\dual{\beta}=\delta_{\alpha,\beta}=
 \int_{\IG(2,2n+2)}\ii_*(\alpha_{-}\cup\ii^*\dual{\beta}).
\end{equation}
Expressing $\ii^*\dual{\beta}$ on the dual basis in $\IGl$, we get $\ii^*\dual{\beta}=\sum_\gamma x_{\beta,\gamma}\dual{\gamma}$. 
Hence
\[
 \delta_{\alpha,\beta}=\sum_\gamma x_{\beta,\gamma}\int_{\IG(2,2n+2)}\ii_*(\alpha_{-}\cup\dual{\gamma})=
\sum_\gamma x_{\beta,\gamma}\delta_{\alpha_{-},\gamma}.
\]
So $x_{\beta,\alpha_{-}}=\delta_{\alpha,\beta}$, and the result follows.
\end{proof}
Finally, Poincar\'e duality in $\IGl$ takes the following form :
\begin{prop}[Poincar\'e duality]
\label{prop:poincare}
 \begin{align*}
 \dual{\tau_{a,b}}=\begin{cases}
        \tau_{2n-1-b,2n-2-a}  & \text{if $a+b<2n-2$,} \\
	\tau_{2n-2-b,2n-1-a}+\tau_{2n-1-b,2n-2-a} & \text{if $a+b=2n-2$ or $2n-1$,} \\
        \tau_{2n-2-b,2n-1-a} & \text{if $a+b>2n-1$.}
     \end{cases}
\end{align*}
\end{prop}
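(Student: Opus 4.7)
The plan is to deduce Poincaré duality in $\IGl$ from the known Poincaré duality in the ambient symplectic Grassmannian $\IG(2,2n+2)$, combining Lemma \ref{lem:poincare} with the restriction formula of Lemma \ref{lem:restriction}. By the identification $\ii_*\tau_{c,d}=\upsilon_{c+1,d+1}$ established just before Lemma \ref{lem:restriction}, each Schubert class $\tau_{a,b}$ of $\IGl$ is equal to $(\upsilon_{a+1,b+1})_-$ in the generic case, and $\tau_{2n-1,-1}=(\upsilon_{2n,0})_-$ for the exceptional class. In both situations the hypothesis of Lemma \ref{lem:poincare} is satisfied (since $b+1\geq 1$, respectively $(2n,0)$), so
\[
 \dual{\tau_{a,b}}=\ii^*\dual{\upsilon_{a+1,b+1}},\qquad \dual{\tau_{2n-1,-1}}=\ii^*\dual{\upsilon_{2n,0}}.
\]

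The first step is to determine $\dual{\upsilon_{a,b}}$ in $\IG(2,2n+2)$. Using the $k$-strict partition / index-set dictionary of Section \ref{sec:schub.even} with $k=n-1$, and the fact that Poincaré duality in the symplectic Grassmannian sends the index set $(p_1,p_2)$ to $(2n+3-p_2,\,2n+3-p_1)$, translating back to partitions yields
\[
 \dual{\upsilon_{a,b}}=
 \begin{cases}
  \upsilon_{2n-b,\,2n-1-a} & \text{if } a+b\leq 2n-1,\\
  \upsilon_{2n-1-b,\,2n-a} & \text{if } a+b\geq 2n.
 \end{cases}
\]
Substituting $(a,b)\mapsto(a+1,b+1)$ and applying Lemma \ref{lem:restriction} to the result then yields the statement, after a case split on the value of $a+b$.

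The main obstacle is the bookkeeping of the boundary cases. Applied to the shifted class $\upsilon_{a+1,b+1}$, the two cases above switch between $a+b=2n-3$ and $a+b=2n-2$, whereas the two cases of the restriction formula switch when the indices of the resulting class sum to a value in $\{2n-2,2n-1\}$, i.e.\ when $a+b\in\{2n-2,2n-1\}$. Carrying out the four-way case analysis ($a+b\leq 2n-3$, $a+b=2n-2$, $a+b=2n-1$, $a+b\geq 2n$) shows that the two middle values both produce the two-term expression in the proposition, while the two outer ranges produce a single term each. Finally, the exceptional class $\tau_{2n-1,-1}$ falls in the range $a+b=2n-2$: the recipe applied to $\upsilon_{2n,0}$ produces $\tau_{2n-1,0}+\tau_{2n,-1}$, and discarding the inadmissible $\tau_{2n,-1}$ (per the convention of the earlier remark) recovers exactly the middle-case formula restricted to its single admissible term $\tau_{2n-1,0}$.
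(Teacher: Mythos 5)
Your proposal is correct and follows essentially the same route as the paper: both deduce the result from Lemmas \ref{lem:restriction} and \ref{lem:poincare} together with Poincar\'e duality on $\IG(2,2n+2)$, the only cosmetic difference being that you work with $\ii^*\dual{\upsilon_{a+1,b+1}}$ directly while the paper expresses the same quantity as $(\dual{\upsilon_{a+1,b+1}}\cup\upsilon_{1,1})_-$ via the projection formula, which is exactly what Lemma \ref{lem:restriction} encodes. Your four-way case analysis and the treatment of the exceptional class $\tau_{2n-1,-1}$ are all accurate.
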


\begin{proof}
 We will derive this result from Poincar\'e duality on $\IG(2,2n+2)$ using Lemmas \ref{lem:restriction} and \ref{lem:poincare}. Indeed, we prove with the projection formula that if $\alpha$ is a class in $\IG(2,2n+2)$, then $\dual{\alpha_{-}}=(\dual{\alpha}\cup\upsilon_{1,1})_{-}$. Then using the Poincar\'e duality formula in $\IG(2,2n+2)$ proved in \cite[§ 4.1]{BKT}, an easy calculation gives the result.
\end{proof}

\begin{rem}
 This result is very different from what we get for the usual Grassmannians or even the symplectic or orthogonal ones. 
Indeed, the basis of Schubert classes is not self-dual. This fact will have many consequences ; in particular, the Hasse diagram
of $\IGl$ (see Figure \ref{fig:hasse.2.7}) will be much less symmetric that the Hasse diagram of, say, $\IG(2,2n+2)$ (see 
Figure \ref{fig:hasse.2.6}).
\end{rem}

\subsection{Pieri formula}
\label{sec:pieri}

To compute the cup product of two cohomology classes in $\IGl$, we need two ingredients : a \emph{Pieri formula} describing the cup product
of any Schubert class with a special class (that is, one of the classes $\tau_{1}$ or $\tau_{1,1}$), and a \emph{Giambelli formula} decomposing any
Schubert class as a polynomial in $\tau_1$ and $\tau_{1,1}$. In this paragraph we describe the Pieri formula as well as an alternative rule for
multiplying Schubert classes and classes of the form $\tau_p$ with $0\leq p\leq 2n-1$ or $\tau_{2n-1,-1}$.

We start by expressing cohomology classes in $\IGl$ in terms of cohomology classes in $\IG(2,2n+2)$ using Lemma \ref{lem:restriction} :
 \begin{align*}
 \tau_{c,d}=\begin{cases}
        \mathbf{i}^*\upsilon_{c,d} & \text{if $c+d\neq 2n-2,2n-1$,} \\
        \sum_{j=0}^{c-n}(-1)^{c-n-j}\mathbf{i}^*\upsilon_{n-1+j,n-1-j}  & \text{if $c+d=2n-2$,} \\
	\sum_{j=c-n}^{n-1}(-1)^{j-c+n}\mathbf{i}^*\upsilon_{n+j,n-1-j}  & \text{if $c+d=2n-1$}.
     \end{cases}
\end{align*}
Now combining this with the Pieri rule in $\IG(2,2n+2)$, we can prove a Pieri rule for $\IGl$~:
\begin{prop}[Pieri formula]
\label{prop:pieri.1}
 \begin{align*}
  \tau_{a,b}\cup\tau_1 & =\begin{cases}
                        \tau_{a+1,b}+\tau_{a,b+1} & \text{if $a+b\neq 2n-3$,} \\
			\tau_{a,b+1}+2\tau_{a+1,b}+\tau_{a+2,b-1} & \text{if $a+b=2n-3$.}
                        \end{cases} \\
  \tau_{a,b}\cup\tau_{1,1}& =\begin{cases}
                        \tau_{a+1,b+1} & \text{if $a+b\neq 2n-4,2n-3$,} \\
			\tau_{a+1,b+1}+\tau_{a+2,b} & \text{if $a+b=2n-4$ or $2n-3$.}
                        \end{cases}
 \end{align*}
\end{prop}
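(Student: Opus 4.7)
The plan is to derive both Pieri formulas by lifting the computation to $\IG(2,2n+2)$ through the embedding $\ii$, applying the classical Pieri rules there \cite[Thm.~1.1]{BKT}, and pushing the answer back down through Lemma \ref{lem:restriction}.

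First, Lemma \ref{lem:restriction} gives $\tau_1=\ii^*\upsilon_1$ and $\tau_{1,1}=\ii^*\upsilon_{1,1}$, and more generally $\tau_{a,b}=\ii^*\upsilon_{a,b}$ whenever $a+b\notin\{2n-2,2n-1\}$, so in that range
$$\tau_{a,b}\cup\tau_1=\ii^*\bigl(\upsilon_{a,b}\cup\upsilon_1\bigr),\qquad \tau_{a,b}\cup\tau_{1,1}=\ii^*\bigl(\upsilon_{a,b}\cup\upsilon_{1,1}\bigr).$$
For $a+b\in\{2n-2,2n-1\}$ I would instead use the alternating-sum expansions of $\tau_{c,d}$ in terms of $\ii^*\upsilon_{e,f}$ displayed just before the proposition, and then proceed term-by-term: apply $\IG(2,2n+2)$-Pieri to each summand and re-expand every output $\ii^*\upsilon_{e,f}$ via Lemma \ref{lem:restriction}.

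The key combinatorial observation is the doubling $2\tau_{a+1,b}$ in the $\tau_1$ formula when $a+b=2n-3$. In that regime $\tau_{a,b}=\ii^*\upsilon_{a,b}$, and the Pieri rule of $\IG(2,2n+2)$ yields $\upsilon_{a+1,b}+\upsilon_{a,b+1}$; both summands have index sum $2n-2$, so each splits under $\ii^*$ into two terms by Lemma \ref{lem:restriction}, namely $\tau_{a+1,b}+\tau_{a+2,b-1}$ and $\tau_{a,b+1}+\tau_{a+1,b}$, whose total is exactly $\tau_{a,b+1}+2\tau_{a+1,b}+\tau_{a+2,b-1}$. The analogous extra $\tau_{a+2,b}$ in the $\tau_{1,1}$ formula for $a+b\in\{2n-4,2n-3\}$ arises purely from the restriction step: the $\IG(2,2n+2)$-Pieri output $\upsilon_{a+1,b+1}$ has index sum in the overlap range of Lemma \ref{lem:restriction}, and therefore restricts to $\tau_{a+1,b+1}+\tau_{a+2,b}$.

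The main obstacle is the bookkeeping in the intermediate ranges $a+b\in\{2n-2,2n-1\}$, where $\tau_{a,b}$ itself is an alternating sum of several $\ii^*\upsilon_{e,f}$. After applying $\IG(2,2n+2)$-Pieri to every summand and re-expanding each resulting $\ii^*\upsilon$ via Lemma \ref{lem:restriction}, one must check that the many intermediate terms telescope to leave only $\tau_{a+1,b}+\tau_{a,b+1}$ (respectively $\tau_{a+1,b+1}$), as the claimed formulas predict. The conventions from the remark following Lemma \ref{lem:restriction}, which discard classes indexed by non-admissible partitions, are essential to produce these cancellations.
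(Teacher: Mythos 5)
Your proposal follows exactly the route the paper takes: express $\tau_{a,b}$ (and the Pieri outputs) in terms of $\ii^*\upsilon_{e,f}$ via Lemma \ref{lem:restriction}, apply the symplectic Pieri rule on $\IG(2,2n+2)$, and re-expand; you also correctly identify the two distinct sources of the extra terms (the restriction splitting for the multiplicity $2$ in the $\tau_1$ rule and for the $\tau_{a+2,b}$ in the $\tau_{1,1}$ rule) and the role of discarding non-admissible partitions in the telescoping for $a+b\in\{2n-2,2n-1\}$. No gaps; this is essentially the proof the paper sketches.
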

We may also state a rule for multiplying by the Chern classes of the quotient bundle :
\begin{align*}
 c_p(\mathcal{Q})=\begin{cases}
                   \tau_p & \text{if $0\leq p\leq 2n-1$ and $p\neq 2n-2$,} \\
		   \tau_{2n-2}+\tau_{2n-1,-1} & \text{if $p=2n-2$}.
                  \end{cases}
\end{align*}
We prove in the same way as Proposition \ref{prop:pieri.1} the
\begin{prop}[another Pieri formula]
\label{prop:pieri.2}
  \begin{align*}
 \tau_{a,b}\cup\tau_p=\begin{cases}
        \displaystyle\sum_{\substack{(a+1,b+1) \ra (c+1,d+1) \\ c+d=a+b+p \\ d\geq 0 \text{ or } c=2n-1}} 2^{N\left((a+1,b+1),(c+1,d+1) \right)} \tau_{c,d},  \\ 
	\hspace{3.8cm} \text{if $p\neq 2n-2$ or $(a+b\neq 2n-1$ and $(a,b)\neq (2n-1,-1))$,} \\
        (-1)^a\tau_{2n-1,2n-2}  \hspace{1.3cm} \text{if $p=2n-2$, $a+b=2n-1$ and $b\neq 0$,} \\
	0 \hspace{3.6cm} \text{if $p=2n-2$ and $((a,b)=(2n-1,-1)$ or $(2n-1,0))$},
     \end{cases}
\end{align*}
 where the relation $\lambda \ra \mu$ and the integer $N(\lambda,\mu)$ are defined in \cite[Def 1.3]{BKT}.
  \begin{align*}
 \hspace{-5cm}\tau_{a,b}\cup\tau_{2n-1,-1}=\begin{cases}
	(-1)^{a-1}\tau_{2n-1,2n-2} & \text{if $a+b=2n-1$,} \\
	\tau_{2n-1,a-1} & \text{if $b=0$ and $a\neq  2n-2$,} \\
	\tau_{2n-1,2n-3} & \text{if $(a,b)=(2n-1,-1)$,} \\
	0 & \text{else}.
     \end{cases}
\end{align*}
\end{prop}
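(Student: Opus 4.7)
The approach mirrors that of Proposition \ref{prop:pieri.1}: we transport the product to $\IG(2,2n+2)$ via the embedding $\ii$, apply the BKT Pieri rule \cite[Thm 1.1]{BKT} there, and translate the result back to $\IGl$ through Lemma \ref{lem:restriction}. The basic mechanism is the projection formula together with the identity $\ii_*\tau_{a,b}=\upsilon_{a+1,b+1}$ (and $\ii_*\tau_{2n-1,-1}=\upsilon_{2n,0}$), which allow us to compute $\ii_*(\tau_{a,b}\cup\beta)$ whenever $\beta$ is a restriction $\ii^*\alpha$. Since $\ii_*$ is injective on the span of the $\upsilon_{c,d}$ with $d\geq 1$ or $(c,d)=(2n,0)$, the left-hand factor is determined by its pushforward.

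For $p\neq 2n-2$ we have $\tau_p=\ii^*\upsilon_p$, so
\[ \ii_*(\tau_{a,b}\cup\tau_p)=\ii_*\tau_{a,b}\cup\upsilon_p=\upsilon_{a+1,b+1}\cup\upsilon_p, \]
and the BKT Pieri formula expresses the right-hand side as $\sum 2^{N((a+1,b+1),(c+1,d+1))}\upsilon_{c+1,d+1}$, summed over $(a+1,b+1)\to(c+1,d+1)$ with $c+d=a+b+p$. Reading each summand as $\ii_*\tau_{c,d}$ produces the formula, and the condition $d\geq 0$ or $c=2n-1$ simply records when $\upsilon_{c+1,d+1}$ lies in the image of $\ii_*$.

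The same method handles multiplication by $\tau_{2n-1,-1}$: we compute $\upsilon_{a+1,b+1}\cup\upsilon_{2n,0}$ in $\IG(2,2n+2)$. The second factor $\upsilon_{2n,0}$ is very restrictive in the $\to$ relation, producing at most one non-zero term, and an inspection of the BKT rule for each range of $(a,b)$ yields the four cases in the statement. For $\tau_p$ with $p=2n-2$, we use $\ii^*\upsilon_{2n-2}=\tau_{2n-2}+\tau_{2n-1,-1}$, i.e.\ $\tau_{2n-2}=\ii^*\upsilon_{2n-2}-\tau_{2n-1,-1}$, which gives
\[ \tau_{a,b}\cup\tau_{2n-2}=\ii^*\bigl(\upsilon_{a+1,b+1}\cup\upsilon_{2n-2}\bigr)-\tau_{a,b}\cup\tau_{2n-1,-1}, \]
reducing the computation to the two previous cases. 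The vanishing $(a,b)\in\{(2n-1,-1),(2n-1,0)\}$ is transparent, since the BKT sum contains no admissible partition of the right codimension.

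The main obstacle is the boundary case $a+b=2n-1$, $p=2n-2$: both input and output classes sit on the strip where $\ii^*$ is not an isomorphism, and the alternating expression of $\tau_{c,d}$ in terms of the $\ii^*\upsilon_{\bullet,\bullet}$ must be combined with the BKT coefficients $2^{N(\lambda,\mu)}$. This forces a telescoping cancellation that collapses what looks like a sum over many Schubert classes down to the single term $(-1)^a\tau_{2n-1,2n-2}$, and tracking the sign through the alternating sum is the only delicate point of the argument.
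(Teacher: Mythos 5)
Your plan for the generic case $\tau_p$ with $p\neq 2n-2$ is sound and matches the paper's method in spirit: push $\tau_{a,b}\cup\tau_p$ forward via the projection formula, apply the Pieri rule in $\IG(2,2n+2)$, and recover the $\tau_{c,d}$ coefficients from the $\upsilon_{c+1,d+1}$ ones using the injectivity of $\ii_*$ on Schubert classes.

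However, your treatment of $\tau_{2n-1,-1}$ and $\tau_{2n-2}$ contains a systematic error. The projection formula $\ii_*(\tau_{a,b}\cup\ii^*\alpha)=\ii_*\tau_{a,b}\cup\alpha$ requires the \emph{other} factor to be a pullback, and $\tau_{2n-1,-1}$ is not of the form $\ii^*\upsilon_{c,d}$: from Lemma \ref{lem:restriction} one sees that no single Schubert class restricts to it, and indeed $\ii_*\tau_{2n-1,-1}=\upsilon_{2n}$ while $\ii^*\upsilon_{2n}=0$. Consequently $\ii_*(\tau_{a,b}\cup\tau_{2n-1,-1})$ is \emph{not} $\upsilon_{a+1,b+1}\cup\upsilon_{2n,0}$. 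Taking $(a,b)=(2n-1,-1)$ gives a counterexample: $\upsilon_{2n}^2=0$ in $\IG(2,2n+2)$ for degree reasons, yet the proposition asserts $\tau_{2n-1,-1}^2=\tau_{2n-1,2n-3}$, whose pushforward $\upsilon_{2n,2n-2}$ is nonzero. A correct use of the projection formula puts $\tau_{a,b}$ (when it equals $\ii^*\upsilon_{a,b}$) in the pullback slot, giving $\ii_*(\tau_{a,b}\cup\tau_{2n-1,-1})=\upsilon_{2n,0}\cup\upsilon_{a,b}$, with indices $(a,b)$ and not $(a+1,b+1)$. The same confusion appears in your equation $\tau_{a,b}\cup\ii^*\upsilon_{2n-2}=\ii^*(\upsilon_{a+1,b+1}\cup\upsilon_{2n-2})$: since $\ii^*$ is a ring homomorphism, the left-hand side equals $\ii^*(\upsilon_{a,b}\cup\upsilon_{2n-2})$ when $\tau_{a,b}=\ii^*\upsilon_{a,b}$, again with unshifted indices. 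In short, you are conflating the class $\upsilon_{a+1,b+1}$ that $\tau_{a,b}$ pushes forward to with the class $\upsilon_{a,b}$ that pulls back to $\tau_{a,b}$; each plays its own role and they cannot be interchanged. Moreover, for the remaining boundary cases $a+b\in\{2n-2,2n-1\}$ or $(a,b)=(2n-1,-1)$, where $\tau_{a,b}$ is itself not a pullback of a single Schubert class, you cannot avoid the alternating-sum expressions of $\tau_{a,b}$ in terms of the $\ii^*\upsilon_{c,d}$ recorded at the start of \ref{sec:pieri}, which is the point of departure of the paper's proof of Proposition \ref{prop:pieri.1} and hence of this proposition as well.
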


Notice that contrary to the symplectic case (and to the case of other homogeneous spaces) we sometimes get negative coefficients for the second 
Pieri rule. It is a consequence of the fact that we only have a quasi-homogeneous space, so it is not always possible to find representatives of 
the two Schubert varieties that intersect transversally. So even in degree $0$ Gromov-Witten invariants associated to Schubert classes are not always enumerative, contrary to the case of homogeneous spaces. That is why we have to outline conditions in \ref{sec:trans} to recover enumerativity for some invariants.

\subsection{The Hasse diagram of $\IGl$}
\label{sec:Hasse}

The Pieri rule from Proposition \ref{prop:pieri.1} enables us in particular to compute the multiplication by the hyperplane class $\tau_1$. The 
corresponding graph is called the \emph{Hasse diagram} of $\IGl$. More precisely, the Hasse diagram of $\IGl$ is an oriented graph with multiplicity such that :
\begin{itemize}
 \item its vertices are the Schubert classes of $\IGl$ ;
 \item two vertices $\tau_{a,b}$ and $\tau_{c,d}$ are related by an arrow of multiplicity $r$ if $\tau_{c,d}$ appears with multiplicity $r$ in the product $\tau_{a,b}\cup\tau_1$.
\end{itemize}
For instance see Figure \ref{fig:hasse.2.7} for the Hasse diagram of $\IG(2,7)$. Arrows are going from left to right.
\begin{figure}[h!]
 \centering
 \begin{tikzpicture}[scale=1.5]
\tikzstyle{every node}=[draw,circle,fill=white,minimum size=6pt,inner sep=0pt]

\draw[red] (0,0) node (0) [label=below:$\tau_{\emptyset}$] {}
        -- (1,0) node (1) [label=below:$\tau_1$] {}
	-- (2,0.5) node (11) [label=above:$\tau_{1,1}$] {}
	-- (3,0.5) node (21) [label=above:$\tau_{2,1}$] {}
	-- (4,0) node (4) [label=above:$\tau_{4}$] {}
	-- (5,0) node (41) [label=below:$\tau_{4,1}$] {}
	-- (6,0.5) node (42) [label=above:$\tau_{4,2}$] {}
	-- (7,0.5) node (43) [label=above:$\tau_{4,3}$] {};
\draw[red] (1)
        -- (2,-0.5) node (2) [label=below:$\tau_{2}$] {}
        -- (3,-0.5) node (3) [label=below:$\tau_{3}$] {}
	-- (4,1) node (31) [label=above:$\tau_{3,1}$] {}
	-- (5,1) node (32) [label=above:$\tau_{3,2}$] {}
	-- (42);
\draw[red]
	(2) -- (21)
	(31) -- (41);
\draw[red,double]
	(21) --(31)
	(3) -- (4);
\node[draw=none,text=red] at (2,1.3) {$\mathrm{IG}(2,6)$};

\draw[blue,label distance=-5mm] (4,-1) node (5-1) [label=below:{$\mathbb{O}=\tau_{5,-1}$}] {};
\draw[blue] (5-1)
       -- (5,-1) node (5) [label=below:$\tau_{5}$] {}
        -- (6,-0.5) node (51) [label=below:$\tau_{5,1}$] {}
        -- (7,-0.5) node (52) [label=below:$\tau_{5,2}$] {}
        -- (8,0) node (53) [label=below:$\tau_{5,3}$] {}
        -- (9,0) node (54) [label=below:$\tau_{5,4}$] {};
\draw 
	(3) -- (5-1)
	(4) -- (5)
	(41) -- (51)
	(42) -- (52)
	(43) -- (53);
\node[draw=none,text=blue] at (6,-1.3) {$\mathrm{IG}(1,6)$};

\end{tikzpicture}

 \caption{Hasse diagram of $\IG(2,7)$}
 \label{fig:hasse.2.7}
\end{figure}
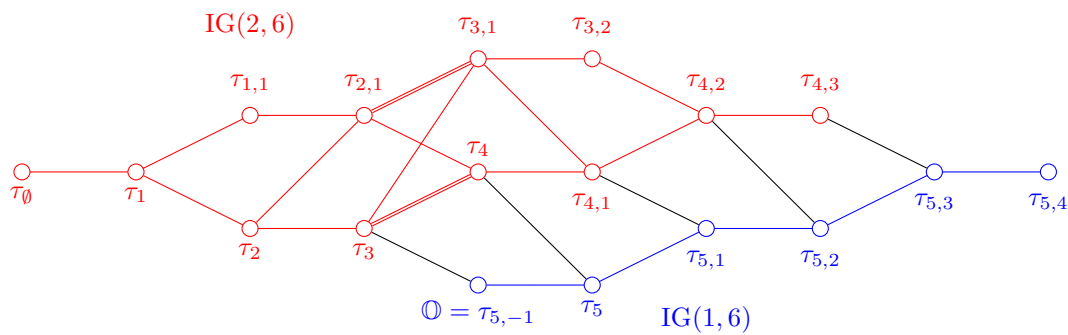

As a comparison, see also the Hasse diagram of the symplectic Grassmannian $\IG(2,6)$ in Figure \ref{fig:hasse.2.6}, and of 
$\IG(2,8)$ in Figure \ref{fig:hasse.2.8}.
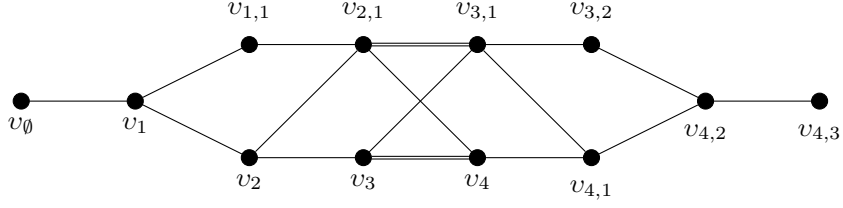
\begin{figure}
 \centering
 \begin{tikzpicture}[scale=1.5]
 
\tikzstyle{every node}=[draw,circle,fill=black,minimum size=6pt,inner sep=0pt]

\draw (0,0) node (0) [label=below:$\upsilon_{\emptyset}$] {}
        -- (1,0) node (1) [label=below:$\upsilon_1$] {}
        -- (2,-0.5) node (2) [label=below:$\upsilon_{2}$] {}
        -- (3,-0.5) node (3) [label=below:$\upsilon_{3}$] {}
        -- (4,0.5) node (31) [label=above:$\upsilon_{3,1}$] {}
        -- (5,0.5) node (32) [label=above:$\upsilon_{3,2}$] {}
        -- (6,0) node (42) [label=below:$\upsilon_{4,2}$] {}
        -- (7,0) node (43) [label=below:$\upsilon_{4,3}$] {};
\draw (1) 
	-- (2,0.5) node (11) [label=above:$\upsilon_{1,1}$] {}
	-- (3,0.5) node (21) [label=above:$\upsilon_{2,1}$] {}
	-- (4,-0.5) node (4) [label=below:$\upsilon_{4}$] {}
	-- (5,-0.5) node (41) [label=below:$\upsilon_{4,1}$] {}
	-- (42);
\draw[double] 
	(21) -- (31)
	(3) -- (4);
\draw 
	(2) --(21)
	(31) -- (41);

\end{tikzpicture}

 \caption{Hasse diagram of $\IG(2,6)$}
 \label{fig:hasse.2.6}
\end{figure}
\begin{figure}
 \centering
 \begin{tikzpicture}[scale=1.3]
 
\tikzstyle{every node}=[draw,circle,fill=white,minimum size=6pt,inner sep=0pt]

\draw[blue] (0,0) node (0) [label=below:$\upsilon_{\emptyset}$] {}
        -- (1,0) node (1) [label=below:$\upsilon_1$] {}
	-- (2,0.5) node (2) [label=above:$\upsilon_{2}$] {}
	-- (3,0.5) node (3) [label=above:$\upsilon_{3}$] {}
	-- (4,1) node (4) [label=above:$\upsilon_{4}$] {}
	-- (5,1) node (5) [label=above:$\upsilon_{5}$] {};
\node[draw=none,text=blue] at (3,1.3) {$\mathrm{IG}(1,6)$};
\draw[red] (2,-0.5) node (11) [label=below:$\upsilon_{1,1}$] {}
	-- (3,-0.5) node (21) [label=below:$\upsilon_{2,1}$] {}
	-- (4,0) node (31) [label=above:$\upsilon_{3,1}$] {}
	-- (5,0) node (41) [label=above:$\upsilon_{4,1}$] {}
	-- (6,1) node (6) [label=above:$\upsilon_{6}$] {}
	-- (7,1) node (61) [label=above:$\upsilon_{6,1}$] {}
	-- (8,0.5) node (62) [label=above:$\upsilon_{6,2}$] {}
	-- (9,0.5) node (63) [label=above:$\upsilon_{6,3}$] {}
	-- (10,0) node (64) [label=above:$\upsilon_{6,4}$] {}
	-- (11,0) node (65) [label=above:$\upsilon_{6,5}$] {};
\draw[red] (21)
	-- (4,-1) node (22) [label=below:$\upsilon_{2,2}$] {}
	-- (5,-1) node (32) [label=below:$\upsilon_{3,2}$] {}
	-- (6,0) node (51) [label=above:$\upsilon_{5,1}$] {}
        -- (7,0) node (52) [label=above:$\upsilon_{5,2}$] {}
        -- (8,-0.5) node (53) [label=below:$\upsilon_{5,3}$] {}
        -- (9,-0.5) node (54) [label=below:$\upsilon_{5,4}$] {}
	-- (64);
\draw[red] (41)
	-- (6,-1) node (42) [label=below:$\upsilon_{4,2}$] {}
	-- (7,-1) node (43) [label=below:$\upsilon_{4,3}$] {}
	-- (53);
\draw[red] 
	(31) -- (32)
	(51) -- (61)
	(52) -- (62)
	(53) -- (63)
	(42) -- (52);
\draw[red,double]
	(41) -- (51)
	(32) -- (42);
\node[draw=none,text=red] at (8,-1.5) {$\mathrm{IG}(2,7)$};
\draw
	(1) -- (11)
	(2) -- (21)
	(3) -- (31)
	(4) -- (41)
	(5) -- (51);
\draw[double]
	(5) -- (6);
\end{tikzpicture}

 \caption{Hasse diagram of $\IG(2,8)$}
 \label{fig:hasse.2.8}
\end{figure}
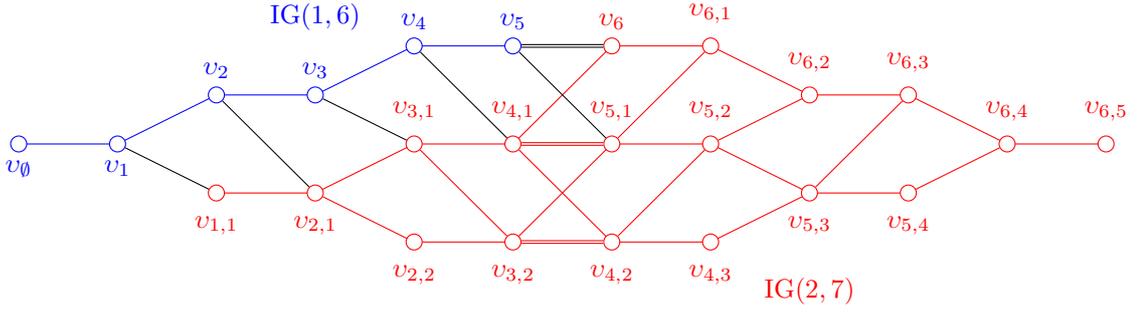

Looking at these examples we notice that the Hasse diagram of  $\IG(2,7)$ contains the Hasse diagram of $\IG(2,6)$ as a subgraph, 
the subgraph induced by the remaining vertices being isomorphic to the Hasse diagram of $\IG(1,6)$. Moreover, the Hasse diagram of  
$\IG(2,8)$ contains the Hasse diagram of $\IG(2,7)$ as a subgraph, the subgraph induced by the remaining vertices being isomorphic 
to the Hasse diagram of $\IG(1,6)$.
This is a general fact.
More precisely, we have the following decomposition of the Hasse diagrams of the even and odd symplectic Grassmannian :
\begin{prop}
\begin{itemize}
 \item  The Hasse diagram of $\IG(2,2n+1)$ is isomorphic to the disjoint union of :
\begin{enumerate}
 \item the Hasse diagram of $\IG(2,2n)$, whose vertices are the classes in $\IG(2,2n+1)$ associated to the Schubert varieties not 
contained in the closed orbit ;
\item the Hasse diagram of the closed orbit $\O\cong\IG(1,2n)$ ;
\end{enumerate}
with parts $1$ and $2$ linked by the simple arrows joining $\tau_{2n-3}$ to $\tau_{2n-1,-1}$ and $\tau_{2n-2,a}$ to $\tau_{2n-1,a}$ for 
$0\leq a\leq 2n-3$.
\item The Hasse diagram of $\IG(2,2n)$ is isomorphic to the disjoint union of :
\begin{enumerate}
 \item the Hasse diagram of $\IG(2,2n-1)$, whose vertices are the classes in $\IG(2,2n)$ associated to the Schubert varieties 
contained in $X_{1,1}$ ;
\item the Hasse diagram of $\IG(1,2n-2)$, corresponding to the classes $\tau_{\emptyset}$ to $\tau_{2n-3}$ ;
\end{enumerate}
with parts $1$ and $2$ linked by the double arrow joining $\tau_{2n-3}$ to $\tau_{2n-2}$ and the simple arrows joining $\tau_{p}$ to $\tau_{p,1}$ 
for $1\leq p\leq 2n-3$.
\end{itemize}
\end{prop}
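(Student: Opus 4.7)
My plan is to read the decomposition directly off the Pieri formulas. The Hasse diagram is, by definition, encoded by the multiplication by $\tau_1$, so Proposition~\ref{prop:pieri.1} for the odd case and the analogous rule for $\IG(2,2n)$ (implicit in \cite[Thm~1.1]{BKT} and in the derivation of Proposition~\ref{prop:pieri.1}) suffice. In each case I would partition the Schubert classes into the two claimed subsets and then verify that the Pieri products split into internal summands (reproducing the Hasse diagram of the smaller Grassmannian) and summands outside the subset (which should give exactly the listed linking arrows).

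For the odd case, I would first identify the classes contained in $\O$ using the index-set criterion: $X_\lambda \subseteq \O = X_{2n-1,-1}$ forces $\lambda_1 = 2n-1$, giving the $2n$ classes $\tau_{2n-1,j}$ for $-1 \leq j \leq 2n-2$. Applying Proposition~\ref{prop:pieri.1} to each of them, the only valid summand is $\tau_{2n-1,j+1}$, so they form the chain of $\P{2n-1} \cong \IG(1,2n)$. The remaining classes have $\lambda_1 \leq 2n-2$ and are indexed by exactly the same $(n-2)$-strict partitions as those of $\IG(2,2n)$, and a term-by-term comparison of the odd Pieri rule with the even one shows they agree on the arrows internal to this subset; the ``extra'' summand $\tau_{2n-1,b}$ that appears only in the odd rule, when $a = 2n-2$, is precisely the simple linking arrow $\tau_{2n-2,b} \to \tau_{2n-1,b}$. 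In the boundary case $a+b = 2n-3$, the specialisation $(a,b) = (2n-3,0)$ yields the further summand $\tau_{2n-1,-1}$, giving the remaining simple linking arrow $\tau_{2n-3} \to \tau_{2n-1,-1}$.

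For the even case the argument runs in parallel. A direct computation with the index-set criterion $P_\lambda \leq (2n-2,2n-1)$ shows that the Schubert classes $\upsilon_\lambda$ of $\IG(2,2n)$ with $X_\lambda \subseteq X_{1,1}$ are exactly those with $\lambda_2 \geq 1$, together with the single additional class $\upsilon_{2n-2,0}$. The cellular structure of $X_{1,1} \cong \IG(2,2n-1)$ induces a natural bijection $\tau_{c,d} \leftrightarrow \upsilon_{c+1,d+1}$ (with $\tau_{2n-3,-1} \leftrightarrow \upsilon_{2n-2,0}$), and under this bijection the Pieri rule for $\IG(2,2n-1)$ (Proposition~\ref{prop:pieri.1} with $n$ replaced by $n-1$) matches that of $\IG(2,2n)$, so the induced subgraphs agree. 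The complementary classes $\tau_\emptyset, \tau_1, \ldots, \tau_{2n-3}$ form a chain via $\tau_p \cup \tau_1 = \tau_{p+1} + \tau_{p,1}$, reproducing the Hasse diagram of $\IG(1,2n-2)$; the second summand yields the simple linking arrows $\tau_p \to \tau_{p,1}$ for $1 \leq p \leq 2n-3$, and the boundary case $p = 2n-3$ gives $\tau_{2n-3} \cup \tau_1 = \tau_{2n-3,1} + 2\tau_{2n-2}$, contributing the double linking arrow $\tau_{2n-3} \to \tau_{2n-2}$.

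The hard part will be the bookkeeping of multiplicities in the boundary cases $a+b = 2n-3$ (odd) and $a+b = 2n-5$ (even, after applying the bijection), where double arrows appear and one must verify they are reproduced consistently on both sides. A further non-obvious combinatorial input in the even case is the presence of the extra class $\upsilon_{2n-2,0}$ among the classes in $X_{1,1}$, which is only visible through the index-set description of Bruhat order and not via the naive componentwise order on partitions.
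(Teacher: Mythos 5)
Your proof is correct. In the even case it coincides with the paper's: both use the identification $X_{1,1}\cong\IG(2,2n-1)$ together with the Pieri rule, and the subtlety you flag — that $\upsilon_{2n-2,0}$ belongs to $X_{1,1}$ even though $(2n-2,0)\not\geq(1,1)$ componentwise, which is only visible through the index-set description — is exactly what one must get right. In the odd case your route diverges from the paper's. You verify directly that the $\tau_1$-Pieri formula for $\IG(2,2n+1)$, restricted to partitions with $\lambda_1\leq 2n-2$, is literally the same formula as the $\upsilon_1$-Pieri rule for $\IG(2,2n)$ (both have boundary locus $a+b=2n-3$, a fact worth stating explicitly), and then read the linking arrows off the extra summands $\tau_{2n-1,b}$ (from $a=2n-2$) and $\tau_{2n-1,-1}$ (from $(a,b)=(2n-3,0)$). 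The paper instead argues geometrically: it restricts to the open orbit, which projects onto $\IG(2,2n)$ via $\Sigma\mapsto(\text{image of }\Sigma\text{ in }V/K)$, checks $\phi^*\upsilon_\lambda=i^*\tau_\lambda$, and uses the facts that the closed-orbit classes restrict to zero and the remaining restrictions are independent to force the internal coefficients to match those of $\IG(2,2n)$, leaving only the linking arrows to be determined by Pieri. (The paper's diagram writes $\mathbb{O}$ and $\phi(\Sigma)=\Sigma/K$, which only parses for the open orbit and the image in $V/K$, but that is clearly the intended argument.) Your combinatorial route is more self-contained; its only cost is having to extract the degree-one Pieri rule for $\IG(2,2n)$ from \cite{BKT}, which you correctly flag as an input.
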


\begin{proof}
We will denote by $\HH_{\IG(m,N)}$ the Hasse diagram of $\IG(m,N)$. 
\begin{itemize}
 \item Let $G_1$ be the subgraph of $\HH_{\IG(2,2n+1)}$ induced by the vertices $\tau_\lambda$ for $\lambda$ such that $\lambda_1<2n-1$. 
We need to prove that $G_1=\HH_{\IG(2,2n)}$. First notice these graphs have the same set of vertices. Then use the diagram :

\begin{center}
  \begin{tikzpicture}
  \matrix [matrix of math nodes,row sep=1cm, column sep=1cm,text height=1.5ex,text depth=0.25ex]
  {
	 |(O)| \mathbb{O} & |(IGp)| \mathrm{IG}(2,2n) \\
	 |(IG)| \mathrm{IG}(2,2n+1) & \\
};
\path[right hook->]
(O) edge node[auto] {$i$} (IG)
;
\path[->]
(O) edge node[auto] {$\phi$} (IGp)
;
\end{tikzpicture}

\end{center}

where $i$ is the natural inclusion and $\phi(\Sigma)=\Sigma/K$ for each $\Sigma\in\O$. Looking at incidence conditions we notice that 
$\phi^*\upsilon_\lambda=i^*\tau_\lambda$ for each Schubert class $\upsilon_\lambda$ of $\IG(2,2n)$, and we get
\[
 \phi^*\left(\upsilon_1\cup\upsilon_\lambda\right)=\phi^*\upsilon_1\cup\phi^*\upsilon_\lambda=i^*(\tau_1\cup\tau_\lambda),
\]
hence $G_1$ and $\HH_{\IG(2,2n)}$ have the same arrows. 
Now the vertices of $\HH_{\IG(2,2n+1)}$ not contained in $G_1$ correspond to the classes $\tau_\lambda$ with $\lambda_1=2n-1$, that is to 
the Schubert varieties contained in the closed orbit $\O\cong \P{2n-1}$. So the graph $G_2$ they induce is isomorphic to $\IG(1,2n)$. 
Finally, the arrows joining $G_1$ and $G_2$ are determined using the Pieri rule \ref{prop:pieri.1}.
 \item For $\IG(2,2n)$ the result is simply a consequence of the isomorphism between $\IGl$ and the Schubert subvariety $X_{1,1}$ of 
$\IG(2,2n)$ stated in \ref{sec:embed.C}, and of the Pieri rule for $\IG(2,2n)$ proved in \cite[Thm. 1.1]{BKT}. \qedhere
\end{itemize}
\end{proof}

 This result can be easily generalized to all symplectic Grassmannians $\IG(m,N)$ :
\begin{prop}
 \begin{itemize}
  \item The Hasse diagram of $\IG(m,2n)$ is made of the union of :
    \begin{enumerate}
     \item the Hasse diagram $\HH_1$ of $\IG(m,2n-1)$, whose vertices are the cohomology classes of $\IG(m,2n)$ associated to the Schubert varieties contained in $X_{1^m}$ ;
     \item the Hasse diagram $\HH_2$ of $\IG(m-1,2n-2)$.
    \end{enumerate}
    The arrows from $\HH_2$ to $\HH_1$ are of two types :
    \begin{itemize}
     \item simple arrows $\lambda \ra \mu$ for $\lambda,\mu$ such that $\lambda_1\leq 2n-1-m$, $\lambda_{m-1}\geq 1$, $\lambda_m=0$ and $\mu=(\lambda_1,\dots,\lambda_{m-1},1)$ ;
     \item double arrows $\lambda \ra \mu$ for $\lambda,\mu$ such that $\lambda_1=2n-1-m$, $\lambda_m=0$ and $\mu=(2n-m,\lambda_2,\dots,\lambda_m)$.
    \end{itemize}
    There is no arrow from $\HH_1$ to $\HH_2$.
  \item The Hasse diagram of $\IG(m,2n+1)$ is made of the union of :
   \begin{enumerate}
    \item the Hasse diagram $\HH_1$ of $\IG(m,2n)$, whose vertices are the cohomology classes associated to the Schubert varieties of $\IG(m,2n+1)$ not contained in the closed orbit ;
    \item the Hasse diagram $\HH_2$ of the closed orbit $\O\cong\IG(m-1,2n)$.
   \end{enumerate}
   The arrows from $\HH_1$ to $\HH_2$ are simple and of two types :
   \begin{itemize}
    \item $\lambda \ra \mu$ for $\lambda,\mu$ such that $\lambda_1=2n-m$ and $\mu=(2n+1-m,\lambda_2,\dots,\lambda_m)$ ;
    \item $\lambda \ra \mu$ for $\lambda,\mu$ such that $2(n-m)+\#\left\{1\leq i\leq m\mid\lambda_i\geq 1\right\}\leq\lambda_1\leq 2n-1-m$ and $\mu=(2n+1-m,\lambda_2,\dots,\lambda_{\lambda_1-2(n-m)},-1,\dots,-1)$.
   \end{itemize}
   There is no arrow from $\HH_2$ to $\HH_1$.
  \end{itemize}
\end{prop}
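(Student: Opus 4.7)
My plan is to extend each of the two parts of the $m=2$ proof above to arbitrary $m$ using the same template: (a) identify the vertex sets of $\HH_1$ and $\HH_2$ with the Schubert bases of the two smaller Grassmannians, (b) match the internal arrows of each subgraph with those of the corresponding Hasse diagram using a projection-formula argument or a direct comparison of Pieri rules, and (c) read the cross-arrows directly from the BKT Pieri rule \cite[Thm.~1.1]{BKT}.

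For $\IG(m,2n)$, the generalization of Proposition \ref{prop:mihai.2} identifies $X_{1^m}\subset\IG(m,2n)$ with $\IG(m,2n-1)$. Writing $j:X_{1^m}\hookrightarrow\IG(m,2n)$, for any class $\upsilon_\lambda$ with $X_\lambda\subset X_{1^m}$ one has $\upsilon_\lambda=j_*[X_{\lambda-1^m}]$, and the projection formula gives $\upsilon_\lambda\cup\upsilon_1=j_*([X_{\lambda-1^m}]\cup j^*\upsilon_1)$. Since $j^*\upsilon_1$ is the hyperplane class on $\IG(m,2n-1)$ (degree one leaves no room for a correction term in the generalization of Lemma \ref{lem:restriction}), the arrows of $\HH_1$ coincide with those of $\HH_{\IG(m,2n-1)}$, and the Pieri rule never decreases a partition, so no arrow leaves $\HH_1$. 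The vertices of $\HH_2$ are the partitions $\lambda$ with $\lambda_m=0$ and $\lambda_1\leq 2n-m-1$, which match the Schubert basis of $\IG(m-1,2n-2)$ by truncating the trailing zero; the arrows of $\HH_2$ and the two types of cross-arrows are then direct combinatorial readings of the BKT Pieri rule on these indices, the double cross-arrow corresponding exactly to the multiplicity-two case of that rule at the top row.

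For $\IG(m,2n+1)$, Proposition \ref{prop:mihai.1} identifies the closed orbit $\O$ with $\IG(m-1,2n)$ via $\Sigma\mapsto\Sigma/K$; this isomorphism matches the special partitions ($\lambda_1=2n+1-m$) with the Schubert basis of $\IG(m-1,2n)$ via the induced index-set shift. The arrows of $\HH_2$ follow from the projection formula applied to $i:\O\hookrightarrow\IG(m,2n+1)$, using the compatibility $\Lambda^m V\supset K\wedge\Lambda^{m-1}(V/K)$ of Plücker embeddings to ensure that $i^*\tau_1$ is the hyperplane class on $\O$. For $\HH_1$, I would derive the Pieri rule on $\IG(m,2n+1)$ from the BKT rule on $\IG(m,2n+2)$ via the embedding $\ii$ (mimicking Proposition \ref{prop:pieri.1}), and observe that the coefficient of $\tau_\mu$ in $\tau_\lambda\cup\tau_1$ for $\lambda_1,\mu_1\leq 2n-m$ coincides with that of $\upsilon_\mu$ in $\upsilon_\lambda\cup\upsilon_1$ in $\IG(m,2n)$. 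The cross-arrows $\HH_1\to\HH_2$ are exactly the Pieri summands whose first row has length $2n+1-m$: type one is the direct box-addition from $\lambda_1=2n-m$, and type two collects the cascade degenerations in which forcing the first row to length $2n+1-m$ pushes the last few rows down to $-1$, occurring precisely when $\lambda_1\in[2(n-m)+\#\{i:\lambda_i\geq 1\},\,2n-1-m]$.

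The main obstacle is not conceptual — the argument skeleton is already present in the $m=2$ case — but combinatorial: setting up the indexing bijections cleanly (especially between special partitions of $\IG(m,2n+1)$ and ordinary partitions of $\IG(m-1,2n)$), and reading the BKT Pieri rule carefully at the boundary to extract the exact form of the two types of cross-arrows in each case.
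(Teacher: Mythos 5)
Your proposal follows exactly the strategy the paper indicates: the paper's own ``proof'' of this proposition is just the remark that ``the proof is very similar to the $m=2$ case'' with the cross-arrows read off from a Pieri rule for the symplectic Grassmannian (the paper cites Pragacz--Ratajski where you cite Buch--Kresch--Tamvakis, but these are equivalent formulations of the same rule). Your outline --- identifying $X_{1^m}$ (resp.\ the closed orbit) with a smaller Grassmannian, matching internal arrows via the projection formula, and reading cross-arrows from the Pieri rule on the boundary partitions --- is precisely the intended generalisation, and the combinatorial bookkeeping you flag as the remaining work is indeed what the paper chose not to write out.
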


The proof is very similar to the $m=2$ case. However, the determination of the arrows between both parts of the Hasse diagram is a bit more complicated and uses a Pieri rule for the symplectic Grassmannian proved by Pragacz and Ratajski \cite[Thm 2.2]{PR}, hence we will not give it here.

\subsection{Embedding in the usual Grassmannian}
\label{sec:embed.A}

The easiest way to find a Giambelli formula for $\IGl$ is to use the Giambelli formula on $\mathrm{G}(2,2n+1)$ and to ``pull it back'' to $\IGl$.
More precisely, we use the natural embedding :
\[
\mathbf{j} : \IGl\hookrightarrow\mathrm{G}\left(2,2 n+1\right).
\]
This embedding identifies $\IGl$ with a generic hyperplane section of $\mathrm{G}(2,2n+1)$. So using the same arguments as for 
Lemma \ref{lem:restriction}, we can prove the
\begin{lem}
\label{lem:restriction.2}
\begin{itemize}
 \item If $a+b<2n-2$, then $\mathbf{j}^*\sigma_{a,b}=\tau_{a,b}$.
 \item If $a+b\geq 2n-2$, then
\[
 \mathbf{j}^*\sigma_{a,b}=\tau_{a,b}+\tau_{a+1,b-1}.
\]
\end{itemize}
\end{lem}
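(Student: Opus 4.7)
The plan is to follow the same strategy as the proof of Lemma~\ref{lem:restriction}. First, since $\omega\in\left(\Lambda^{2}V\right)^{*}$ defines a single linear equation on the Plücker embedding of $\G\left(2,2n+1\right)$ whose zero locus is $\IGl$, the subvariety $\IGl$ is a generic hyperplane section and $\mathbf{j}_{*}1=\sigma_{1}$. Combined with the projection formula and the classical Pieri rule in $\G\left(2,2n+1\right)$, this yields
\[
\mathbf{j}_{*}\mathbf{j}^{*}\sigma_{a,b}=\sigma_{a,b}\cup\sigma_{1}=\sigma_{a+1,b}+\sigma_{a,b+1}
\]
(with the usual convention that invalid partitions are dropped).

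I would then compute the push-forward $\mathbf{j}_{*}\tau_{c,d}$ by reading off the incidence conditions of $X_{c,d}\left(F_{\bullet}\right)$ from its index set $\left(p_{1},p_{2}\right)$ and comparing with Schubert varieties of $\G\left(2,2n+1\right)$. If $c+d\leq 2n-3$, one has $\left(p_{1},p_{2}\right)=\left(2n-c,2n+1-d\right)$; the conditions of $X_{c,d}\left(F_{\bullet}\right)$ are those of $\Sigma_{c,d}\left(F_{\bullet}\right)\subset\G\left(2,2n+1\right)$ plus isotropy, and a dimension count shows that the intersection is proper, so $\mathbf{j}_{*}\tau_{c,d}=\sigma_{c,d}\cup\sigma_{1}=\sigma_{c+1,d}+\sigma_{c,d+1}$. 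If $c+d\geq 2n-2$, one has $\left(p_{1},p_{2}\right)=\left(2n-c,2n-d\right)$ and the conditions match those of $\Sigma_{c,d+1}\left(F_{\bullet}\right)$; the key observation is that $c+d\geq 2n-2$ is equivalent to $F_{2n-d}\subset F_{2n-c}^{\perp}$ (coming from $F_{j}^{\perp}=F_{2n+2-j}$ for the isotropic flag), so any $\Sigma\subset F_{2n-d}$ meeting $F_{2n-c}$ is automatically isotropic. Hence $X_{c,d}\left(F_{\bullet}\right)=\Sigma_{c,d+1}\left(F_{\bullet}\right)$ and $\mathbf{j}_{*}\tau_{c,d}=\sigma_{c,d+1}$.

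Matching these push-forwards against the projection formula now concludes. For $a+b<2n-2$, the first case gives $\mathbf{j}_{*}\tau_{a,b}=\sigma_{a+1,b}+\sigma_{a,b+1}$, which already realises $\mathbf{j}_{*}\mathbf{j}^{*}\sigma_{a,b}$; noting that $\mathbf{j}_{*}$ is injective on the graded piece $\H^{2\left(a+b\right)}\left(\IGl,\Z\right)$ (which follows from the explicit formulas, by inspecting the matrix of $\mathbf{j}_{*}$ in the Schubert bases) then forces $\mathbf{j}^{*}\sigma_{a,b}=\tau_{a,b}$. For $a+b\geq 2n-2$, the second case applies to both $\tau_{a,b}$ and $\tau_{a+1,b-1}$ (since $\left(a+1\right)+\left(b-1\right)=a+b$), yielding $\mathbf{j}_{*}\left(\tau_{a,b}+\tau_{a+1,b-1}\right)=\sigma_{a,b+1}+\sigma_{a+1,b}$, which matches the projection formula and forces $\mathbf{j}^{*}\sigma_{a,b}=\tau_{a,b}+\tau_{a+1,b-1}$.

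The main obstacle is the identification $X_{c,d}\left(F_{\bullet}\right)=\Sigma_{c,d+1}\left(F_{\bullet}\right)$ in the second case, which would fail for a non-isotropic flag and really depends on the orthogonality relation inherited from the isotropic flag of $\C^{2n+2}$. Edge cases — the special partition $\left(2n-1,-1\right)$ indexing the closed orbit $\O$, and partitions for which $\tau_{a+1,b-1}$ or a term in the Pieri expansion is not valid — are handled by the convention of dropping classes that do not correspond to valid Schubert indices, as adopted after Lemma~\ref{lem:restriction}.
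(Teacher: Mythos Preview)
Your proof is correct and follows exactly the approach the paper indicates (``using the same arguments as for Lemma~\ref{lem:restriction}''): compute $\mathbf{j}_*\tau_{c,d}$ from incidence conditions, then recover $\mathbf{j}^*$ via the projection formula and injectivity of $\mathbf{j}_*$. The one place to tighten is the step ``a dimension count shows that the intersection is proper, so $\mathbf{j}_*\tau_{c,d}=\sigma_{c,d}\cup\sigma_1$'': properness alone gives $[\Sigma_{c,d}\cap\IG]=m\,\sigma_{c,d}\cup\sigma_1$ for some $m\geq 1$, and you need generic transversality (checked at a point of the open cell, exactly as in Lemma~\ref{lem:cycle}) to conclude $m=1$.
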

This proves that the map $\mathbf{j}^*$ is surjective and that its kernel is generated by the class 
\[
\sum_{i=0}^{n-1}(-1)^{n-i}\sigma_{n+i,n-i}.
\]

\subsection{Giambelli formula}
\label{sec:giambelli}

With Lemma \ref{lem:restriction.2} and the Giambelli formula for $\mathrm{G}(2,2n+1)$, we can prove a Giambelli formula with respect to $\tau_1$ 
and $\tau_{1,1}$. First define $d_r:=(\tau_{1^{1+j-i}})_{1\leq i,j\leq r}$, with the convention that $\tau_{1^p}=0$ if $p<0$ or $p>2$. We have the
\begin{prop}[Giambelli formula]
\begin{align*}
 \tau_{a,b}=\begin{cases}
        \tau_{1,1}^b d_{a-b} & \text{if $a+b\leq 2n-3$,} \\
        \sum_{q=0}^p (-1)^{p-q}\tau_{1,1}^{c-q}d_{2q}  & \text{if $(a,b)=(c+1+p,c-1-p)$,} \\
	\sum_{q=p}^{2n-2-c} (-1)^{q-p}\tau_{1,1}^{c-q}d_{2q+1}  & \text{if $(a,b)=(c+1+p,c-p)$,}
     \end{cases}
\end{align*}
where $n-1\leq c\leq 2n-2$ and $0\leq p\leq 2n-2-c$.
\end{prop}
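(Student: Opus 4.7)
The plan is to derive the formula by pulling back the classical Giambelli formula on $\G(2,2n+1)$ through $\mathbf{j}$ and then applying Lemma \ref{lem:restriction.2} to convert pullbacks into $\tau$-classes. Recall that on $\G(2,2n+1)$, iterated Pieri with the special class $\sigma_{1,1}$ yields $\sigma_{a,b}=\sigma_{1,1}^{b}\sigma_{a-b}$, and the classical determinantal identity gives $\sigma_r=\det(\sigma_{1^{1+j-i}})_{1\leq i,j\leq r}$ (with $\sigma_{1^k}=0$ for $k\geq 3$ on a two-step Grassmannian). Since Lemma \ref{lem:restriction.2} gives $\mathbf{j}^*\sigma_1=\tau_1$ and $\mathbf{j}^*\sigma_{1,1}=\tau_{1,1}$ (both live in the range $a+b<2n-2$), pulling back provides the key identity
\[
\mathbf{j}^*\sigma_{a,b}=\tau_{1,1}^{b}\,d_{a-b}
\]
for every partition $(a,b)$.

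The first case ($a+b\leq 2n-3$) is then immediate: Lemma \ref{lem:restriction.2} directly yields $\mathbf{j}^*\sigma_{a,b}=\tau_{a,b}$, so $\tau_{a,b}=\tau_{1,1}^{b}d_{a-b}$. If instead $a+b\geq 2n-2$, the same lemma reads $\tau_{a,b}+\tau_{a+1,b-1}=\tau_{1,1}^{b}d_{a-b}$, a triangular relation linking classes on the antidiagonal $a+b=s$. The idea is then to invert this system by telescoping, using a boundary condition at one end of the antidiagonal.

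For case 2 ($s=2c$, $(a,b)=(c+1+p,c-1-p)$), set $T_k:=\tau_{c+1+k,c-1-k}$. The relation becomes $T_k+T_{k+1}=\tau_{1,1}^{c-1-k}d_{2k+2}$, and the boundary condition $T_{-1}=\tau_{c,c}=0$ holds because $c\geq n-1$ forces $(c,c)$ to violate $(n-2)$-strictness. Telescoping $T_p=\sum_{k=-1}^{p-1}(-1)^{p-1-k}(T_k+T_{k+1})$ and re-indexing by $q=k+1$ produces the announced sum. For case 3 ($s=2c+1$, $(a,b)=(c+1+p,c-p)$), set $U_k:=\tau_{c+1+k,c-k}$; then $U_k+U_{k+1}=\tau_{1,1}^{c-k}d_{2k+1}$, and the opposite-end boundary $U_{2n-1-c}=\tau_{2n,2c-2n+1}=0$ holds since $\lambda_1=2n$ exceeds the allowed range. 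Telescoping from $k=p$ up to $k=2n-2-c$ then yields the announced sum.

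The main obstacle is careful bookkeeping of index ranges and boundary conditions; in particular, one must check that the closed-orbit class $\tau_{2n-1,-1}$ is covered uniformly, which it is since it corresponds to $T_{n-1}$ in the $c=n-1$ family of case 2. Beyond this, the argument reduces to an algebraic identity in $\G(2,2n+1)$, a pullback, and two elementary telescoping sums.
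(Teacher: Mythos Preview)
Your argument is correct and follows exactly the approach the paper indicates: pull back the Giambelli identity $\sigma_{a,b}=\sigma_{1,1}^b\sigma_{a-b}$ (with $\sigma_r=\det(\sigma_{1^{1+j-i}})$) from $\G(2,2n+1)$ via $\mathbf{j}^*$, then use Lemma \ref{lem:restriction.2} and telescope along each antidiagonal, the boundary conditions $\tau_{c,c}=0$ (for $c\geq n-1$) and $\tau_{2n,\cdot}=0$ being exactly the ones needed. The paper itself only states this as a one-line method, so your write-up is a faithful expansion of it; the check that $\tau_{2n-1,-1}$ fits into case~2 with $c=n-1$, $p=n-1$ is a nice touch.
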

We can also state a Giambelli formula expressing classes in terms of the $e_p:=c_p(\mathcal{Q})$ :
\begin{prop}[Another Giambelli formula]
 \begin{align*}
 \tau_{a,b}=\begin{cases}
        e_a e_b-e_{a+1} e_{b-1} & \text{if $a+b\leq 2n-3$,} \\
        (-1)^{a-n}e_{n-1}^2-e_a e_b+2\sum_{j=1}^{a-n}(-1)^{a-n-j}e_{n-1+j} e_{n-1-j} & \text{if $a+b=2n-2$,} \\
	e_a e_b+2\sum_{j=1}^{2n-1-a}(-1)^j e_{a+j} e_{b-j} & \text{if $a+b\geq 2n-1$}.
     \end{cases}
\end{align*}
\end{prop}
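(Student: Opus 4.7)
The plan is to pull back the classical Giambelli identity on $\mathrm{G}(2,2n+1)$ via $\mathbf{j}^*$. Since the quotient bundle $\mathcal{Q}$ on $\IGl$ is the restriction of the universal quotient bundle on the type-A Grassmannian, we have $\mathbf{j}^*\sigma_p = c_p(\mathcal{Q}) = e_p$, so the classical Giambelli identity $\sigma_{a,b} = \sigma_a \sigma_b - \sigma_{a+1}\sigma_{b-1}$ gives
$$\mathbf{j}^*\sigma_{a,b} = e_a e_b - e_{a+1}e_{b-1}.$$
Combined with Lemma \ref{lem:restriction.2}, the first case ($a+b \leq 2n-3$) is immediate; for $a+b \geq 2n-2$ we obtain the key recursion
$$\tau_{a,b} + \tau_{a+1,b-1} = e_a e_b - e_{a+1}e_{b-1}.$$

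For the third case ($a+b \geq 2n-1$), I would iterate the recursion upward: write $\tau_{a,b} = (e_a e_b - e_{a+1}e_{b-1}) - \tau_{a+1,b-1}$ and recurse, stopping at the first step $k = 2n - a$. Since $\mathcal{Q}$ has rank $2n-1$ we have $e_{2n} = 0$, and the partition $(2n,b-2n+a)$ is not admissible so $\tau_{2n,b-2n+a} = 0$; both boundary terms thus vanish, and the telescoping collapses to the stated alternating sum after pairing the terms $(-1)^k e_{a+k}e_{b-k}$ appearing twice for $1 \leq k \leq 2n-a-1$.

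For the middle case ($a+b = 2n-2$), I would instead run the recursion downward, moving from $(a,b)$ through $(a-1,b+1), (a-2,b+2), \ldots$ toward the antidiagonal point $(n-1,n-1)$. The crucial observation is that the partition $(n-1,n-1)$ fails to be $(n-2)$-strict, so $\tau_{n-1,n-1} = 0$; the downward recursion therefore terminates at $\tau_{n,n-2} = e_{n-1}^2 - e_n e_{n-2}$. Telescoping back produces an alternating sum whose boundary contribution is the isolated $(-1)^{a-n}e_{n-1}^2$ term coming from the symmetric pair $e_{n-1}e_{n-1}$, and whose interior terms pair up to produce the doubled sum $2\sum_{j=1}^{a-n}(-1)^{a-n-j}e_{n-1+j}e_{n-1-j}$.

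The main obstacle is bookkeeping in the middle case: one must verify that the telescoping indices $k$ translate correctly into the parametrization $j$ of the proposition via $j = a - n + 1 - k$, giving $e_{a-k}e_{b+k} = e_{n-1+j}e_{n-1-j}$ and signs $(-1)^{a-n-j}$. It is worth noting, as a check, that the exceptional orbit class $\tau_{2n-1,-1}$ never intervenes in the derivation, since iterating downward from $(a,b)$ with $a+b = 2n-2$ moves away from $(2n-1,-1)$ rather than toward it, and thus we do not need to invoke the relation $e_{2n-2} = \tau_{2n-2} + \tau_{2n-1,-1}$ to eliminate it.
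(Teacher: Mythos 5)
Your argument is correct and follows exactly the approach the paper indicates for both Giambelli formulas (it cites Lemma~\ref{lem:restriction.2} and the Giambelli formula on $\mathrm{G}(2,2n+1)$ but gives no details): pull back the two-row Giambelli identity $\sigma_{a,b}=\sigma_a\sigma_b-\sigma_{a+1}\sigma_{b-1}$ through $\mathbf{j}^*$, use $\mathbf{j}^*\sigma_p=e_p$, and telescope the resulting relation $\tau_{a,b}+\tau_{a+1,b-1}=e_a e_b-e_{a+1}e_{b-1}$. The downward iteration terminating at $\tau_{n-1,n-1}=0$ and the upward iteration terminating because $e_{2n}=0$ and $\tau_{2n,\cdot}=0$ are both valid, and the index bookkeeping checks out (note only that for $(a,b)=(2n-1,-1)$ the first step of the downward recursion still secretly uses $e_{2n-2}=\tau_{2n-2}+\tau_{2n-1,-1}$ in the guise of $\mathbf{j}^*\sigma_{2n-2,0}=\tau_{2n-2,0}+\tau_{2n-1,-1}$).
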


\subsection{Two presentations for the classical cohomology ring}
\label{sec:presentation}

\subsubsection{Presentation in terms of the classes $e_p$}

\begin{prop}[Presentation of $\H^*\left(\IGl,\Z\right)$]
\label{prop:pres.1}
The ring $\H^*\left(\IGl,\Z\right)$ is ge\-nerated by the classes $(e_p)_{1\leq p\leq 2n-1}$ and the relations are
\begin{align*}
\det\left(e_{1+j-i}\right)_{1\leq i,j\leq r} &= 0\text{ for }3\leq r\leq 2n, \tag{R1} \label{R1}\\
e_n^2+2\sum_{i\geq 1}e_{n+i}e_{n-i} &=0 \tag{R2} \label{R2}.
\end{align*}
\end{prop}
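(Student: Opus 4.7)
The plan is to verify that both families of relations (R1) and (R2) hold in $\H^*(\IGl,\Z)$, to observe that by the Giambelli formulas of \ref{sec:giambelli} the classes $e_p$ generate the cohomology, and then to conclude by matching Hilbert series.

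For the relations (R1): the tautological subbundle $\mathcal{S}$ on $\IGl$ has rank $2$, so $c_r(\mathcal{S})=0$ for every $r\ge 3$. Combining this with the Whitney identity $c(\mathcal{S})c(\mathcal{Q})=1$ and the dual Jacobi--Trudi expansion gives $c_r(\mathcal{S})=(-1)^r\det(e_{1+j-i})_{1\le i,j\le r}$, which is precisely (R1). For (R2), I recall from \ref{sec:embed.A} that the pullback $\mathbf{j}^*\colon \H^*(\G(2,2n+1),\Z)\to\H^*(\IGl,\Z)$ is surjective with kernel generated by $\alpha=\sum_{i=0}^{n-1}(-1)^{n-i}\sigma_{n+i,n-i}$. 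Expanding $\alpha$ via the Grassmannian Giambelli formula $\sigma_{a,b}=e_ae_b-e_{a+1}e_{b-1}$ and telescoping on consecutive indices, each monomial $e_{n+k}e_{n-k}$ with $1\le k\le n-1$ appears with coefficient $\pm 2$, while the boundary contributions $e_n^2$ and $e_{2n}e_0$ appear with coefficient $\pm 1$. Using $e_{2n}=c_{2n}(\mathcal{Q})=0$ (since $\mathcal{Q}$ has rank $2n-1$) and an overall sign normalisation, this recovers (R2).

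To conclude, let $R:=\Z[e_1,\dots,e_{2n-1}]/\bigl((R1)_{3\le r\le 2n},(R2)\bigr)$. By the previous paragraph there is a graded surjection $\pi\colon R\twoheadrightarrow\H^*(\IGl,\Z)$. Granting that the $2n-1$ defining relations, which occupy degrees $3,4,\dots,2n$ with one extra relation in degree $2n$, form a regular sequence in $\Z[e_1,\dots,e_{2n-1}]$, one obtains
\[
\mathrm{Hilb}_R(t)=\frac{(1-t^{2n})^2\prod_{r=3}^{2n-1}(1-t^r)}{\prod_{i=1}^{2n-1}(1-t^i)}=(1+t+\cdots+t^{2n-1})(1+t^2+t^4+\cdots+t^{2n-2}),
\]
which matches the Poincaré polynomial of $\H^*(\IGl,\Z)$ read off from the Schubert enumeration of \ref{sec:schub.odd} (and visible in the Hasse diagrams of \ref{sec:Hasse}). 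Equality of graded ranks in every degree then upgrades $\pi$ to an isomorphism; in particular the further vanishings (R1) for $r\ge 2n+1$, which also hold on $\IGl$, follow automatically from the listed relations.

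The principal obstacle is the regular-sequence check for $R$. I would handle it by a leading-monomial argument: with a graded monomial order on $\Z[e_1,\dots,e_{2n-1}]$ for which the initial term of $\det(e_{1+j-i})_{1\le i,j\le r}$ is $\pm e_r$ for $3\le r\le 2n-1$, the initial term of the $r=2n$ determinant is a suitable quadratic monomial, and the initial term of (R2) is $e_n^2$, the initial ideal is generated by $2n-1$ algebraically independent monomials defining a quotient of the correct Hilbert series, whence flatness of the Gr\"obner degeneration transfers regularity to the actual relations.
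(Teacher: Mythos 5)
Your derivations of \eqref{R1} (via $c_r(\mathcal S)=0$ for $r\geq 3$), of \eqref{R2} (via $\mathbf j^*$ and the kernel class of Lemma \ref{lem:restriction.2} — the paper instead pulls back the corresponding relation on $\IG(2,2n+2)$ via $\mathbf i^*$, but both work), of generation, and of the rank count $2n^2$ are all correct. The gap is in the final step, the regular-sequence check, which you yourself flag as the principal obstacle: the Gr\"obner degeneration you propose does not go through. With a term order making the initial term of $d_r$ equal to $\pm e_r$ for $3\leq r\leq 2n-1$, the monomial $e_n$ is already in the initial ideal as soon as $n\geq 3$, so the claimed initial term $e_n^2$ of \eqref{R2} is redundant, and the quadratic initial term of $d_{2n}$ (which involves $e_1,e_2$) cuts the remaining $\Z[e_1,e_2]$ only down to dimension one. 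Thus the monomial ideal you describe is \emph{not} zero-dimensional and cannot have the Hilbert series $(1+t+\cdots+t^{2n-1})(1+t^2+\cdots+t^{2n-2})$; equivalently, the leading terms of the given generators do not form a Gr\"obner basis of $I$, and no conclusion about regularity follows. (For $n=2$ the count happens to be right, which is perhaps why the plan looks plausible.)

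The paper sidesteps this: it invokes Lemma \ref{lem:reid} (from \cite{BKT}), for which one needs only the rank count \textbf{C1} (which you did) and the finiteness \textbf{C2} of $(A/I)\otimes K$ for every field $K$. For \textbf{C2} the paper shows, by a short generating-function identity in degree $2n+1$, that $d_{2n+1}$ lies in $I=(d_3,\dots,d_{2n},b_n)$; hence $A/I$ is a quotient of $A/(d_3,\dots,d_{2n+1})\cong\H^*(\G(2,2n+1),\Z)$, which is a free $\Z$-module of finite rank by Lemma \ref{lem:free}. If you want to keep a regularity-style argument instead, the workable version is to eliminate $e_3,\dots,e_{2n-1}$ via $d_3,\dots,d_{2n-1}$ (each $d_r=e_r+\cdots$), reduce $d_{2n}$ and $b_n$ to two homogeneous elements of $\Z[e_1,e_2]$, and check directly that those two are coprime over every field — which is essentially what the paper does in the proof of the second presentation (Proposition \ref{prop:pres.2}).

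As a side remark, the relation \eqref{R2} as displayed in the statement is missing the alternating sign; it should read $e_n^2+2\sum_{i\geq 1}(-1)^ie_{n+i}e_{n-i}=0$, consistent with the $b_n$ defined in the paper's proof and with your own telescoping computation.
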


\begin{proof}
First of all, the quotient bundle $\mathcal{Q}$ of $\IGl$ is the pullback by the restriction map $\mathbf{i}$ of the quotient bundle 
$\mathcal{Q}^{+}$ on $\IG(2,2n+2)$. So the $\mathbf{i}^*c_p(\mathcal{Q}^{+})=c_p(\mathcal{Q})=e_p$ for $1\leq p\leq 2n$ generate 
$\H^*\left(\IGl,\Z\right)$. But $\mathcal{Q}$ having rank $2n-1$, $\mathbf{i}^*c_{2n}(\mathcal{Q}^{+})=0$, hence the cohomology ring of $\IGl$ is 
generated by the $(e_p)_{1\leq p\leq 2n-1}$.
Then we follow the method from \cite[Thm. 1.2]{BKT} to obtain presentations for the isotropic Grassmannians. Consider the graded ring
$A:=\Z\left[a_1,\dots,a_{2n-1}\right]$, where $\deg a_i=i$. Set $a_0=1$, and $a_i=0$ if $i<0$ or $i>2n-1$. We also define $d_0:=1$ and 
$d_r:=\det\left(a_{1+j-i}\right)_{1\leq i,j\leq r}$ for $r>0$. For all $r\geq 0$, set $b_r:=a_r^2+2\sum_{i\geq 1}(-1)^i a_{r+i}a_{r-i}.$
Now let $\phi : A\longrightarrow \H^*(\IGl,\Z)$ be the degree-preserving morphism of graded rings sending $a_i$ to $e_i$ for all $1\leq i\leq 2n-1$.
Since the $e_p$ generate $\H^*\left(\IGl,\Z\right)$, this morphism is surjective. 
To prove that relations $\eqref{R1}$ and $\eqref{R2}$ are satisfied, we must check that $\phi(d_r)=0$ for all $r>2$ and $\phi(b_n)=0$.
\begin{itemize}
 \item[\eqref{R1}]
Expanding the determinant $d_r$ with respect to the first column, we get the identity
\[
 d_r=\sum_{i=1}^r (-1)^{i-1}a_i d_{r-i}.
\]
Hence the identity on formal series :
\begin{equation}
 \left(\sum_{i=0}^{2n-1}a_i t^i\right)\left(\sum_{i\geq 0}(-1)^i d_i t^i\right)=1. \label{R3}
\end{equation}
On $\IGl$ we have the following short exact sequence of vector bundles
\begin{equation*}
 \SEC{\mathcal{S}}{\mathcal{O}_{\IGl}}{\mathcal{Q}},
\end{equation*}
so $c(\mathcal{S})c(\mathcal{Q})=1$, where $c$ denotes the total Chern class. But 
\[
 c(\mathcal{Q}) =\sum_{i=0}^{2n-2}\tau_i t^i,
\]
so \eqref{R3} implies
\[
 c(\mathcal{S})=\sum_{i\geq 0}(-1)^i\phi(d_i) t^i.
\]
Since $\mathcal{S}$ has rank $2$, it follows that $\phi(d_r)=0$ for all $r>2$, hence the relations \eqref{R1}.
 \item[\eqref{R2}] From the presentation of $\IG(2,2n+2)$ in \cite[Thm. 1.2]{BKT}, we know that
\[
 \upsilon_n^2+2\sum_{i\geq 1}(-1)^i\upsilon_{n+i}\upsilon_{n-i}=0
\]
in $\IG(2,2n+2)$. Pulling back by $\mathbf{i}$ we get \eqref{R2}.
\end{itemize}
Now consider the Poincar\'e polynomial of $\IGl$ computed in \cite[§ 2.2.3]{mihai2} :
\[
 P\left(\IG(m,2n+1),q\right)=\frac{\prod_{i=1}^l\left(q^{2n+2-2i}-1\right)\prod_{i=l+1}^{m}\left(q^{2n+4-2i}-1\right)}{(q^m-1)(q^{m-1}-1)\dots(q-1)}
\]
for $m=2l$.
Evaluating this polynomial at $q=1$, we get that the rank of $\H^*\left(\IG(2,2n+1)\right)$ is $2n^2$.

As in the proof of \cite[Thm. 1.2]{BKT}, we will need the following lemma :
\begin{lem}
\label{lem:free}
The quotient of the graded ring $\Z\left[a_1,\dots,a_{d}\right]$ with $\deg a_i=i$ modulo the relations
\[
 \det\left(a_{1+j-i}\right)_{1\leq i,j\leq r}=0,m+1\leq r\leq m+d
\]
is a free $\Z$-module of rank $\binom{m+d}{d}$.
\end{lem}
To prove the previous lemma notice that the above presentation is nothing but the presentation of the cohomology ring of the usual Grassmannian
$\mathrm{G}(m,m+d)$.
Now to conclude the proof of the proposition we use the
\begin{lem}
\label{lem:reid}
Let $A=\Z\left[a_1,\dots,a_{d}\right]$ be a free polynomial ring generated by homogeneous elements $a_i$ such that $\deg a_i=i$. 
Let $I$ be an ideal in $A$ generated by homogeneous elements $c_1,\dots,c_d$ in $A$ and $\phi :A/I\longrightarrow H$ 
be a surjective ring homomorphism. Assume :
\begin{enumerate}[C1.]
\item \label{cond1} $H$ is a free $\Z$-module of rank $\prod_i\left(\frac{\deg c_i}{\deg a_i}\right)$.
\item \label{cond2} For every field $K$, the $K$-vector space $\left(A/I\right)\otimes_{\Z}K$ has finite dimension.
\end{enumerate}
Then $\phi$ is an isomorphism.
\end{lem}
This result was proven in \cite[Lem. 1.1]{BKT}. Apply it for
\[
 H=\H^*(\IGl,\Z),\;I=\left(d_{3},\dots,d_{2n},b_n\right)\mbox{, and }A,\phi\mbox{ as above.}
\]
Condition \ref{cond1} is an immediate consequence of the rank calculation. For Condition \ref{cond2} it is enough to prove that $A/I$ is a quotient of
$A/\left(d_3,\dots,d_{2n+1}\right)$. Indeed, by Lemma \ref{lem:free}, the last module is a free $\Z$-module of finite rank. So we are left with 
proving that $d_{2n+1}$ belongs to the ideal $I$. But the following identities of formal series hold :
\begin{align*}
\left(\sum_{i=0}^{2n-1} a_i t^i\right)\left(\sum_{i=0}^{2n-1}(-1)^i a_i t^i\right) & =  \sum_{i=0}^{2n-1}(-1)^i b_i t^{2i}, \\
\left(\sum_{i=0}^{2n-1}(-1)^i a_i t^i\right)\left(\sum_{i\geq 0}d_i t^i\right) & =  1.
\end{align*}
Hence we get :
\[
 \sum_{i=0}^{2n-1} a_i t^i=\left(\sum_{i=0}^{2n-1}(-1)^i b_i t^{2i}\right)\left(\sum_{i\geq 0}d_i t^i\right).
\]
Modding out by the ideal $I$, it yields :
\[
 \sum_{i=0}^{2n-1} a_i t^i\equiv\left(\sum_{i=0}^{n-1}(-1)^i b_i t^{2i}+\sum_{i=n+1}^{2n-1}(-1)^i b_i t^{2i}\right)\left(\sum_{i=0}^2 d_i t^i+\sum_{i\geq 2n+1}d_i t^i\right).
\]
In degree $2n+1$, we get $0\equiv d_{2n+1}$, which ends the proof of the proposition.
\end{proof}

\subsubsection{Presentation in terms of $\tau_1$ and $\tau_{1,1}$}

First we will need a presentation for the symplectic Grassmannian $\IG(2,2n)$ in terms of $\upsilon_1$ and $\upsilon_{1,1}$ :
\begin{prop}
 \label{prop:pres.sympl}
The ring $\H^*\left(\IG(2,2n),\Z\right)$ is ge\-nerated by the classes $\upsilon_1$, $\upsilon_{1,1}$ and the relations are
\begin{align*}
\frac{1}{\upsilon_1}\det\left(\upsilon_{1^{1+j-i}}\right)_{1\leq i,j\leq 2n-1} &= 0,\\
\det\left(\upsilon_{1^{1+j-i}}\right)_{1\leq i,j\leq 2n} &= 0
\end{align*}
\end{prop}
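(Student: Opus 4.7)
The proof parallels that of Proposition \ref{prop:pres.1}. Set $A := \Z[a_1, a_2]$ with $\deg a_i = i$, with the conventions $a_0 = 1$ and $a_p = 0$ for $p<0$ or $p>2$, and define $d_r := \det(a_{1+j-i})_{1 \leq i,j \leq r}$. Expanding along the first column yields the recurrence $d_r = a_1 d_{r-1} - a_2 d_{r-2}$, which in particular shows that $a_1$ divides $d_{2k+1}$ in $A$. Let $\phi : A \to \H^*(\IG(2,2n), \Z)$ be the graded ring morphism sending $a_1 \mapsto \upsilon_1$, $a_2 \mapsto \upsilon_{1,1}$. Surjectivity of $\phi$ follows from the analogue for $\IG(2,2n)$ of the Giambelli formula of \ref{sec:giambelli}, which expresses every Schubert class as a polynomial in $\upsilon_1$ and $\upsilon_{1,1}$.

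I then verify the two relations. The recurrence gives $(1 - a_1 t + a_2 t^2)\sum_{r\geq 0} d_r t^r = 1$ in $A[[t]]$, and combined with $c(\mathcal{S})(t) = 1 - \upsilon_1 t + \upsilon_{1,1} t^2$ this identifies $\phi(d_r) = c_r(\mathcal{Q})$. Thus $\phi(d_{2n}) = 0$ because $\mathcal{Q}$ has rank $2n-2$. For the first relation, I introduce the symplectic subquotient $\mathcal{U} := \mathcal{S}^\perp/\mathcal{S}$ of rank $2n-4$, which fits into the exact sequence
\begin{equation*}
    \SEC{\mathcal{U}}{\mathcal{Q}}{\mathcal{S}^\vee}.
\end{equation*}
Multiplying the series above by $1 + a_1 t + a_2 t^2$ and using $(1 - a_1 t + a_2 t^2)(1 + a_1 t + a_2 t^2) = 1 - (a_1^2 - 2a_2)t^2 + a_2^2 t^4$ yields the factorization
\begin{equation*}
    \sum_{r\geq 0} d_r t^r = \frac{1 + a_1 t + a_2 t^2}{1 - (a_1^2 - 2a_2)t^2 + a_2^2 t^4}
\end{equation*}
in $A[[t]]$. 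Extracting the coefficient of $t^{2n-1}$ shows that $d_{2n-1} = a_1 C_{n-1}$, where $C_k$ denotes the coefficient of $t^{2k}$ in $1/(1 - (a_1^2 - 2a_2) t^2 + a_2^2 t^4)$. Since $c(\mathcal{U}) = c(\mathcal{Q})/c(\mathcal{S}^\vee)$, applying $\phi$ gives $\phi(C_k) = c_{2k}(\mathcal{U})$, hence $\phi(d_{2n-1}/a_1) = c_{2n-2}(\mathcal{U})$, which vanishes because $\mathcal{U}$ has rank $2n-4$.

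To conclude, I apply Lemma \ref{lem:reid} with $H = \H^*(\IG(2,2n), \Z)$ and $I = (d_{2n-1}/a_1, d_{2n})$. Condition \ref{cond1} requires $H$ to have rank $\frac{(2n-2)(2n)}{1 \cdot 2} = 2n(n-1)$, verified either by enumerating $(n-2)$-strict partitions $(\lambda_1, \lambda_2)$ with $\lambda_1 \leq 2n-2$ or from the Poincar\'e polynomial of $\IG(2,2n)$. For Condition \ref{cond2}, the ideal $I$ contains $(d_{2n-1}, d_{2n})$ since $d_{2n-1} = a_1 (d_{2n-1}/a_1)$, so $A/I$ is a quotient of $A/(d_{2n-1}, d_{2n})$; by Lemma \ref{lem:free} applied with $m = 2n-2$ and $d = 2$, the latter is free of finite rank over $\Z$, so $(A/I)\otimes_\Z K$ is finite-dimensional over every field $K$. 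The main technical point is the polynomial identity $d_{2n-1}/a_1 = C_{n-1}$: via $\phi$ it encodes the symplectic isotropy of $\mathcal{S}$ and produces a relation in degree $2n-2$ that is strictly stronger than the tautological $c_{2n-1}(\mathcal{Q}) = 0$.
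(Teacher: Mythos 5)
Your proof is correct and arrives at the same conclusion as the paper, but the key step—establishing that the degree-$(2n-2)$ relation $\phi(d_{2n-1}/a_1)=0$ holds—is proved by a genuinely different argument. The paper, after observing $\phi(\delta_{2n-1})=0$ from $\mathrm{rk}\,\mathcal{Q}=2n-2$, invokes Lefschetz: since $\IG(2,2n)$ is a hyperplane section of $\G(2,2n)$, cup product with $\upsilon_1$ from $\H^{2n-2}$ to $\H^{2n-1}$ is surjective, hence bijective by a rank count, and then $\upsilon_1\cdot\phi(\delta_{2n-1}')=0$ forces $\phi(\delta_{2n-1}')=0$. You instead factor the generating series as $\sum d_r t^r = (1+a_1t+a_2t^2)\bigl(1-(a_1^2-2a_2)t^2+a_2^2t^4\bigr)^{-1}$, identify $\phi(d_{2n-1}/a_1)$ with the degree-$(2n-2)$ Chern class of the symplectic subquotient $\mathcal{U}=\mathcal{S}^\perp/\mathcal{S}$, and conclude from $\mathrm{rk}\,\mathcal{U}=2n-4$. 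Your route stays entirely within the Chern-class formalism already used for the second relation, needs no topological input (Lefschetz and the ancillary rank count in middle degrees), and gives a transparent geometric reading of the extra relation as the rank bound on $\mathcal{S}^\perp/\mathcal{S}$, which is exactly the isotropy condition cutting out $\IG(2,2n)$ inside $\G(2,2n)$. The paper's Lefschetz argument is shorter to state but imports a nontrivial theorem and requires knowing the Betti numbers in the two middle degrees. Both are valid; the remainder of your proof (surjectivity of $\phi$, the two applications of Lemma~\ref{lem:reid}'s conditions via Lemma~\ref{lem:free} and the rank $2n(n-1)$) matches the paper.
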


\begin{proof}
We will use Lemma \ref{lem:reid}.
 Set $R:=\Z\left[a_1,a_2\right]$, where $\deg a_i=i$. We denote by $\phi : R\ra \H^*\left(\IG(2,2n),\Z\right)$ the surjective ring 
homomorphism given by $a_i\mapsto\tau_{1^i}$. We also use the convention that $a_0=1$ and $a_i=0$ for $i\not\in\left\{0,1,2\right\}$. 
For $r\geq 1$, set $\delta_r:=\det\left(a_{1+j-i}\right)_{1\leq i,j\leq r}$. 
We have the recurrence relation
\begin{equation}
\label{eq:rec.delta}
 \delta_r=a_1\delta_{r-1}-a_2\delta_{r-2},
\end{equation}
which is equivalent to the identity of formal series
\[
 \left(\sum a_i t^i\right) \left(\sum (-1)^i\delta_i t^i \right) =1.
\]
But $\phi(a_i)=\tau_{1^i}=c_i(\mathrm{S}^*)$. Moreover, as
\[
 \SEC{\mathrm{S}^\perp}{\mathcal{O}_{\IG}}{\mathrm{S}^*},
\]
where we denote by $\mathrm{S}$ the tautological bundle on $\IG(2,2n)$,
we have $c(\mathrm{S}^\perp)c(\mathrm{S}^*)=1$, hence $\delta_r=c_r\left( (\mathrm{S}^\perp)^*\right)=c_r(\mathrm{Q})$ ($\mathrm{Q}$ being the 
quotient bundle on $\IG(2,2n)$). Since $\mathrm{Q}$ has rank $2n-2$, we have $\phi(\delta_r)=0$ for all $r>2n-2$, and in particular we get 
$\phi(\delta_{2n-1})=\phi(\delta_{2n})=0$. 
We can write $\delta_{2q+1}$ as 
\[
 \delta_{2q+1}=a_1 P_q(a_1,a_2),
\]
where $P_q(a_1,a_2)$ is a homogeneous polynomial of degree $2q$. Now set $\delta_{2q+1}':=P_q(a_1,a_2)$. We want to prove that 
$\phi(\delta_{2n-1}')=0$. For this, since $\IG(2,2n)$ is a hyperplane section of the usual Grassmannian $\mathrm{G}(2,2n)$, we use Lefschetz's 
theorem. In particular, we obtain that the multiplication by the hyperplane class $\upsilon_1$ is surjective from $\H^{2n-2}(\IG(2,2n),\Z)$ to 
$\H^{2n-1}(\IG(2,2n),\Z)$. But these vector spaces have the same dimension $n-1$, so it is bijective. As we already know that $\phi(\delta_{2n-1})=0$ it 
implies that $\phi(\delta_{2n-1}')=0$.
Now let $I:=(\delta_{2n-1}',\delta_{2n})$. We proved that $\phi(I)=0$ so we may define 
$\overline{\phi} : R\slash I\ra\H^*\left(\IG(2,2n),\Z\right)$. Now check that Conditions $\ref{cond1}$ and $\ref{cond2}$ are satisfied :
\begin{enumerate}
 \item[(C1)] $\H^*\left(\IG(2,2n),\Z\right)$ is a free $\Z$-module of rank $2n(n-1)=\frac{\deg(d_{2n-1})'\deg(d_{2n})}{\deg a_1\deg a_2}$.
 \item[(C2)] For every field $K$, $\left( R\slash I\right)\otimes_{\Z} K$ is finite-dimensional. Indeed $R\slash I$ is a quotient of 
$R\slash (d_{2n-1},d_{2n})$, which is isomorphic with $\H^*\left(\mathrm{G}(2,2n),\Z\right)$, hence a free $\Z$-module of finite rank.
\end{enumerate}
Finally Lemma \ref{lem:reid} yields that $\overline{\phi}$ is an isomorphism, hence the result.
\end{proof}

Now we deduce a presentation of $\H^*\left(\IGl,\Z\right)$ using classes $\tau_1$ and $\tau_{1,1}$ :
\begin{prop}[another presentation of $\H^*\left(\IGl,\Z\right)$]
\label{prop:pres.2}
The ring $\H^*\left(\IGl,\Z\right)$ is ge\-nerated by the classes $\tau_1$, $\tau_{1,1}$ and the relations are
\begin{align*}
\det\left(\tau_{1^{1+j-i}}\right)_{1\leq i,j\leq 2n} &= 0,\\
\frac{1}{\tau_1}\det\left(\tau_{1^{1+j-i}}\right)_{1\leq i,j\leq 2n+1} &= 0
\end{align*}
\end{prop}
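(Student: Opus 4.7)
The plan is to adapt the proof of Proposition~\ref{prop:pres.sympl} by applying Lemma~\ref{lem:reid} to an ideal generated by $\delta_{2n}$ and an appropriate ``half'' of $\delta_{2n+1}$. First I would set $R:=\Z[a_1,a_2]$ with $\deg a_i=i$ (with the convention $a_0=1$ and $a_i=0$ for $i\notin\{0,1,2\}$) and define the graded ring morphism $\phi:R\to\H^*(\IGl,\Z)$ by $a_i\mapsto\tau_{1^i}$; surjectivity of $\phi$ follows immediately from the Giambelli formula just proved. Setting $\delta_r:=\det\left(a_{1+j-i}\right)_{1\leq i,j\leq r}$, expansion along the first column yields the recurrence $\delta_r=a_1\delta_{r-1}-a_2\delta_{r-2}$, from which $\delta_{2q+1}$ is seen inductively to be divisible by $a_1$; I then set $\delta'_{2n+1}:=\delta_{2n+1}/a_1\in R$, of degree $2n$.

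Next I would use the tautological exact sequence $\SEC{\SS}{\OO_{\IGl}^{\oplus(2n+1)}}{\mathcal{Q}}$ on $\IGl$, the identification $\tau_{1^i}=c_i(\SS^*)$, and the Whitney relation $c(\SS)c(\mathcal{Q})=1$. Exactly as in the proof of Proposition~\ref{prop:pres.sympl}, these combine to give $\phi(\delta_r)=c_r(\mathcal{Q})$, and since $\mathcal{Q}$ has rank $2n-1$ we deduce $\phi(\delta_r)=0$ for every $r\geq 2n$. In particular $\phi(\delta_{2n})=0$ and $\tau_1\cdot\phi(\delta'_{2n+1})=\phi(\delta_{2n+1})=0$.

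The main step is to upgrade the latter identity to $\phi(\delta'_{2n+1})=0$. The class $\phi(\delta'_{2n+1})$ sits in $\H^{2n}(\IGl,\Z)$, and enumerating $(n-2)$-strict partitions of weight $2n$ and $2n+1$ as in~\ref{sec:schub.odd} shows that both $\H^{2n}(\IGl,\Z)$ and $\H^{2n+1}(\IGl,\Z)$ are free $\Z$-modules of rank $n-1$. It therefore suffices to prove that $\tau_1$ acts surjectively from $\H^{2n}$ to $\H^{2n+1}$, since for free $\Z$-modules of equal finite rank a surjection is automatically bijective. For surjectivity I invoke Hard Lefschetz on the smooth projective K\"ahler variety $\IGl$ (of complex dimension $4n-3$) with K\"ahler class $\tau_1$: the cohomology is torsion-free thanks to the cellular decomposition, and the iterated cup product $\tau_1^5:\H^{2n-4}(\IGl,\Z)\to\H^{2n+1}(\IGl,\Z)$ is an isomorphism, so any $\gamma\in\H^{2n+1}(\IGl,\Z)$ can be written as $\tau_1^5\beta=\tau_1\cdot(\tau_1^4\beta)$ with $\tau_1^4\beta\in\H^{2n}(\IGl,\Z)$. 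This Lefschetz step is the main obstacle; compared with Proposition~\ref{prop:pres.sympl}, the relevant cohomological degree sits further above the middle of $\IGl$ and a higher power of the K\"ahler class is required.

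Finally, I would apply Lemma~\ref{lem:reid} to $A=R$, $I=(\delta_{2n},\delta'_{2n+1})$, $H=\H^*(\IGl,\Z)$ and the induced morphism $\overline{\phi}:R/I\to H$. Condition~(C1) follows from the Poincar\'e polynomial of \cite{mihai2}, giving $\mathrm{rk}_\Z H=2n^2=(\deg\delta_{2n})(\deg\delta'_{2n+1})/(\deg a_1\cdot\deg a_2)$. Condition~(C2) follows since $\delta_{2n+1}=a_1\delta'_{2n+1}\in(\delta'_{2n+1})$, so that $R/I$ is a quotient of $R/(\delta_{2n},\delta_{2n+1})\cong\H^*(\G(2,2n+1),\Z)$, which is a free $\Z$-module of finite rank by Lemma~\ref{lem:free}. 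The lemma then yields that $\overline{\phi}$ is an isomorphism, giving the stated presentation.
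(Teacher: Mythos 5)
Your argument is correct in its overall structure but takes a genuinely different route from the paper. The paper obtains $\phi(\delta'_{2n+1})=0$ by \emph{pulling back} the corresponding relation $\frac{1}{\upsilon_1}\det(\upsilon_{1^{1+j-i}})_{1\leq i,j\leq 2n+1}=0$ from $\IG(2,2n+2)$ (proved in Proposition~\ref{prop:pres.sympl}) along the surjective restriction $\mathbf{i}^*$; it does \emph{not} re-run a Lefschetz argument on $\IGl$ itself. You instead rebuild the Chern-class identity $\phi(\delta_r)=c_r(\mathcal{Q})$ on $\IGl$ from scratch and then apply Hard Lefschetz directly to $\IGl$ — precisely the argument the paper uses internally to Proposition~\ref{prop:pres.sympl} for the even case $\IG(2,2n)$. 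Your version is more self-contained (it does not invoke Proposition~\ref{prop:pres.sympl}), at the cost of a Lefschetz argument in a higher degree: on $\IG(2,2n)$ the relevant map $\upsilon_1:\H^{2n-2}\to\H^{2n-1}$ is the single step just above the middle, whereas on $\IGl$ you need $\tau_1^5$ to link the two sides. This trade-off is fine, and the Lemma~\ref{lem:reid} bookkeeping (C1 gives $2n^2$, C2 via the quotient of $\H^*(\G(2,2n+1))$) is correct.

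One point needs repair: you assert that Hard Lefschetz gives an isomorphism $\tau_1^5:\H^{2n-4}(\IGl,\Z)\to\H^{2n+1}(\IGl,\Z)$ and then deduce integral surjectivity of $\tau_1:\H^{2n}\to\H^{2n+1}$. Hard Lefschetz only provides an isomorphism over $\mathbb{Q}$ (or $\mathbb{R}$); over $\Z$ the conclusion can fail, so the surjectivity you claim is not established. Fortunately the conclusion you actually need is injectivity of $\tau_1:\H^{2n}(\IGl,\Z)\to\H^{2n+1}(\IGl,\Z)$, in order to cancel $\tau_1$ in $\tau_1\cdot\phi(\delta'_{2n+1})=0$. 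That does follow: Hard Lefschetz over $\mathbb{Q}$ gives that $\tau_1:\H^{2n}_{\mathbb{Q}}\to\H^{2n+1}_{\mathbb{Q}}$ is surjective, hence bijective since both have dimension $n-1$; and a homomorphism of free $\Z$-modules of finite rank that is injective after $\otimes\,\mathbb{Q}$ is injective over $\Z$. So you should replace the surjectivity-over-$\Z$ step by this rational-injectivity argument; the rest of the proof then goes through unchanged.

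Incidentally, the paper's own route is slightly more efficient because the relation on $\IG(2,2n+2)$ is already available; but it does ultimately rest on the same Lefschetz input, merely one Grassmannian over, so the two proofs are not as far apart as they first appear.
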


\begin{proof}
 First notice that $\tau_1$ and $\tau_{1,1}$ generate the cohomology ring of $\IGl$ since they are the pullbacks of the Chern classes of the dual 
tautological bundle over $\mathrm{G}(2,2n+1)$ by the surjective restriction map $\mathbf{j}$. Then define $R:=\Z\left[a_1,a_2\right]$, where 
$\deg a_i=i$. We denote by $\phi : R\ra \H^*\left(\IGl,\Z\right)$ the surjective ring homomorphism given by $a_i\mapsto\tau_{1^i}$. We also use 
the convention that $a_0=1$ and $a_i=0$ for $i\not\in\left\{0,1,2\right\}$. 
For $r\geq 1$, set $\delta_r:=\det\left(a_{1+j-i}\right)_{1\leq i,j\leq r}$. 
On $\mathrm{G}(2,2n+1)$ we know by the usual presentation (see for instance \cite[§ 3]{ST}) that
\[
 \det\left(\sigma_{1^{1+j-i}} \right)_{1\leq i,j\leq 2n}=0
\]
Now define $\delta_{2q+1}'$ as in the proof of Proposition \ref{prop:pres.sympl}. 
Using the embedding in the symplectic Grassmannian $\IG(2,2n+2)$, we get that $\phi(\delta_{2n+1}')=0$. Indeed, we only have to pull back 
the relation $\frac{1}{\upsilon_1}\det\left(\upsilon_{1^{1+j-i}}\right)_{1\leq i,j\leq 2n+1} = 0$ proven in Proposition \ref{prop:pres.sympl}.
Finally, set $I=(d_{2n},d_{2n+1}')$ and apply Lemma \ref{lem:reid}.
\end{proof}


\section{Quantum cohomology}

Our main goal in this section is to prove a quantum Pieri formula for $\IGl$. We denote the quantum product of two classes $\tau_\lambda$ and 
$\tau_\mu$ as $\tau_\lambda\star\tau_\mu$. The degree of the quantum parameter $q$ is equal to the index of $\IGl$, so $\deg q=2n$.
\begin{theo}[Quantum Pieri rule for $\IGl$]
\label{theo:qpieri}
\begin{align*}
 \tau_1\star\tau_{a,b}&=\begin{cases}
	\tau_{a+1,b}+\tau_{a,b+1} & \text{if $a+b\neq 2n-3$ and $a\neq 2n-1$,} \\
	\tau_{a,b+1}+2\tau_{a+1,b}+\tau_{a+2,b-1} & \text{if $a+b=2n-3$,} \\
	\tau_{2n-1,b+1}+q\tau_b & \text{if $a=2n-1$ and $0\leq b\leq 2n-3$,} \\
	q(\tau_{2n-1,-1}+\tau_{2n-2}) & \text{if $a=2n-1$ and $b=2n-2$.}
     \end{cases} \\
 \tau_{1,1}\star\tau_{a,b}&=\begin{cases}
	\tau_{a+1,b+1} & \text{if $a+b\neq 2n-4$, $2n-3$ and $a\neq 2n-1$,} \\
	\tau_{a+1,b+1}+\tau_{a+2,b} & \text{if $a+b=2n-4$ or $2n-3$,} \\
	q\tau_{b+1} & \text{if $a=2n-1$ and $b\neq 2n-3$,} \\
	q(\tau_{2n-1,-1}+\tau_{2n-2}) & \text{if $a=2n-1$ and $b=2n-3$.}
     \end{cases}\end{align*}
\end{theo}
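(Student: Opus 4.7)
The plan is to split each product $\tau_\kappa\star\tau_{a,b}$ (for $\kappa=1$ or $(1,1)$) into its $q^d$-components and to show that only the classical part ($d=0$) and a single quantum correction ($d=1$) can contribute. The virtual dimension of $\Mod{3}{d}$ is $\dim\IGl+2nd=4n-3+2nd$, so a nonzero invariant $\langle\tau_\kappa,\tau_{a,b},\dual{\tau_{c,d}}\rangle_d$ forces the three insertions to have total codimension $4n-3+2nd$; with $\codim\tau_\kappa\leq 2$ and each of the other two codimensions bounded by $4n-3$, this is impossible for $d\geq 2$. The $d=0$ part is exactly the classical Pieri of Proposition \ref{prop:pieri.1}, so the heart of the argument is to pin down the $d=1$ corrections.

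For $\tau_1$, the divisor axiom gives
\[
\langle\tau_1,\tau_{a,b},\dual{\tau_{c,d}}\rangle_1=\langle\tau_{a,b},\dual{\tau_{c,d}}\rangle_1,
\]
reducing the problem to counting lines in $\IGl$ that meet generic representatives of $X_{a,b}$ and of the Poincaré dual of $X_{c,d}$. A line in $\IGl$ is a pencil $\{V_1\subset\Sigma\subset V_3\}$ of isotropic $2$-planes parametrised by $V_1\subset V_3\subset V_1^\perp$ with $\dim V_3=3$, so the variety of lines has a very concrete description. An incidence analysis on this variety shows that the count vanishes unless $a=2n-1$, i.e.\ unless $X_{a,b}$ is contained in the closed orbit $\O\cong\P{2n-1}$; this accounts for the absence of quantum corrections in all other cases. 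For $a=2n-1$ the counts are performed explicitly using the identification of $X_{2n-1,b}$ as a linear Schubert subvariety of $\O$, yielding the term $q\tau_b$ and, in the boundary case $b=2n-2$, the correction $q(\tau_{2n-1,-1}+\tau_{2n-2})$.

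For $\tau_{1,1}$ no divisor axiom is available, so the three-point invariants $\langle\tau_{1,1},\tau_{a,b},\dual{\tau_{c,d}}\rangle_1$ are computed directly by the same method: one pulls the three classes back along the evaluation maps $\mathrm{ev}_i:\Mod{3}{1}\ra\IGl$ and intersects generic translates. The identical codimension argument confines all quantum contributions to $a=2n-1$, and the counts there again reduce to explicit intersections inside $\O$. A useful consistency check comes from the classical identity $\tau_{1,1}=\tau_1\cup\tau_1-\tau_2$ combined with associativity of $\star$, which expresses $\tau_{1,1}\star\tau_{a,b}$ in terms of already computed products.

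The main obstacle is enumerativity. Because the $\Sp$-action on $\IGl$ has two orbits, Kleiman's lemma is unavailable and generic $\Sp$-translates of Schubert varieties need not meet the images of the $\mathrm{ev}_i$ transversely, so naive intersection counts can disagree with Gromov-Witten numbers. One therefore replaces the Schubert representatives by the more mobile test subvarieties introduced in Section \ref{sec:trans} and invokes Graber's transversality theorem \cite{graber} for quasi-homogeneous spaces, together with the smoothness of $\Mod{3}{1}$ established in Section \ref{sec:mod}. Carrying out this substitution uniformly, and separately handling the exceptional closed-orbit class $\tau_{2n-1,-1}$, is the technical core of the argument.
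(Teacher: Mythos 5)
Your overall plan does track the paper's: bound the degree by dimension counting, use the divisor axiom to reduce the $\tau_1$ case to two-point invariants, compute the $\tau_{1,1}$ case by three-point invariants in $\Mod{3}{1}$, and appeal to Graber's transversality theorem in place of Kleiman because $\IGl$ is only quasi-homogeneous. That skeleton is exactly what the paper does.

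However, there is a real misstep in the middle of the proposal. You first say the $a=2n-1$ counts can be ``performed explicitly using the identification of $X_{2n-1,b}$ as a linear Schubert subvariety of $\O$,'' and only afterwards append the caveat that Schubert representatives must be replaced. These two statements are in tension, and the first one is wrong as stated: $X_{2n-1,b}$ is \emph{contained} in the closed orbit $\O$, so it does not intersect the orbit stratification properly (the expected codimension of $X_{2n-1,b}\cap\O$ in $X_{2n-1,b}$ is $2n-2$, but the actual codimension is $0$). Hence the hypotheses of Theorem \ref{theo:enum} fail for this representative, and a count of lines through $g\cdot X_{2n-1,b}$ simply is not the Gromov--Witten invariant. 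This is not a detail one patches afterwards; in the paper the entire computation is carried out from the start with the auxiliary varieties $Y_{i,j}$ (intersections of $\IGl$ with Schubert varieties of $\G(2,2n+1)$), which do meet $\O$ generically transversely by Lemma \ref{lem:cycle}. Starting with a Schubert representative and then ``substituting'' midway is not a coherent plan.

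A second gap: because $[Y_{i,j}]^{\IG}$ is a sum of two Schubert classes (Lemma \ref{lem:cycle}), the line counts of Propositions \ref{prop:mod.2} and \ref{prop:mod.3} give values of $I_1([Y_{i,j}],[Y_{i',j'}])$, not directly $I_1(\tau_{a,b},\tau_{c,d})$. Extracting the pure Schubert invariants requires the small triangular recursion in $j$ and $l$ that the paper's proof of Theorem \ref{theo:qpieri} carries out, plus Poincar\'e duality (which is itself non-self-dual here, Proposition \ref{prop:poincare}). Your proposal neither names this inversion step nor indicates that anything needs to be unwound; the assertion that the incidence analysis ``yields the term $q\tau_b$'' skips precisely this. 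The incidence analysis itself (the case-by-case determination of the unique line through $Y_1$, $Y_2$, and $Y_3$, under the genericity conditions of Lemmas \ref{lem:gen.2} and \ref{lem:gen.3}) is also asserted rather than argued, but that is a reasonable thing to defer in a plan; the enumerativity confusion and the missing inversion are the substantive gaps.
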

The previous theorem is proved in \ref{sec:qpieri}, and from this a quantum presentation is deduced in \ref{sec:qpres}.
To prove the quantum Pieri formula, we first study in \ref{sec:mod} the moduli spaces of stable maps of degree $1$ to $\IGl$. Then in \ref{sec:trans} we decribe conditions for the Gromov-Witten invariants to have enumerative meaning. Finally, in 
\ref{sec:mod.2} and \ref{sec:mod.3} we compute the invariants we need. From now on, we denote $\IGl$ by $\IG$.

\subsection{The moduli spaces $\Mod{r}{1}$}
\label{sec:mod}

If $X$ is a smooth projective variety we denote by $\overline{\mathcal{M}}_{g,n}\left(X,\beta\right)$ the moduli space of stable $n$-pointed maps 
$f$ in genus $g$ to $X$ with degree $\beta\in\H_2(X,\Z)$. This moduli space is endowed with $n$ evaluation maps $(ev_i)_{1\leq i\leq n}$ mapping a stable map $f$ to its value at the $i^\text{th}$ marked point. We refer to \cite{FP} for more details. If $X$ has Picard rank $1$, which is the case when $X=\IGl$, then $\beta=d H$ for some $d\geq 0$, $H$ being the positive generator of the Picard group. In this situation, we will simply denote the degree as the integer $d$. In this section we prove the
\begin{prop}
\label{prop:mod}
For every $r \in \N$, the moduli space $\Mod{r}{1}$ is a smooth projective variety. Moreover, it has the expected dimension $6n-6+r$.
\end{prop}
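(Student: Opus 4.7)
The strategy is to invoke the standard deformation theory of Kontsevich moduli spaces (see~\cite{FP}): if $H^1(C, f^* T_{\IG}) = 0$ for every stable map $[f : C \to \IG]$ in $\Mod{r}{1}$, then $\Mod{r}{1}$ is smooth at $[f]$ and has the expected dimension $\dim\IG - 3 + r + 2n = 6n - 6 + r$, where $2n$ is the index of $\IG$. Projectivity is automatic from the Kontsevich construction, so the proposition reduces to this vanishing.

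For an irreducible source $C \cong \P{1}$, the map $f$ identifies $C$ with a line $L \subset \IG$. Using the embedding $\mathbf{j}: \IG \hookrightarrow \G(2, 2n+1)$ from~\ref{sec:embed.A}, the normal bundle sequence restricted to $L$ reads
\[
\SEC{T_{\IG}|_L}{T_{\G(2,2n+1)}|_L}{N|_L},
\]
with $N = N_{\IG/\G(2,2n+1)} \cong \OO_{\IG}(1)$ since $\IG$ is the zero locus in $\G(2,2n+1)$ of a section of $\det\mathcal{S}^\vee \cong \OO(1)$ (induced by the symplectic form). A direct calculation with the tautological sequence on $\G(2,2n+1)$ restricted to $L$ yields
\[
T_{\G(2,2n+1)}|_L \cong \OO_L(2) \oplus \OO_L(1)^{\oplus(2n-1)} \oplus \OO_L^{\oplus(2n-2)}.
\]
Smoothness of $\IG$ (Proposition~\ref{prop:mihai.1}) ensures that the defining section is transverse to zero, so $T_{\G(2,2n+1)}|_L \twoheadrightarrow N|_L \cong \OO_L(1)$ is fibrewise surjective and $T_{\IG}|_L$ is a genuine rank $4n-3$ subbundle. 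Since $\mathrm{Hom}(\OO(2), \OO(1)) = 0$ on $\P{1}$, the $\OO_L(2)$ summand maps to zero and sits inside the kernel; the remaining piece is the kernel of a fibrewise surjection $\OO_L(1)^{\oplus(2n-1)} \oplus \OO_L^{\oplus(2n-2)} \twoheadrightarrow \OO_L(1)$, whose summands must lie in $\{-1, 0, 1\}$. Hence every summand of $T_{\IG}|_L$ has degree $\geq -1$, which forces $H^1(L, T_{\IG}|_L) = 0$.

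For a possibly reducible source, write $C = C_0 \cup C_1 \cup \dots \cup C_k$, where $C_0$ maps isomorphically onto a line $L \subset \IG$ and each $C_j$ with $j \geq 1$ is contracted. On every contracted $C_j$, $f^* T_{\IG}$ is a trivial bundle of rank $4n - 3$, hence globally generated with $H^1 = 0$. Since every node of $C$ is adjacent to at least one contracted component, the Mayer--Vietoris sequence associated with the normalization of $C$ gives vanishing nodal coboundary terms: the restriction of $f^* T_{\IG}$ to at least one branch at each node is globally generated, so the evaluation maps at the nodes are surjective. Combined with the previous paragraph, this yields $H^1(C, f^* T_{\IG}) = 0$ for every point of $\Mod{r}{1}$, completing the verification.

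The main technical step is the fibrewise surjectivity of $T_{\G(2,2n+1)}|_L \twoheadrightarrow N|_L$ along every line in $\IG$, which is essentially a rephrasing of the smoothness of $\IG$ inside $\G(2,2n+1)$. The non-homogeneity of $\IG$ itself plays no role here: all the positivity needed to run the deformation theory is inherited from the ambient homogeneous Grassmannian through the hyperplane-section presentation.
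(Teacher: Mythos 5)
Your argument is correct, but it follows a genuinely different route from the paper's. The paper exhibits $\Mod{r}{1}$ as a locally trivial fibration over the variety $Y_1$ of lines in $\IG$, with Fulton--MacPherson fibre $\P{1}[r]$, and then establishes smoothness of $Y_1$ by realising it inside the two-step flag variety $\mathrm{F}(1,3;2n+1)$ as the zero locus of the section $\omega$ of the globally generated homogeneous bundle $\mathcal{S}_1^*\otimes(\mathcal{S}_3/\mathcal{S}_1)^*$; the dimension is then read off from the open $\Sp$-orbit of $Y_1$. You bypass the variety of lines entirely and verify the unobstructedness criterion $H^1(C,f^*T_{\IG})=0$ directly over all stable maps, using the hyperplane-section presentation $\IG\hookrightarrow\G(2,2n+1)$ to pin down the Grothendieck splitting of $T_{\G(2,2n+1)}|_L$ and to force every summand of $T_{\IG}|_L$ to have degree $\geq -1$; the reducible case then reduces to this via the normalization sequence, since $f^*T_{\IG}$ is trivial on contracted components. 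Both arguments are valid: the paper's gives an explicit geometric picture of $\Mod{r}{1}$ as a fibration over a concrete zero locus, while yours is a more generic positivity argument that needs only the hyperplane-section embedding and the splitting type on lines. Two small points worth tightening in your write-up: the form $\omega$ on $V$ is degenerate of corank one, not symplectic, though the induced section of $\det\mathcal{S}^\vee$ on $\G(2,2n+1)$ still cuts out $\IG$ and transversality is precisely the smoothness of $\IG$; and in the reducible case one should justify the \emph{joint} surjectivity of $\bigoplus_j H^0\bigl(C_j,f^*T_{\IG}|_{C_j}\bigr)\to\bigoplus_p\bigl(f^*T_{\IG}\bigr)_p$, for instance by peeling off leaf components of the dual tree (each carrying a trivial, hence globally generated, restriction), since per-node surjectivity alone is not formally sufficient, although the tree structure makes the argument go through.
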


\begin{proof}
To prove this, we use a remark of Fulton and Pandharipande in \cite[§ 0.4]{FP}, which states that for all $r \geq 1$, the moduli space $\M_{0,r}(\P{m},1)$ is a locally trivial fibration over the variety $\G(\P{1},\P{m})$ of lines in $\P{m}$, having $\M_{0,r}(\P{1},1)$ as a fiber. Moreover, this last moduli space is isomorphic to the configuration space $\P{1}[r]$ of Fulton-MacPherson. The fibration is simply the map
\[
\begin{array}{ccc}
 \M_{0,r}(\P{m},1) & \ra & \G(\P{1},\P{m}) \\
 \left[ f : (C;p_1,\dots,p_r) \ra \P{m} \right] & \mapsto & f(C)
\end{array}
\]
which to any stable map of degree one associates its image line.

The Plücker embedding embeds $\IG$ as a closed subvariety of a projective space $\P{m}$ (with $m=(2n-1)(n+1)$). Under this embedding, lines in $\IG$ are lines in $\P{m}$. From \cite[§ 5.1]{FP}, we know that this yields an embedding $\Mod{r}{1} \hookrightarrow \M_{0,r}(\P{m},1)$. If we denote by $Y_1$ the variety of lines on $\IG$, we get a commutative diagram :

\begin{center}
 \begin{tikzpicture}
  \matrix [matrix of math nodes,row sep=1cm, column sep=1cm,text height=1.5ex,text depth=0.25ex]
  {
	 |(M1)| \overline{\mathcal{M}}_{0,r}(\mathbb{P}^m,1) & |(G)| \mathrm{G}(\mathbb{P}^1,\mathbb{P}^m) \\
	 |(M2)| \overline{\mathcal{M}}_{0,r}(\mathrm{IG},1) & |(Y1)| Y_1 \\
};
\path[->]
(M1) edge node[auto] {} (G)
(M2) edge node[auto] {} (Y1)
;
\path[right hook->]
(M2) edge node[auto] {} (M1)
(Y1) edge node[auto] {} (G)
;
\end{tikzpicture}
\end{center}

Hence the map $\Mod{r}{1} \ra Y_1$ is also locally trivial. Since the fiber $\P{1}[r]$ is known to be smooth, we only need to prove that the variety of lines $Y_1$ is also smooth. 

First notice that lines in $\IG$ are of the form
\[
 \mathcal{D}(U_1,U_3) := \left\{ \Sigma \in \IGl \mid U_1 \subset \Sigma \subset U_3 \right\}
\]
where $\dim U_i = i$ and $U_3\subset U_1^\perp$. Hence $Y_1$ is a subvariety of the (type $A$) flag variety $\mathrm{F}(1,3 ; 2n+1)$. Let us denote by $\SS_1$ and $\SS_3$ the tautological bundle on $\mathrm{F}(1,3;2n+1)$ and consider the homogeneous vector bundle $\mathcal{E}:=\SS_1^*\otimes\left(\SS_3/\SS_1\right)^*$ on $\mathrm{F}(1,3;2n+1)$. Let also $\pi$ be the projection map from the complete flag variety $\mathrm{F}(\C^{2n+1})$ to the two-step flag variety $\mathrm{F}(1,3;2n+1)$. If we denote by $\UU_1,\dots,\UU_{2n+1}$ the tautological bundles on $\mathrm{F}(\C^{2n+1})$, we see that $\mathcal{E}=\pi_*\left(\UU_1^{-1}\otimes\left(\UU_2/\UU_1\right)^{-1}\right)$. Hence
\[
 \H^0\left(\mathrm{F}(1,3;2n+1),\mathcal{E}\right) = \H^0\left(\mathrm{F}(\C^{2n+1}),\UU_1^{-1}\otimes\left(\UU_2/\UU_1\right)^{-1}\right) = {\bigwedge}^2 \left(\C^{2n+1}\right)^*,
\]
the last equality being a consequence of the Borel-Weil theorem. This implies that the form $\omega$ is a generic section of the vector bundle $\SS_1^*\otimes\left(\SS_3/\SS_1\right)^*$ on $\mathrm{F}(1,3;2n+1)$. From the condition $U_3\subset U_1^\perp$, it follows that the zero locus of this section is exactly the variety $Y_1$. Moreover, the vector bundle $\mathcal{E}$ is generated by its global sections. Hence $Y_1$ is smooth, and so is $\Mod{r}{1}$. 

Finally, the dimension of $Y_1$ is equal to the dimension of its open orbit under the action of the odd symplectic group $\Sp$ :
\[
 \OO := \left\{ (U_1,U_3) \mid U_3\not\supset K, U_3 \text{ not isotropic} \right\}.
\]
Since $\dim \OO = \dim \G(1,2n+1) + \dim \G(2,U_1^\perp/U_1) = 6n-6$, it follows that $\dim Y_1 = 6n-6$, and finally $\dim \Mod{r}{1} = \dim Y_1 + \dim \P{1}[r] = 6n-6+r$.
\end{proof}

\subsection{Enumerativity of the invariants in $\Mod{2}{1}$ and $\Mod{3}{1}$}
\label{sec:trans}

In this section we will use a Kleiman-type lemma for quasi-homogeneous spaces, due to Graber in \cite[Lem. 2.5]{graber} :
\begin{lem}
\label{lem:graber}
Let $X$ be a variety endowed with an action of a connected algebraic group $G$ with only finitely many orbits and $Z$ an irreducible scheme with a 
morphism $f : Z\ra X$. Let $Y$ be a subvariety of $X$ that intersects the orbit stratification properly. Then there exists a dense open subset 
$U$ of $G$ such that $\forall g\in U$, $f^{-1}(g Y)$ is either empty or has pure dimension $\dim Y+\dim Z-\dim X$. Moreover, if $X$, $Y$ and $Z$ 
are smooth and we denote by $Y_{\mathrm{reg}}$ the subset of $Y$ along which the intersection with the stratification is transverse, then the 
(possibly empty) open subset $f^{-1}(g Y_{\mathrm{reg}})$ is smooth.
\end{lem}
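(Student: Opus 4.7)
The plan is to reduce the statement to the classical Kleiman transversality theorem \cite[Thm. 2]{kleiman} by exploiting the orbit stratification of $X$. Write $X = \bigsqcup_\alpha X_\alpha$ for the decomposition into $G$-orbits; by hypothesis this is a finite union, and the proper-intersection hypothesis controls the dimension of each trace $Y_\alpha := Y \cap X_\alpha$ inside the homogeneous space $X_\alpha$, namely $\codim_{X_\alpha} Y_\alpha = \codim_X Y$.

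First I would apply the classical Kleiman theorem on each orbit separately. Since $G$ acts transitively on $X_\alpha$, the theorem applies to the induced data $f_\alpha : f^{-1}(X_\alpha) \to X_\alpha$ and the subvariety $Y_\alpha \hookrightarrow X_\alpha$, producing a dense open $U_\alpha \subset G$ over which $f_\alpha^{-1}(gY_\alpha)$ has the Kleiman-expected dimension $\dim f^{-1}(X_\alpha) - \codim_X Y$. Since there are only finitely many orbits, the intersection $U := \bigcap_\alpha U_\alpha$ is still dense open in $G$. Because orbits are $G$-stable, one obtains a clean disjoint decomposition
\[
 f^{-1}(gY) = \bigsqcup_\alpha f^{-1}(gY_\alpha)
\]
for every $g \in U$.

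Next comes a dimension count to ensure purity. Using irreducibility of $Z$, there is exactly one orbit $X_{\alpha_0}$ with $\overline{f^{-1}(X_{\alpha_0})} = Z$; by the proper-intersection hypothesis the corresponding piece has pure dimension $\dim Y + \dim Z - \dim X$, as required. For any other orbit, consider the auxiliary incidence correspondence
\[
 W_\alpha := \bigl\{ (g,z) \in G \times Z \mid f(z) \in gY_\alpha \bigr\}.
\]
The transitivity of $G$ on $X_\alpha$ makes the map $G \times f^{-1}(X_\alpha) \to X_\alpha$, $(g,z) \mapsto g^{-1}f(z)$, smooth, so $W_\alpha$ is smooth of pure dimension $\dim G + \dim f^{-1}(X_\alpha) - \codim_X Y$. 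When $\dim f^{-1}(X_\alpha) < \dim Z$ this is strictly less than $\dim G + \dim Y + \dim Z - \dim X$, so the projection $W_\alpha \to G$ either has image a proper closed subset or has generic fiber of dimension strictly below $\dim Y + \dim Z - \dim X$; shrinking $U$ to avoid these images gives purity.

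Finally, for the smoothness statement, at a point of $Y_{\mathrm{reg}}$ the trace $Y_\alpha$ is smooth inside $X_\alpha$, so the smoothness clause of the Kleiman theorem on the orbit yields smoothness of $f_\alpha^{-1}(gY_{\mathrm{reg}} \cap X_\alpha)$ for generic $g$; taking a further finite intersection of dense opens suffices. The main obstacle lies in the dimension count of the third paragraph: one must carefully invoke the proper-intersection hypothesis to guarantee that the lower-dimensional strata contribute either nothing or the expected dimension for $g$ in a common dense open, since this is where the whole weight of the quasi-homogeneous setting is felt. Once this is established, the orbit-by-orbit application of the classical theorem assembles directly into Graber's statement.
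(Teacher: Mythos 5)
The paper itself supplies no proof of this lemma: it is quoted directly from Graber \cite[Lem. 2.5]{graber}, so there is no in-paper argument to compare against, and I will assess your proposal on its own terms.

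Your orbit-by-orbit scheme is the natural one, and your treatment of the dominant orbit is correct: since $Z$ is irreducible there is a single orbit $X_{\alpha_0}$ with $f^{-1}(X_{\alpha_0})$ dense in $Z$, so that $f^{-1}(X_{\alpha_0})$ is irreducible of dimension $\dim Z$, and Kleiman's theorem on $X_{\alpha_0}$ produces a dense open over which the piece of $f^{-1}(gY)$ lying over $X_{\alpha_0}$ is pure of dimension $\dim Y+\dim Z-\dim X$. The problem lies in what you say about the remaining strata. You correctly note that $W_\alpha\to G$ either fails to dominate or dominates with generic fiber of dimension strictly less than $\dim Y+\dim Z-\dim X$, but then you claim "shrinking $U$ to avoid these images gives purity." In the first case you can indeed shrink $U$; in the second case the image is dense, you cannot avoid it, and you are left with a nonempty locally closed piece of $f^{-1}(gY)$ of strictly smaller dimension sitting over $X_\alpha$. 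Such a piece breaks purity unless it lies in the closure of the piece over the open orbit, and your argument gives no reason for that containment. The way to close the gap is to add a lower bound that forces every component of $f^{-1}(gY)$ to have dimension at least $\dim Y+\dim Z-\dim X$: one writes $f^{-1}(gY)\cong(Z\times gY)\times_{X\times X}\Delta_X$ and uses that $\Delta_X\hookrightarrow X\times X$ is a regular embedding of codimension $\dim X$ — this is where smoothness of $X$ (which holds in Graber's application and in this paper, but which you never invoke) actually enters. The small pieces over the lower strata then cannot be separate components, and purity follows from the Kleiman upper bound you already have.

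There is a second, independent problem in the smoothness step. You propose to apply Kleiman's smoothness clause on each orbit with the data $f^{-1}(X_\alpha)\to X_\alpha$ and $Y_{\mathrm{reg}}\cap X_\alpha$, but Kleiman requires both cycles to be smooth, and even when $Z$ is smooth the locally closed subscheme $f^{-1}(X_\alpha)\subset Z$ is in general singular, or even nonreduced (preimages of smooth subvarieties under arbitrary morphisms are not smooth). The correct argument is global and is exactly what the definition of $Y_{\mathrm{reg}}$ is designed to make work: the action map $G\times Y_{\mathrm{reg}}\to X$, $(g,y)\mapsto gy$, is smooth because at each $y\in Y_{\mathrm{reg}}$ the tangent space to the $G$-orbit of $y$ together with $T_yY$ spans $T_yX$. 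Hence $(G\times Y_{\mathrm{reg}})\times_X Z\to Z$ is smooth, so its source is smooth when $Y$ and $Z$ are, and generic smoothness of the projection to $G$ (in characteristic $0$) gives smooth fibers $f^{-1}(gY_{\mathrm{reg}})$ for generic $g$. This is not an orbit-by-orbit argument and cannot be reduced to one.
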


This enables us to prove the following enumerativity result for degree one Gromov-Witten invariants on $\IGl$. 
\begin{theo}[Enumerativity of the Gromov-Witten invariants]
\label{theo:enum}
Let $r$ be a positive integer and $Y_1,\dots,Y_r$ be subvarieties of $\IG$ of codimension at least $2$ intersecting the closed orbit generically transversely and representing cohomology classes $\gamma_1,\dots,\gamma_r$  such that 
\[
 \sum_{i=1}^r\codim\gamma_i=\dim\Mod{r}{1}.
\]
Then there exists a dense open subset $U\subset \Sp^r$ such that for all $g_1,\dots,g_r\in U$, the Gromov-Witten invariant $I_1(\gamma_1,\dots,\gamma_r)$ is equal to the number of lines of $\IG$ incident to the translates $g_1 Y_1,\dots,g_r Y_r$.
\end{theo}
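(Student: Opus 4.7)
The plan is to apply Graber's Kleiman-type lemma (Lemma \ref{lem:graber}) to the total evaluation morphism
$$ev=(ev_1,\dots,ev_r):\Mod{r}{1}\longrightarrow\IG^r,$$
equipped with the natural action of the connected group $\Sp^r$ on $\IG^r$. Since $\Sp$ acts on $\IG$ with two orbits, $\Sp^r$ acts on $\IG^r$ with only finitely many orbits, so the setting of Graber's lemma applies. By Proposition \ref{prop:mod}, $\Mod{r}{1}$ is smooth of dimension $6n-6+r$; its irreducibility follows from the description as a locally trivial fibration over the variety of lines $Y_1$, which is itself irreducible since it contains a dense open $\Sp$-orbit. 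The product $Y:=Y_1\times\cdots\times Y_r\subset\IG^r$ meets the product of orbit stratifications properly because, by the generic transversality hypothesis, each $Y_i$ does.

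Graber's lemma then yields a dense open $U\subset\Sp^r$ such that for every $(g_1,\dots,g_r)\in U$ the preimage
$$\bigcap_{i=1}^{r}ev_i^{-1}(g_i Y_i)=ev^{-1}\bigl((g_1,\dots,g_r)\cdot Y\bigr)$$
is smooth of pure dimension
$$\dim\Mod{r}{1}+\dim Y-\dim\IG^r=(6n-6+r)-\sum_i\codim\gamma_i=0.$$
Such a preimage is a finite reduced set of points; by transversality the integral
$$I_1(\gamma_1,\dots,\gamma_r)=\int_{\Mod{r}{1}}\bigcup_{i=1}^{r}ev_i^*\gamma_i$$
counts exactly these points.

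The principal obstacle is to exclude intersection points supported on the boundary of $\Mod{r}{1}$, i.e. stable maps whose source is reducible. The boundary is a finite union of divisors $D_A$ indexed by the subsets $A\subset\{1,\dots,r\}$ recording which marked points sit on a contracted rational component. For each such $D_A$ one applies Lemma \ref{lem:graber} again to the restriction $ev|_{D_A}:D_A\to\IG^r$: the expected dimension of $D_A\cap ev^{-1}((g_i)\cdot Y)$ is
$$\dim D_A+\dim Y-\dim\IG^r=(6n-7+r)-(6n-6+r)=-1,$$
so this intersection is empty for $(g_i)$ in a dense open $U_A\subset\Sp^r$. Replacing $U$ with its intersection with the finitely many $U_A$, we may assume that the zero-dimensional preimage lies entirely in the interior of $\Mod{r}{1}$.

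Finally, each point of the preimage corresponds to an irreducible stable map $f:\P{1}\to\IG$ of degree one, with distinct marked points $p_i$ satisfying $f(p_i)\in g_i Y_i$; equivalently, to a line $L\subset\IG$ together with an incident point on each $g_i Y_i$. The hypothesis $\codim\gamma_i\geq 2$ guarantees that for generic translates a line meets each $g_i Y_i$ in at most one point, so lines incident to all the translates are in bijection with the points of the preimage. Therefore $I_1(\gamma_1,\dots,\gamma_r)$ counts exactly the lines of $\IG$ incident to $g_1 Y_1,\dots,g_r Y_r$, as claimed.
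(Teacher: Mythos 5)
Your overall strategy matches the paper's: both repeatedly apply Graber's transversality lemma (Lemma~\ref{lem:graber}) to the evaluation map $\underline{ev}\colon\Mod{r}{1}\to\IG^r$, using the action of $\Sp^r$. Your boundary argument (applying the lemma to each boundary component of $\Mod{r}{1}$ to rule out reducible-source contributions by a dimension count) is essentially the paper's first step, and the final dimension count $\dim\Mod{r}{1}+\dim Y-\dim\IG^r=0$ is the same. However, there are two concrete gaps.

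First, you conclude directly from Graber's lemma that the preimage is a ``finite \emph{reduced} set of points.'' But the reduced/smoothness part of Lemma~\ref{lem:graber} requires $Y$ to be smooth, and your $Y=Y_1\times\dots\times Y_r$ need not be (the hypotheses only ask the $Y_i$ to be subvarieties). Without that, the pure-dimension-$0$ conclusion gives a finite scheme which could a priori be non-reduced, and the integral $\int\bigcup ev_i^*\gamma_i$ could overcount. The paper fixes this with an extra application of Graber's lemma to $\mathrm{Sing}\,\underline{Y}$: since $\dim\mathrm{Sing}\,\underline{Y}<\dim\underline{Y}$, the expected dimension over the singular locus is negative, so for generic translates the preimage lands entirely in the smooth locus $\underline{Y}_{\mathrm{reg}}$, where the smoothness part of the lemma applies and the points are reduced. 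You also need a further application to ensure the preimage lies over the locus where $\underline{Y}$ meets the stratification \emph{transversely everywhere}, not just generically.

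Second, you assert that ``the hypothesis $\codim\gamma_i\geq 2$ guarantees that for generic translates a line meets each $g_iY_i$ in at most one point,'' but you do not prove it, and it is not immediate. The paper proves it with another application of Graber's lemma, this time to $\mathcal{M}_{0,r+1}^*(\IG,1)\to\IG^{r+1}$ with the cycle $Y_1\times Y_1\times Y_2\times\dots\times Y_r$: since $\dim\Mod{r+1}{1}=6n-5+r$, the expected fiber dimension is $1-\codim\gamma_1\leq-1$, hence empty. Without this step, a line meeting some $g_iY_i$ in two points would contribute twice to the preimage but only once to the count of incident lines, breaking the claimed bijection. Filling in these two applications of the lemma would complete your argument along exactly the paper's lines.
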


\begin{proof}
The result is proven by successively applying the Transversality lemma \ref{lem:graber}.
 First we prove that stable maps with reducible source do not contribute to the Gromov-Witten invariant by applying the lemma to the following 
diagram 
\begin{center}
\begin{tikzpicture}
  \matrix [matrix of math nodes,row sep=1cm, column sep=1cm,text height=1.5ex,text depth=0.25ex]
  {
	  & |(M)| \overline{\mathcal{M}}\setminus \mathcal{M}^{*}\\
	 |(Y)| \underline{Y} & |(IG)| \mathrm{IG}^r \\
};
\path[->]
(M) edge node[auto] {$\underline{ev}$} (IG)
;
\path[right hook->]
(Y) edge node[auto] {$i$} (IG)
;
\end{tikzpicture}
\end{center}
where $\underline{Y}=(Y_1,\dots,Y_r)$, $\underline{ev}=ev_1\times\dots\times ev_r$, $\overline{\mathcal{M}}=\Mod{r}{1}$ and $\mathcal{M}^{*}$ is 
the locus of map with irreducible source, which is a dense open subset by Proposition \ref{prop:mod}.

We should also prove that it is not possible for a line to be incident to one of the subvarieties $Y_i$ in more than one point, since such a line 
would contribute several times to the invariant. Suppose for example that there exists a line $L$ that intersects $Y_1$ in at least two points. Then any stable map $f$ whose image curve is $L$ corresponds to a map $\tilde{f}$ in $\Mod{r+1}{1}$ (in fact in $\mathcal{M}_{0,r+1}^*\left(\IG,1\right)$) that contributes to the invariant $I_1(\gamma_1,\gamma_1,\dots,\gamma_r)$. By Proposition \ref{prop:mod}, $\Mod{r+1}{1}$ has dimension $6n-5+r$. Hence applying Lemma \ref{lem:graber} to the following diagram
\begin{center}
\begin{tikzpicture}
  \matrix [matrix of math nodes,row sep=1cm, column sep=1cm,text height=1.5ex,text depth=0.25ex]
  {
	  & |(M)| \mathcal{M}_{0,r+1}^*\left(\mathrm{IG},1\right)\\
	 |(Y)| Y_1\times Y_1\times\dots\times Y_r & |(IG)| \mathrm{IG}^{r+1} \\
};
\path[->]
(M) edge node[auto] {$\underline{ev}$} (IG)
;
\path[right hook->]
(Y) edge node[auto] {$i$} (IG)
;
\end{tikzpicture}
\end{center}
and using the fact that $\codim\gamma_1\geq 2$ we conclude that such a line cannot exist.

Now using 
\begin{center}
\begin{tikzpicture}
  \matrix [matrix of math nodes,row sep=1cm, column sep=1cm,text height=1.5ex,text depth=0.25ex]
  {
	  & |(M)| \mathcal{M}^{*}\\
	 |(Y)| \mathrm{Sing}\;\underline{Y} & |(IG)| \mathrm{IG}^r \\
};
\path[->]
(M) edge node[auto] {$\underline{ev}$} (IG)
;
\path[right hook->]
(Y) edge node[auto] {$i$} (IG)
;
\end{tikzpicture}
\end{center}
where $\mathrm{Sing}\;\underline{Y}$ denotes the singular locus of $\underline{Y}$, we may assume that $\underline{Y}$ is smooth.
Moreover, since $Y_1,\dots,Y_r$ intersect the closed orbit generically transversely, a third application of Lemma \ref{lem:graber} allows us to 
assume that this intersection is transverse everywhere. 
Finally, applying the lemma to 
\begin{center}
\begin{tikzpicture}
  \matrix [matrix of math nodes,row sep=1cm, column sep=1cm,text height=1.5ex,text depth=0.25ex]
  {
	  & |(M)| \mathcal{M}^{*}\\
	 |(Y)| \underline{Y} & |(IG)| \mathrm{IG}^r \\
};
\path[->]
(M) edge node[auto] {$\underline{ev}$} (IG)
;
\path[right hook->]
(Y) edge node[auto] {$i$} (IG)
;
\end{tikzpicture}
\end{center}
we conclude that there exists a dense open subset $U\subset\Sp^r$ such that for all $(g_1,\dots,g_r)\in U$, $\bigcap_{i=1}^r ev_i^{-1}(g_i Y_i)$ is
a finite number of reduced points, which equals the number of lines incident to all the $g_i Y_i$.
\end{proof}

\begin{rem}
 Theorem \ref{theo:enum} enables us to compute the Gromov-Witten invariants by geometric means. However, Schubert varieties will not be appropriate to perform this calculation. Indeed, the intersection of any Schubert variety and the closed orbit is not even proper. So we will instead use the restrictions of the Schubert varieties of the usual Grassmannian.
\end{rem}

\subsection{Computation of the invariants in $\Mod{2}{1}$}
\label{sec:mod.2}

In this paragraph, we use Theorem \ref{theo:enum} to compute all invariants of $\IG$ of the form $I_1(\alpha,\beta)$, where $\alpha$ and $\beta$ are the classes of the restriction to $\IG$ of some Schubert varieties $Y_1$ and $Y_2$ of the usual Grassmannian, defined with respect to complete flags $F_\bullet$ and $G_\bullet$. In order for the varieties $Y_1$ and $Y_2$ to verify the conditions of the theorem, we will need some technical conditions to hold for the defining flags $F_\bullet$ and $G_\bullet$. We state these conditions in Lemma \ref{lem:gen.2} and prove that they hold for generic flags : this is quite straightforward, and the list of conditions is in fact longer to state than to check. Then we compute the invariants in Proposition \ref{prop:mod.2}.

\begin{nota}
 Denote by
\begin{itemize}
 \item $\mathbb{F}_n$ the variety of complete flags in $\C^{2n+1}$ ;
 \item $\Lambda_n$ the variety of antisymmetric $2$-forms with maximal rank on $\C^{2n+1}$.
\end{itemize}
\end{nota}

\begin{lem}
\label{lem:gen.2}
 Assume $n\geq 2$.
 Then the set of triples $(F_{\bullet},G_{\bullet},\omega)\in\mathbb{F}_n\times\mathbb{F}_n\times\Lambda_n$ such that the following holds
\begin{description}
 \item[(C1)] $\forall 0\leq p\leq 2n+1$, $\omega_{\mid F_p}$ has maximal rank ;
 \item[(C2)] $\forall 0\leq p\leq 2n+1$, $\omega_{\mid G_p}$ has maximal rank ;
 \item[(C3)] $\forall 0\leq p,q\leq 2n+1$, $F_p\cap G_q$ has the expected dimension ;
 \item[(C4)$_i$] $\dim \left( F_{2n+1-i}\cap G_{i+3}\cap F_1^{\perp}\cap G_1^{\perp}\right)=1$ ; $\mathbf{(0\leq i\leq 2n-2)}$ ;
 \item[(C5)$_i$] $\dim F_{2n-i}\cap G_{i+3}\cap G_1^{\perp}=1$ and $\dim (F_{2n-i}\cap G_{i+3}\cap G_1^{\perp})^{\perp}\cap F_2=1$; $\mathbf{(0\leq i\leq 2n-2)}$ ;
 \item[(C6)$_i$] $\dim F_{2n+1-i}\cap G_{i+2}\cap F_1^{\perp}=1$ and $\dim (F_{2n+1-i}\cap G_{i+2}\cap F_1^{\perp})^{\perp}\cap G_2=1$; $\mathbf{(2\leq i\leq 2n-4)}$ ;
 \item[(C7)$_i$] $\dim \left(F_{2n-i}\cap G_{i+2}\right)^{\perp}\cap F_2=1$ ; $\mathbf{(2\leq i\leq 2n-4)}$ ;
 \item[(C8)$_i$] $\dim\left( F_{2n-i}\cap G_{i+2}\right)^{\perp}\cap G_2=1$ ; $\mathbf{(2\leq i\leq 2n-4)}$ ;
 \item[(C9)] $F_1\not\subset G_1^{\perp}$ ;
 \item[(C10)] $G_1\not\subset F_1^{\perp}$ ;
 \item[(C11)$_i$] $F_{2n-1-i}\cap G_{i+3}\cap G_1^{\perp}=0$ ; $\mathbf{(0\leq i\leq 2n-6)}$ ;
 \item[(C12)$_i$] $F_{2n+1-i}\cap G_{i+1}\cap F_1^{\perp}=0$ ; $\mathbf{(4\leq i\leq 2n-2)}$ ;
\end{description}
is a dense open subset in $\mathbb{F}_n\times\mathbb{F}_n\times\Lambda_n$.
\end{lem}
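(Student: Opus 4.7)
The strategy is the standard one for genericity lemmas: show that each condition (C1)--(C12) individually defines a dense open subset of $\mathbb{F}_n\times\mathbb{F}_n\times\Lambda_n$, then take the finite intersection. Since $\mathbb{F}_n\times\mathbb{F}_n\times\Lambda_n$ is irreducible, once openness is established, density reduces to exhibiting a single triple satisfying each condition.

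First I would verify openness of every condition. Each one concerns the rank of a linear map built from the data, or equivalently the dimension of an intersection of linearly varying subspaces of $\C^{2n+1}$. For (C1)--(C2), having maximal rank is the open condition that a suitable Pfaffian is non-zero. For (C9)--(C10) and (C11)$_i$--(C12)$_i$, non-containment or triviality of an intersection is open by upper semicontinuity of the dimension of an intersection of linear families. The conditions (C3) and (C4)$_i$--(C8)$_i$ prescribe that a certain intersection has the expected dimension; upper semicontinuity of intersection dimension makes the locus where the dimension is \emph{at most} the expected value open, and equality is forced on this locus because a naive codimension count gives the expected value as a lower bound as well.

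Density then reduces to dimension bookkeeping plus one explicit example. The codimensions add up correctly in every case: for instance in (C4)$_i$ the four subspaces $F_{2n+1-i}$, $G_{i+3}$, $F_1^{\perp}$, $G_1^{\perp}$ have codimensions $i$, $2n-2-i$, $1$, $1$, summing to $2n$, so the expected intersection dimension is $1$; analogous additions give $1$ for (C5)$_i$--(C8)$_i$ (using $\dim W^{\perp}=2n+1-\dim W$ when $\omega|_W$ has maximal rank, which is ensured by (C1)--(C2)) and $0$ for (C11)$_i$--(C12)$_i$ in their stated ranges. To produce an explicit triple, fix a generic $\omega\in\Lambda_n$ satisfying (C1)--(C2), then apply Kleiman transversality on the homogeneous space $\mathbb{F}_n$ under the $\Sp$-action: for any fixed $G_\bullet$, a generic $\Sp$-translate $g\cdot F_\bullet$ places all the listed intersections with $G_\bullet$ in general position simultaneously.

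The only mildly delicate point is that $F_1^{\perp}$ and $G_1^{\perp}$ depend on $\omega$, so the flag data and the form are coupled. This is handled by the standard observation that for each individual condition the complement is the zero locus of a polynomial in the coordinates of $(F_\bullet,G_\bullet,\omega)$ (a suitable minor of the matrix representing the relevant linear map), and the non-vanishing locus of such a polynomial is either empty or dense open; non-emptiness is witnessed by the explicit triple constructed above. Intersecting the finitely many resulting dense opens yields the desired dense open subset of $\mathbb{F}_n\times\mathbb{F}_n\times\Lambda_n$.
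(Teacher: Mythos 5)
Your overall strategy (openness of each condition $+$ non-emptiness $\Rightarrow$ density by irreducibility of $\mathbb{F}_n\times\mathbb{F}_n\times\Lambda_n$, then intersect) matches the paper's, and your openness discussion is fine. The gap is in the non-emptiness step.

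You propose to fix a generic $\omega$ satisfying (C1)--(C2) and then invoke Kleiman transversality ``on the homogeneous space $\mathbb{F}_n$ under the $\Sp$-action.'' But $\mathbb{F}_n$ is the variety of \emph{all} complete flags in $\C^{2n+1}$ (the conditions (C1)--(C2) ask for maximal rank, not isotropy), and the odd symplectic group $\Sp$ does \emph{not} act transitively on $\mathbb{F}_n$: it already has several orbits on $\C^{2n+1}$ itself, and far more on the flag variety. So Kleiman's theorem in the form ``a generic translate intersects properly'' is not available. One could try Graber's quasi-homogeneous version (Lemma~\ref{lem:graber}), but then you would need to check that $\mathbb{F}_n$ under $\Sp$ has finitely many orbits and that the varieties you intersect meet the orbit stratification properly --- none of which you do, and it is not clear it holds. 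Moreover, your scheme fixes $\omega$ first and then tries to produce suitable flags; but the conditions (C4)--(C12) involve $\omega$-perpendiculars of subspaces drawn from \emph{both} flags, so the form and the two flags are genuinely coupled, and your one explicit triple must satisfy all of them.

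The paper sidesteps all of this by arguing non-emptiness condition-by-condition and in the opposite order: for each condition it first chooses the flags $F_\bullet, G_\bullet$ (using ordinary, group-free general position in $\mathrm{GL}_{2n+1}$) so that the relevant intersections $F_p\cap G_q$ have the right dimension and the auxiliary spaces are in direct sum, and \emph{then} constructs a form $\omega$ realising the stated perpendicularity conditions. This avoids any appeal to Kleiman on $\mathbb{F}_n$. To repair your argument you would need to either (a) give these explicit constructions, or (b) justify a Graber-type transversality statement for the $\Sp$-action on $\mathbb{F}_n$, which is a non-trivial claim in its own right.
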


\begin{proof}
 $\mathbb{F}_n\times\mathbb{F}_n\times\Lambda_n$ is a (quasi-projective) irreducible variety. Moreover all conditions are clearly open. So it is 
enough to show that each of them is non-empty.
\begin{description}
 \item[(C1),(C2) et (C3)] Obvious.
 \item[(C4)$_i$] Since $n\geq 2$, we may choose the flags $F_{\bullet}$ and $G_{\bullet}$ such that the subspace $A:=F_{2n+1-i}\cap G_{i+3}$ has 
dimension $3$ and $A$ together with the lines $L:=F_1$ and $L':=G_1$ are in direct sum. Then there exists a form $\omega\in\Lambda_n$ such that 
$A\cap L^\perp \cap L'^\perp$ has dimension $1$.
 \item[(C5)$_i$] As before we may choose $F_{\bullet}$ and $G_{\bullet}$ such that $A:=F_{2n-i}\cap G_{i+3}$ has dimension $2$ and $A$, $L:=G_1$ 
and $B:=F_2$ are complementary. So we may construct $\omega\in\Lambda_n$ such that $(A\cap L^\perp)^\perp\cap B$ has dimension $1$. First 
construct $\omega_0$ on $A\oplus B\oplus L$. Let $a\in A\setminus 0$ and $b\in B\setminus 0$. There exists $\omega_0$ a symplectic form on 
$A\oplus B$ such that $\omega_0(a,b)\neq 0$. Then we extend $\omega_0$ to $\omega$ defined on $A\oplus B\oplus L$ by setting $\omega(a,l)=0$, 
$\omega(a',l)\neq 0$ and for instance $\omega(\beta,l)=0$ for all $\beta\in B$, where $l$ generates $L$ and $a,a'$ generate $A$.
 \item[(C6)$_i$] As in \textbf{(C5)}$_i$.
 \item[(C7)$_i$] We may choose $F_{\bullet}$ and $G_{\bullet}$ such that $L:=F_{2n-i}\cap G_{i+2}$ has dimension $1$ and is in direct sum with 
$A:=F_2$. But then there exists $\omega\in\Lambda_n$ such that $A\not\subset L^\perp$.
 \item[(C8)$_i$] As in \textbf{(C7)}$_i$.
 \item[(C9)] $G_1^{\perp}$ is a general hyperplane, so it does not contain $F_1$. 
 \item[(C10)] As in \textbf{(C9)}.
 \item[(C11)$_i$] $F_{2n-1-i}\cap G_{i+3}$ is a line and $G_1^{\perp}$ is a general hyperplane, so their intersection is zero.
 \item[(C12)$_i$] As in \textbf{(C11)$_i$}. \qedhere
\end{description}
\end{proof}

We can now define the varieties we will use to compute the invariants, which will be restrictions of the Schubert varieties of the usual 
Grassmannian :

\begin{lem}
\label{lem:cycle}
 Let $0\leq j\leq n-1$ and $0\leq i\leq 2n-1-2j$ be integers. Let 
\begin{displaymath}
 X_{i,j}:=\left\{\Sigma\in G\mid \Sigma\cap F_{j+1}\neq 0,\Sigma\subset F_{2n+1-i-j}\right\},
\end{displaymath}
be a subvariety of $G:=G(2,2n+1)$, where $F_{\bullet}$ is a complete flag satisfying condition \emph{\textbf{(C1)}}. Then~:
\begin{enumerate}
 \item $X_{i,j}$ and $\IG$ intersect generically transversely. 
 \item Let $Y_{i,j}:=X_{i,j}\cap\IG$. We have
\begin{align*}
   [Y_{i,j}]^{\IG}&=
     \begin{cases}
        \tau_{2n-1-j,i+j}+\tau_{2n-j,i+j-1} & \text{if $j\neq 0$ and $i\neq 2n-1-2j$,} \\
        \tau_{2n-j,2n-2-j}  & \text{if $j\neq 0$ and $i=2n-1-2j$,} \\
	\tau_{2n-1,i} &\text{if $j=0$ and $i\neq 2n-1$,} \\
	0 &\text{if $j=0$ and $i=2n-1$},
     \end{cases}
   \end{align*}
where we denote by $[V]^\IG$ (respectively by $[V]^G$) the class of the subvariety $V$ in $\IG$ (respectively in $G$).
\end{enumerate}
\end{lem}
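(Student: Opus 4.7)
The plan is to recognize $X_{i,j}$ as a standard Schubert variety in $\mathrm{G}=\mathrm{G}(2,2n+1)$ and then push everything through the restriction map $\mathbf{j}^*$ from Lemma~\ref{lem:restriction.2}. With the convention for indexing Schubert varieties of a type A Grassmannian by partitions inside the $2\times(2n-1)$ rectangle, the two conditions $\Sigma\cap F_{j+1}\neq 0$ and $\Sigma\subset F_{2n+1-i-j}$ defining $X_{i,j}$ correspond to the partition $(2n-1-j,\,i+j)$. Hence $[X_{i,j}]^{\mathrm{G}}=\sigma_{2n-1-j,\,i+j}$, and in particular $X_{i,j}$ has codimension $2n-1+i$ in $\mathrm{G}$.

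For part (1), the idea is to invoke Kleiman's transversality theorem for the transitive action of $\mathrm{GL}(V)$ on $\mathrm{G}$, where $V=\C^{2n+1}$. Translating $F_\bullet$ by a general $g\in\mathrm{GL}(V)$ produces the Schubert variety $g\cdot X_{i,j}$, and $\IG$ is fixed as a subvariety of $\mathrm{G}$ (it is the vanishing locus of $\omega$ on the tautological bundle, hence a divisor in the Plücker embedding). Kleiman's lemma then guarantees that for $g$ in a dense open subset of $\mathrm{GL}(V)$, the intersection $g\cdot X_{i,j}\cap \IG$ is generically transverse in $\mathrm{G}$. Since condition \textbf{(C1)} cuts out an open subset of the space of flags, a generic flag satisfying \textbf{(C1)} lies in the transversality open set, which is what we need.

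For part (2), generic transversality implies
\[
 [Y_{i,j}]^{\IG}=\mathbf{j}^*[X_{i,j}]^{\mathrm{G}}=\mathbf{j}^*\sigma_{2n-1-j,\,i+j}.
\]
Since $(2n-1-j)+(i+j)=2n-1+i\geq 2n-1>2n-2$, the second case of Lemma~\ref{lem:restriction.2} applies, giving
\[
 \mathbf{j}^*\sigma_{2n-1-j,\,i+j}=\tau_{2n-1-j,\,i+j}+\tau_{2n-j,\,i+j-1}.
\]
The four cases in the statement then correspond to the convention (introduced after Lemma~\ref{lem:restriction}) of discarding classes whose indexing pair is not a valid $(n-2)$-strict partition or the special class $\tau_{2n-1,-1}$. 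The summand $\tau_{2n-j,\,i+j-1}$ is discarded exactly when $j=0$, since then $\lambda_1=2n$ exceeds $2n-1$; and $\tau_{2n-1-j,\,i+j}$ is discarded exactly when $i=2n-1-2j$ and $j\neq 0$, since then its two parts are equal and strictly larger than $n-2$, violating $(n-2)$-strictness. When both conditions hold simultaneously ($j=0$, $i=2n-1$), both terms vanish and $[Y_{i,j}]^{\IG}=0$.

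The only mildly delicate step is the transversality assertion, since $\IG$ is not preserved by the full $\mathrm{GL}(V)$-action and the $\mathrm{Sp}$-action on $\IG$ is not transitive; the point is that we only need to translate $X_{i,j}$ in $\mathrm{G}$ while keeping $\IG$ fixed, which is exactly what the $\mathrm{GL}(V)$-action allows. Everything else is a matter of bookkeeping with the indexing conventions.
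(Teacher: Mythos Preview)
Your argument is essentially correct, but it differs from the paper's in both parts, and there is a small precision issue in part~(1).

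For part~(1), the paper does not invoke Kleiman. Instead it works directly on the open Schubert cell $C_{i,j}\subset X_{i,j}$ and checks by a tangent-space computation that $T_\Sigma X_{i,j}\not\subset T_\Sigma\IG$ at every point $\Sigma\in C_{i,j}\cap\IG$; the condition that makes this work is precisely $F_{j+1}\not\subset F_{2n+1-i-j}^\perp$, which is exactly what \textbf{(C1)} guarantees. A second use of \textbf{(C1)} shows that $C_{i,j}\cap\IG$ is dense in $X_{i,j}\cap\IG$. Thus the paper proves the statement \emph{for every} flag satisfying \textbf{(C1)}. Your Kleiman argument only shows that transversality holds for flags in some dense open set, which a priori could be strictly smaller than the \textbf{(C1)} locus; so you have proved something slightly weaker than the lemma as stated. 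For the applications downstream (where one is free to shrink the open set of admissible flags further) this makes no difference, but it is worth noting that the lemma claims more.

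For part~(2), the paper goes through the pushforward rather than the pullback: it computes $[Y_{i,j}]^{\mathrm{G}}=\sigma_1\cup\sigma_{2n-1-j,\,i+j}$ in $\mathrm{G}$ via the Pieri rule, writes $[Y_{i,j}]^{\IG}=\sum_p\alpha_p\,\tau_{2n-1-p,\,i+p}$, and solves for the $\alpha_p$ using $\mathbf{j}_*\tau_{a,b}=\sigma_{a,b+1}$. Your route via $\mathbf{j}^*$ and Lemma~\ref{lem:restriction.2} is cleaner and gives the same answer; the case analysis you do for discarding invalid indices is correct.
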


\begin{proof}
\begin{enumerate}
 \item In the Schubert cell $C_{i,j}\subset X_{i,j}$, a direct computation shows that $\T{p}{X_{i,j}}\not\subset\T{p}{\IG}$ as soon as 
$F_{j+1}\not\subset F_{2n+1-i-j}^{\perp}$, which is true by condition \textbf{(C1)}. So $C_{i,j}\cap\IG$ is transverse. Applying again 
\textbf{(C1)}, we notice that $C_{i,j}\cap\IG$ is an open subset of $X_{i,j}\cap\IG$.
 \item We have $[X_{i,j}]^G=\sigma_{2n-1-j,i+j}$. Moreover, the previous item implies that $[Y_{i,j}]^G=\sigma_1 [X_{i,j}]^G$.
So
\begin{align*}
 [Y_{i,j}]^{G}&=
     \begin{cases}
        \sigma_{2n-1-j,i+j+1}+\sigma_{2n-j,i+j} & \text{if $j\neq 0$ and $i\neq 2n-1-2j$,} \\
        \sigma_{2n-j,2n-1-j}  & \text{if $j\neq 0$ and $i=2n-1-2j$,} \\
	\sigma_{2n-1,i+1} &\text{if $j=0$ and $i\neq 2n-1$,} \\
	0 &\text{if $j=0$ and $i=2n-1$}.
     \end{cases}
   \end{align*}
Moreover, $[Y_{i,j}]^G=\mathbf{j}_{\star}[Y_{i,j}]^{\IG}$, $[Y_{i,j}]^{\IG}=\sum_{p=0}^{\lfloor n-1-\frac{i}{2}\rfloor}\alpha_p\tau_{2n-1-p,i+p}$ 
and $\mathbf{j}_{\star}\tau_{a,b}=\sigma_{a,b+1}$ for $a+b\geq 2n-1$, so we can determine the $\alpha_p$ by identifying both expressions. \qedhere
\end{enumerate}
\end{proof}

We now assume all genericity conditions \textbf{(C1-12)} are satisfied and prove the

\begin{prop}
\label{prop:mod.2}
 Let $0\leq i\leq 2n-2$, $0\leq 2j\leq 2n-2-i$ and $0\leq 2l\leq i$ be integers. Set $Y_1:=Y_{i,j}(F_{\bullet})$ and 
$Y_2:=Y_{2n-2-i,l}(G_{\bullet})$, where the complete flags $F_{\bullet}$ and $G_{\bullet}$ as well as the form $\omega$ verify the transversality
 conditions of Lemma \ref{lem:gen.2}. Then
\begin{enumerate}
 \item The intersections $Y_1\cap\O$ et $Y_2\cap\O$ are transverse. Moreover
  \begin{align*}
   Y_1\cap\O&=
     \begin{cases}
        \emptyset & \text{if $i$ or $j\neq 0$} \\
        \left\{F_1\oplus K\right\}  & \text{if $i=j=0$}
     \end{cases}  
 \\
Y_2\cap\O&=
       \begin{cases}
          \emptyset & \text{if $i\neq 2n-2$ or $l\neq 0$} \\
          \left\{G_1\oplus K\right\}  & \text{if $i=2n-2$ and $l=0$}
       \end{cases}
  \end{align*}
 \item If $j$ or $l\geq 2$, there exists no line passing through $Y_1$ and $Y_2$. Else there exists a unique line passing through $Y_1$ and $Y_2$. Therefore the Gromov-Witten invariant $I_1(\tau_{a,b},\tau_{c,d})$ equals $1$ if $a=c=2n-1$ and $b+c=2n-2$, and $0$ in the other cases.
\end{enumerate}
\end{prop}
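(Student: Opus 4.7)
The plan is to handle the two parts separately. For part (1), recall that $\O = \{\Sigma \in \IG : K \subset \Sigma\}$ where $K := \ker \omega$. A point $\Sigma = K + L$ in $Y_1 \cap \O$ must satisfy $\Sigma \subset F_{2n+1-i-j}$, which forces $K \subset F_{2n+1-i-j}$; by the genericity of $K$ relative to $F_\bullet$ this only occurs when $2n+1-i-j = 2n+1$, that is $(i,j) = (0,0)$. In this case $\Sigma \cap F_1 \neq 0$ forces $L = F_1$, giving the unique point $\Sigma = F_1 \oplus K$, automatically isotropic since $K$ lies in the radical of $\omega$. The same argument applied to $Y_2 = Y_{2n-2-i,l}$ yields non-emptiness only for $i = 2n-2, l = 0$, with unique point $G_1 \oplus K$. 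Transversality is a direct tangent-space computation using the genericity conditions.

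For part (2), I first invoke Theorem \ref{theo:enum}, whose hypotheses are verified thanks to part (1) and Lemma \ref{lem:gen.2}; the Gromov--Witten invariant then equals the number of lines in $\IG$ incident to $Y_1, Y_2$. A line in $\IG$ has the form $\mathcal{D}(U_1, U_3)$ with $U_1 \subset U_3 \subset U_1^\perp$. The line meets $Y_{i,j}(F_\bullet)$ iff there is $\Sigma$ on the line with $U_1 \subset \Sigma \subset U_3 \cap F_{2n+1-i-j}$ and $\Sigma \cap F_{j+1} \neq 0$; in particular $U_1 \subset F_{2n+1-i-j}$. Analogously, incidence with $Y_2$ gives $U_1 \subset G_{3+i-l}$, so $U_1$ lies in $F_{2n+1-i-j} \cap G_{3+i-l}$, an intersection of generic dimension $3-j-l$.

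For $j \geq 2$ (the case $l \geq 2$ being symmetric), the alternative $U_1 = G_1$ is excluded by \textbf{(C3)} since $G_1 \not\subset F_{2n+1-i-j}$ generically, so incidence with $Y_2$ forces $G_1 \subset U_3$ and hence $U_1 \subset G_1^\perp$. The triple intersection $F_{2n+1-i-j} \cap G_{3+i-l} \cap G_1^\perp$ then vanishes by \textbf{(C11)}$_i$, ruling out any line; for $j + l \geq 3$ the intersection $F_{2n+1-i-j} \cap G_{3+i-l}$ is already trivial. For $j, l \leq 1$, one performs a sub-case analysis over the four possibilities for $(j, l)$: the degenerate cases $U_1 = F_1$ and $U_1 = G_1$ are excluded by \textbf{(C9)}, \textbf{(C10)}; then $U_3$ must contain the relevant lines of both flags separately, isotropy $U_3 \subset U_1^\perp$ forces $U_1$ into a specific small subspace whose dimension is pinned down to $1$ by one of \textbf{(C4)}--\textbf{(C8)}, giving a unique line. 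Finally, the individual Gromov--Witten invariants $I_1(\tau_{a,b},\tau_{c,d})$ follow from these counts by an inductive sieve based on the expansion of $[Y_{i,j}]^{\IG}$ in Schubert classes from Lemma \ref{lem:cycle}.

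The main obstacle is the detailed bookkeeping in the sub-case analysis of part (2), especially the $(j,l) = (1,1)$ case and the boundary values of $i$, where several orthogonality and dimension conditions must be combined simultaneously to rule out the many potentially degenerate line configurations.
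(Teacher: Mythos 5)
Your proposal follows essentially the same route as the paper: Part~(1) by noting that $K \subset \Sigma \subset F_{2n+1-i-j}$ (resp.\ $G_{i+3-l}$) forces the stated equalities, with a tangent-space check for transversality; Part~(2) by invoking Theorem~\ref{theo:enum}, showing that $V := U_1$ must lie in $F_{2n+1-i-j}\cap G_{i+3-l}$ (trivial for $j+l\geq 3$, killed by (C11)$_i$/(C12)$_i$ for $(j,l)=(2,0),(0,2)$), then running the four-case analysis for $j,l\leq 1$ using (C4)--(C8), and finally deducing the individual invariants from Lemma~\ref{lem:cycle} via a triangular expansion. One small stylistic point: your case distinction on whether $U_1=G_1$ in the $j\geq 2$ step is unnecessary --- any $\Sigma'$ on the line meeting $G_1$ satisfies $G_1 \subset \Sigma' \subset U_3 \subset U_1^\perp$, so $U_1 \subset G_1^\perp$ holds directly regardless.
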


\begin{proof}
 \begin{enumerate}
 \item $Y_1\cap\O=\left\{\Sigma\in\IG\mid \Sigma\cap F_{j+1}\neq 0,K\subset\Sigma\subset F_{2n+1-i-j}\right\}$, so if $i+j\neq 0$, then 
$K\subset F_{2n+1-i-j}$, which, according to \textbf{(C1)}, implies that $Y_1\cap\O=\emptyset$, so the intersection is transverse. Moreover if 
$i+j=0$ we get $Y_1\cap\O=\left\{F_1\oplus K\right\}$. Denote by $\Sigma_0$ the point $K\oplus F_1$. To prove transversality at $\Sigma_0$ we use
the embedding in the usual Grassmannian $G:=G(2,2n+1)$. It is well-known that $\T{\Sigma_0}{G}=\Hom{\Sigma_0}{\C^{2n+1}\slash\Sigma_0}$. Now 
express $\T{\Sigma_0}{Y_1}$ and $\T{\Sigma_0}{\O}$ as subspaces of $\T{\Sigma_0}{G}$ :
\begin{align*}
 \T{\Sigma_0}{Y_1}=\left\{\phi\in\T{\Sigma_0}{G}\mid\phi(f_1)=0\right\}, \\
 \T{\Sigma_0}{\O}=\left\{\phi\in\T{\Sigma_0}{G}\mid\phi(k)=0\right\},
\end{align*}
where $f_1$ and $k$ generate $F_1$ and $K$. We see that these subspaces are complementary in $\T{\Sigma_0}{G}$. Computing $\dim Y_1=2n-2$ and 
 $\dim\O=2n-1$ we conclude that they generate $\T{\Sigma_0}{\IG}$.
We can proceed in a similar fashion for $Y_2\cap \O$.
 \item We first study the case where $j$ or $l\geq 2$. Let $\mathcal{D}:=\mathcal{D}(V,W)$ be a line meeting $Y_1$ and $Y_2$. Then we must have $V\subset F_{2n+1-i-j}\cap G_{i+3-l}$. But 
according to \textbf{(C3)}, this subspace is either zero or it has codimension $2n+4-j-l$. So for $j+l\geq 3$, it is zero and there is no line. If 
$j=2$ and $l=0$ (and symmetrically if $j=0$ and $l=2$), we must have $V\subset F_{2n-1-i}\cap G_{i+3}\cap G_1^{\perp}=0$, which is impossible by 
\textbf{(C11)$_i$} (respectively by \textbf{(C12)$_i$}). So for a line to exist we must have $j$ and $l\leq 1$.

 Now assume $j,l\leq 1$. There are four cases to study :
 \begin{enumerate}[a)]
 \item $j=l=0$ ;
 \item $j=1$, $l=0$ ;
 \item $j=0$, $l=1$ ;
 \item $j=l=1$.
\end{enumerate}
\begin{enumerate}[a)]
 \item Let $A=F_{2n+1-i}\cap G_{i+3}$. We have $\dim A=3$ by \textbf{(C3)}. But $V\subset A$ and $V\subset F_1^{\perp}\cap G_1^{\perp}$ since 
$F_1,G_1\subset W$ and $W\subset V^{\perp}$. By \textbf{(C4)}$_i$, we have $\dim A\cap F_1^{\perp}\cap G_1^{\perp}=1$, hence 
$V=A\cap F_1^{\perp}\cap G_1^{\perp}$. So $W\supset V +(F_1\oplus G_1)$ ($F_1$ and $G_1$ are in direct sum by \textbf{(C3)}). To show equality, it is 
enough to prove that the sum is direct. If not then there exists a non-zero vector of the form $a f_1+ b g_1$ in $V$, where $f_1$ and $g_1$ 
generate $F_1$ et $G_1$. So $af_1+bg_1\in A\subset F_{2n+1-i}$, which implies $bg_1\in F_{2n+1-i}$, hence $b=0$ or $i=0$. If $b=0$, then $V=F_1$, 
and consequently $F_1\subset G_1^{\perp}$, which is impossible by \textbf{(C9)}. So $i=0$. But then $af_1+bg_1\in G_3$, so $af_1\in G_3$ and also 
$a=0$. Hence $V=G_1\subset F_1^{\perp}$, which is excluded by \textbf{(C9)}.
 \item \label{b} Let $A=F_{2n-i}\cap G_{i+3}$. By \textbf{(C3)}, $\dim A=2$. By \textbf{(C5)}$_i$, $\dim A\cap G_1^{\perp}=1$, so 
$V=A\cap G_1^{\perp}$. Moreover $\dim V^{\perp}\cap F_2=1$. We have $W\supset V+G_1+V^{\perp}\cap F_2$. To determine $W$, it is enough to show that
 the sum is direct. First, $V+G_1$ is direct, because if it was not we would have $V=G_1$, so $G_1\subset F_{2n-i}$, which is impossible by 
\textbf{(C3)}. Finally the sum $V\oplus G_1+V^{\perp}\cap F_2$ is direct, or we would have $V^{\perp}\cap F_2\subset G_{i+3}$. But 
$\dim F_2\cap G_{i+3}=0$ by \textbf{(C3)} since $i\leq 2n-4$. 
 \item This case is similar to \ref{b} ; the proof uses \textbf{(C3)} and \textbf{(C6)}$_i$. 
 \item By \textbf{(C3)}, we get $\dim F_{2n-1}\cap G_{i+2}=1$, so $V=F_{2n-1}\cap G_{i+2}$. We must have $\dim W\cap F_2\neq 0$. But 
$V\not\subset F_2$, or else we would get $G_{i+2}\cap F_2\neq 0$, which is impossible by \textbf{(C3)} since $i\leq 2n-4$. Now 
$W\subset V^{\perp}$ implies $W\cap F_2\subset V^{\perp}\cap F_2$, which has dimension $1$ by \textbf{(C7)}$_i$. So 
$W\subset V^{\perp}\cap F_2\oplus V$. Similarly, using \textbf{(C8)}$_i$, we get $W\cap G_2=V^{\perp}\cap G_2$, so 
$W\supset V\oplus V^{\perp}\cap F_2 + V^{\perp}\cap G_2$. Now we only have to show that this sum is direct. If not, then there exists a non-zero 
vector of the form $av+bf_2$ in $V^{\perp}\cap G_2$, where $v$ and $f_2$ generate $V$ and $V^{\perp}\cap F_2$. As $v\in G_{i+2}$, we obtain 
$b f_2\in G_{i+2}$, so $b=0$ because $i\leq 2n-4$. Hence $V^{\perp}\cap G_2=V$ and consequently $V\subset G_2$ and $\dim F_{2n-i}\cap G_2 \leq 1$, 
which is impossible since $i\geq 2$. 
\end{enumerate}
The final formula for $I_1(\tau_{a,b},\tau_{c,d})$ follows from a straightforward calculation. \qedhere
\end{enumerate}
\end{proof}

\subsection{Computation of some invariants in $\Mod{3}{1}$}
\label{sec:mod.3}

In the previous section we computed the two-pointed invariants in $\IG$, which is equivalent to computing the quantum terms of the product by the 
hyperplane class $\tau_1$.
Indeed, the divisor axiom \cite[§ 2.2.4]{KM} yields 
\[
 I_1(\gamma_1,\gamma_2,\tau_1)=I_1(\gamma_1,\gamma_2),
\]
where $\gamma_1$ and $\gamma_2$ are any cohomology classes.
Hence to obtain a quantum Pieri rule for $\IGl$, we are left to compute the quantum product by $\tau_{1,1}$.
So we have to determine all invariants of the form $I_1(\tau_{1,1},\tau_{\lambda},\tau_{\mu})$ with $|\lambda|+|\mu|=6n-5$, that is to compute the 
number of lines through the following subvarieties :
\begin{align*}
 Y_1&=\left\{\Sigma\in\IG \mid \Sigma\cap F_{j+1}\neq 0,\Sigma\subset F_{2n+2-i-j}\right\}, \\
 Y_2&=\left\{\Sigma\in\IG \mid \Sigma\cap G_{l+1}\neq 0,\Sigma\subset G_{i+3-l}\right\}, \\
 Y_3&=\left\{\Sigma\in\IG \mid \Sigma\subset H\right\},
\end{align*}
where $0\leq i\leq 2n-1$, $0\leq 2j\leq 2n-1-i$ and $0\leq 2l\leq i$ are integers, $F_\bullet$ and $G_\bullet$ are isotropic flags and $H$ is a 
hyperplane. 

As before we use a genericity result which is proved in a similar way as Lemma \ref{lem:gen.2} :
\begin{lem}
\label{lem:gen.3}
 Assume $n\geq 2$.
 Then the set of $4$-uples $(F_{\bullet},G_{\bullet},H,\omega)\in\mathbb{F}_n\times\mathbb{F}_n\times\P{2n}\times\Lambda_n$ satisfying the following conditions
\begin{description}
 \item[(C1)] $\forall 0\leq p\leq 2n+1$, $\omega_{\mid F_p}$ has maximal rank ;
 \item[(C2)] $\forall 0\leq p\leq 2n+1$, $\omega_{\mid G_p}$ has maximal rank ;
 \item[(C3)] $\omega_{\mid H}$ is symplectic ;
 \item[(C4)] $\forall 0\leq p,q\leq 2n+1$, $F_p\cap G_q$ has the expected dimension ;
 \item[(C5)] $\forall 0\leq p,q\leq 2n+1$, $F_p\cap G_q\cap H$ has the expected dimension ;
 \item[(C6)$_i$] $\dim \left(F_{2n+2-i}\cap G_{i+3}\cap H\cap F_1^{\perp}\cap G_1^{\perp}\right)=1$ ; $\mathbf{(1\leq i\leq 2n-2)}$ ;
 \item[(C7)$_i$] $\dim F_{2n+1-i}\cap G_{i+3}\cap H\cap G_1^{\perp}=1$ and $\dim (F_{2n+1-i}\cap G_{i+3}\cap H\cap G_1^{\perp})^{\perp}\cap F_2=1$; $\mathbf{(0\leq i\leq 2n-3)}$ ;
 \item[(C8)$_i$] $\dim F_{2n+2-i}\cap G_{i+2}\cap H\cap F_1^{\perp}=1$ and $\dim (F_{2n+2-i}\cap G_{i+2}\cap H\cap F_1^{\perp})^{\perp}\cap G_2=1$; $\mathbf{(2\leq i\leq 2n-1)}$ ;
 \item[(C9)$_i$] $\dim \left(F_{2n+1-i}\cap G_{i+2}\cap H\right)^{\perp}\cap F_2=1$ ; $\mathbf{(2\leq i\leq 2n-3)}$ ;
 \item[(C10)$_i$] $\dim \left(F_{2n+1-i}\cap G_{i+2}\cap H\right)^{\perp}\cap G_2=1$ ; $\mathbf{(2\leq i\leq 2n-3)}$ ;
 \item[(C11)] $F_1\not\subset G_1^{\perp}$ ;
 \item[(C12)] $G_1\not\subset F_1^{\perp}$ ;
 \item[(C13)$_i$] $F_{2n-i}\cap G_{i+3}\cap H\cap G_1^{\perp}=0$ ; $\mathbf{(0\leq i\leq 2n-5)}$ ;
 \item[(C14)$_i$] $F_{2n+2-i}\cap G_{i+1}\cap H\cap F_1^{\perp}=0$ ; $\mathbf{(4\leq i\leq 2n-1)}$ ;
 \item[(C15)$_i$] $F_2\cap G_{i+3}\cap G_1^\perp=0$ ; $0\leq i\leq 2n-3$ ;
 \item[(C16)$_i$] $G_2\cap F_{2n+2-i}\cap F_1^\perp=0$ ; $2\leq i\leq 2n-1$ ;
\end{description}
is a dense open subset of $\mathbb{F}_n\times\mathbb{F}_n\times\P{2n}\times\Lambda_n$.
\end{lem}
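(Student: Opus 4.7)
The plan is to follow the strategy used in the proof of Lemma \ref{lem:gen.2}. The ambient space $\mathbb{F}_n\times\mathbb{F}_n\times\P{2n}\times\Lambda_n$ is a product of irreducible quasi-projective varieties, hence irreducible. Each of the listed conditions is manifestly Zariski-open: maximal rank of a restricted form, expected dimension of an intersection of linear subspaces, and non-containment of one subspace in another are all open conditions on the data $(F_\bullet,G_\bullet,H,\omega)$. By irreducibility, a finite intersection of non-empty open subsets is again non-empty and dense, so it suffices to verify each condition separately, after which the simultaneous density follows automatically.

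Next I would verify non-emptiness of each individual condition, essentially by the same explicit constructions as in Lemma \ref{lem:gen.2}, with the hyperplane $H$ playing a role analogous to the ambient $\C^{2n+1}$ but of one dimension lower. Conditions \textbf{(C1)}--\textbf{(C5)} are transverse-intersection conditions for generic flags, a generic $H$ and a generic $\omega$; they are standard. Conditions \textbf{(C11)}, \textbf{(C12)}, \textbf{(C13)$_i$}, \textbf{(C14)$_i$}, \textbf{(C15)$_i$} and \textbf{(C16)$_i$} are non-containment conditions, and they hold because a generic hyperplane or symplectic orthogonal fails to contain a fixed line (or intersects a fixed line trivially).

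Conditions \textbf{(C6)$_i$}--\textbf{(C10)$_i$} require a little more care: one must adjust dimensions so that a chosen subspace of $H$ admits a symplectic orthogonal intersecting another fixed subspace of $H$ in the prescribed dimension. I would proceed as in items \textbf{(C4)$_i$}--\textbf{(C8)$_i$} of Lemma \ref{lem:gen.2}: first choose the flags $F_\bullet$ and $G_\bullet$ (and the hyperplane $H$) so that the relevant intersection of flag terms with $H$ has the expected dimension and decomposes as a direct sum with the given complementary spaces; then define $\omega$ on this decomposition so as to pair the required vectors correctly, and extend arbitrarily to a maximal-rank form on $\C^{2n+1}$. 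The hypothesis $n\geq 2$ ensures that enough room is available for each of these direct sum decompositions.

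The only real obstacle is bookkeeping, namely making sure that the many constraints imposed by the various conditions on a single form $\omega$ are mutually compatible; however, since each condition imposes only finitely many linear constraints on $\omega$ away from a strict Zariski-closed subset of $\Lambda_n$, compatibility follows automatically from the irreducibility of $\Lambda_n$ once every condition has been verified individually. Thus the only genuinely new input compared to Lemma \ref{lem:gen.2} is the insertion of $H$, and the argument is otherwise identical.
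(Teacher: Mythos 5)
Your proposal is correct and follows the same route as the paper: the paper itself only remarks that this lemma ``is proved in a similar way as Lemma \ref{lem:gen.2},'' and your argument --- irreducibility of the ambient product, openness of each condition, and non-emptiness checked one condition at a time by explicit construction with $H$ inserted --- is exactly the structure of the proof of Lemma \ref{lem:gen.2}.
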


Under these assumptions we can prove the

\begin{prop}
\label{prop:mod.3}
 \begin{enumerate}
\item The intersections $Y_i\cap\O$ are transverse. Moreover
  \begin{align*}
   Y_1\cap\O&=
     \begin{cases}
        \emptyset & \text{if $i+j\geq 2$} \\
        \left\{F_1\oplus K\right\}  & \text{if $i=1$ and $j=0$} \\
	\left\{K\oplus L\mid L\subset F_2\right\} &\text{if $i=0$ and $j=1$}
     \end{cases}
     \\
   Y_2\cap\O&=
       \begin{cases}
          \emptyset & \text{and $i\neq 2n-2$ or $l\neq 0$} \\
          \left\{G_1\oplus K\right\}  & \text{if $i=2n-2$ and $l=0$}
       \end{cases} \\
   Y_3\cap\O&=\emptyset.
   \end{align*}
 \item If $j$ ou $l\geq 2$, there is no line meeting $Y_1$, $Y_2$ and $Y_3$.
 \item If $j$ and $l\leq 1$, there is a unique line meeting $Y_1$, $Y_2$ and $Y_3$. Therefore the Gromov-Witten invariant $I_1(\tau_{1,1},\tau_{a,b},\tau_{c,d})$ equals $1$ if $a=c=2n-1$ and $b+c=2n-3$, and $0$ in the other cases.
\end{enumerate}
\end{prop}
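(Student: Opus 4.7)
The plan is to follow the same overall strategy as in Proposition \ref{prop:mod.2}, combining Theorem \ref{theo:enum} with an explicit count of lines through the $Y_i$, but with the additional hyperplane constraint coming from $Y_3$. Lemma \ref{lem:gen.3} supplies a dense open set of generic data, and the enumerativity theorem reduces $I_1(\tau_{1,1},\tau_{a,b},\tau_{c,d})$ to counting lines $\mathcal{D}(V,W)$ in $\IG$ meeting all three $Y_i$, provided we first verify that each $Y_i$ meets the closed orbit $\O$ transversely along its generic part.

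For part (1), the computation of $Y_1\cap\O$ and $Y_2\cap\O$ is a direct copy of the analogous step in Proposition \ref{prop:mod.2}, using conditions \textbf{(C1)} and \textbf{(C2)} to force $K\not\subset F_{2n+2-i-j}$ (respectively $K\not\subset G_{i+3-l}$) whenever $i+j\geq 2$ (respectively $i\neq 2n-2$ or $l\neq 0$), and a tangent-space computation inside $\T{\Sigma_0}{G(2,2n+1)}$ at the exceptional points $\Sigma_0 = F_1\oplus K$, $G_1\oplus K$, or $K\oplus L$ with $L\subset F_2$. The vanishing $Y_3\cap\O=\emptyset$ is immediate from \textbf{(C3)}: since $\omega_{|H}$ is symplectic, $K\not\subset H$, so no $\Sigma\supset K$ lies in $H$.

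For parts (2) and (3), let $\mathcal{D}(V,W)$ be a line meeting $Y_1$, $Y_2$, $Y_3$. Then $V\subset F_{2n+2-i-j}\cap G_{i+3-l}\cap H$, and by \textbf{(C5)} this intersection has the expected dimension; it is zero for $j+l\geq 3$. The remaining cases $(j,l)=(2,0)$ and $(0,2)$ are killed using \textbf{(C13)}$_i$ and \textbf{(C14)}$_i$ respectively, just as in Proposition \ref{prop:mod.2}. This leaves the four cases $j,l\in\{0,1\}$, which I would treat in the same order as in the previous section: in case $(0,0)$ the line $V$ is pinned down by \textbf{(C6)}$_i$, in cases $(1,0)$ and $(0,1)$ by \textbf{(C7)}$_i$ and \textbf{(C8)}$_i$ together with the relevant dimension conditions, and in case $(1,1)$ by \textbf{(C9)}$_i$ and \textbf{(C10)}$_i$. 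The new conditions \textbf{(C15)}$_i$ and \textbf{(C16)}$_i$ in Lemma \ref{lem:gen.3} are precisely what is needed to check that the resulting sum $V+F_1+G_1$ (or its analogues with $V^\perp\cap F_2$, $V^\perp\cap G_2$) is direct, so that $W$ is determined uniquely. Collecting the contributions and translating the cycle classes of $Y_1,Y_2$ via Lemma \ref{lem:cycle} (and $[Y_3]=\tau_{1,1}+\tau_1^2\cdot(\cdots)$) yields the stated formula for $I_1(\tau_{1,1},\tau_{a,b},\tau_{c,d})$.

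The main obstacle is the bookkeeping in case $(1,1)$: the extra hyperplane $H$ tightens every dimension estimate by one, so one must check at each step that \textbf{(C9)}$_i$ and \textbf{(C10)}$_i$ really do give the one-dimensional spaces $V^\perp\cap F_2$ and $V^\perp\cap G_2$, rather than something larger that would fail to determine $W$. Once this directness is established, the final identification of the invariants is a routine Pieri-style translation between the classes $[Y_{i,j}]^{\IG}$ from Lemma \ref{lem:cycle} and the Schubert basis $\tau_{a,b}$.
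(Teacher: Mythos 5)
The proposal follows essentially the same route as the paper: first deduce the transversality of $Y_i\cap\O$ by the argument of Proposition~\ref{prop:mod.2} (tangent space computation inside the ambient Grassmannian) plus (C3) for $Y_3$; then reduce the line count to the four sub-cases $j,l\in\{0,1\}$ by killing $j+l\geq 3$ via (C5) and the boundary cases $(2,0)$, $(0,2)$ via (C13)$_i$, (C14)$_i$; and finally pin down $V$ and $W$ case by case using the listed dimension conditions, with the new conditions (C15)$_i$, (C16)$_i$ handling the extra directness checks that the hyperplane $H$ creates. This is exactly the paper's argument, and you even caught the paper's typo (it cites (C13)$_i$ for both $(2,0)$ and $(0,2)$, where the latter should be (C14)$_i$). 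The only slips are cosmetic: (C15)/(C16) are used for the directness of $V+G_1+F_2\cap V^\perp$ and $V+F_1+G_2\cap V^\perp$ in cases $(1,0)$, $(0,1)$, whereas the directness of $V+F_1+G_1$ in case $(0,0)$ uses (C11), (C12); and the parenthetical $[Y_3]=\tau_{1,1}+\tau_1^2\cdot(\cdots)$ is not needed (and not dimensionally coherent as written), since $Y_3$ is cut out by a generic hyperplane and its class is $\tau_{1,1}$ directly via $\mathbf{j}^*\sigma_{1,1}$.
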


\begin{proof}
 \begin{enumerate}
  \item The case of $Y_2\cap\O$ has already been treated in the proof of Proposition \ref{prop:mod.2}. If $\Sigma\in Y_1\cap\O$, we must have 
$K\subset F_{2n+2-i-j}$, so $i+j=1$.
If $i=1$ and $j=0$, then $Y_1\cap\O=\left\{K\oplus F_1\right\}$, and transversality is proven as in Proposition \ref{prop:mod.2}.
If $i=0$ and $j=1$, then $Y_1\cap\O=\left\{K\oplus L\mid L\subset F_2\right\}$. Take $\Sigma_0=K\oplus<f_2>$ where $f_2$ is a non-zero element 
in $F_2$. Again we express $\T{\Sigma_0}{Y_1}$ and $\T{\Sigma_0}{\O}$ as
subspaces of $\T{\Sigma_0}{G}$, where $G$ is the usual Grassmannian :
\begin{align*}
 \T{\Sigma_0}{Y_1}=\left\{\phi\in\T{\Sigma_0}{G}\mid\phi(f_2)\in F_2\slash <f_2>,\phi(k)\perp f_2\right\} \\
 \T{\Sigma_0}{\O}=\left\{\phi\in\T{\Sigma_0}{G}\mid\phi(k)=0\right\},
\end{align*}
with $k$ a generator of $K$. We see that the intersection of $\T{\Sigma_0}{Y_1}$ and $\T{\Sigma_0}{\O}$ has dimension $1$. 
Computing $\dim Y_1=2n-1$ and $\dim\O=2n-1$ we conclude that they generate $\T{\Sigma_0}{\IG}$.
Finally, $Y_3\cap\O=\emptyset$ since $K\not\subset H$ by \textbf{(C3)}.
  \item By \textbf{(C5)}, $F_{2n+2-i-j}\cap G_{i+3-l}\cap H=0$ as soon as $j+l\geq 3$. Moreover if $j=2$ and $l=0$ then we get $W\supset G_1$, 
hence $V\subset F_{2n-i}\cap G_{i+3}\cap H\cap G_1^{\perp}$. But this space is zero by \textbf{(C13)$_i$}, so there is no line. By 
\textbf{(C13)$_i$}, we get the same result when $j=0$ and $l=2$.
  \item There are four cases :
 \begin{enumerate}[a)]
  \item $j=l=0$ ;
  \item $j=1$, $l=0$ ;
  \item $j=0$, $l=1$ ;
  \item $j=l=1$.
 \end{enumerate}
 \begin{enumerate}[a)]
  \item We have $V=F_{2n+2-i}\cap G_{i+3}\cap H\cap F_1^{\perp}\cap G_1^{\perp}$ by \textbf{(C6)$_i$}. Moreover $W\supset V+F_1+G_1$. To obtain 
equality we only have to show that the sum is direct. First $V\neq F_1$ since $F_1\not\subset G_1^{\perp}$ by \textbf{(C11)}. Finally if 
$G_1\subset V\oplus F_1$, as $V\subset F_1^{\perp}$, we would have $G_1\subset F_1^{\perp}$, which is impossible by \textbf{(C12)}.
  \item We have $V=F_{2n+1-i}\cap G_{i+3}\cap H\cap G_1^{\perp}$ by \textbf{(C7)$_i$}. Moreover $W\subset V+G_1+F_2\cap V^{\perp}$. We prove now 
that this sum is direct. First $V\neq G_1$, or we would have $G_1\subset H$, which is excluded by \textbf{(C5)}. Now 
$F_2\cap V^\perp\not\subset V\oplus G_1$ since $F_2\cap G_{i+3}\cap G_1^\perp=0$ for $i\leq 2n-3$ by \textbf{(C15)$_i$}.
  \item $V=F_{2n+2-i}\cap G_{i+2}\cap H\cap F_1^\perp$ by \textbf{(C8)$_i$}. Moreover $W\supset V+F_1+G_2\cap V^\perp$ (by \textbf{(C9)$_i$} and 
\textbf{(C10)$_i$}), and this sum is direct (same argument than in the previous case, using condition \textbf{(C16)$_i$}).
  \item $V=F_{2n+1-i}\cap G_{i+2}\cap H$, $W\supset V+W\cap F_2+W\cap G_2=V+F_2\cap V^\perp+G_2\cap V^\perp$. This sum is direct ; indeed, 
$F_2\cap V^\perp\neq G_2\cap V^\perp$ car $F_2\cap G_2=0$ by \textbf{(C4)} ; in addition $V\not\subset F_2\cap V^\perp\oplus G_2\cap V^\perp$, or 
we would get $G_2\cap F_{2n+1-i}\neq 0$, which is impossible by $i\geq 2$.
 \end{enumerate}
The final formula for $I_1(\tau_{1,1},\tau_{a,b},\tau_{c,d})$ follows from a straightforward calculation. \qedhere
\end{enumerate}
\end{proof}

\subsection{Quantum Pieri rule}
\label{sec:qpieri}

We can now prove Theorem \ref{theo:qpieri} :

\begin{proof}[Proof of Theorem \ref{theo:qpieri}]
We start with the invariants $I_1(\tau_1,\tau_{a,b},\tau_{c,d})$, which are equal to the two-pointed invariants $I_1(\tau_{a,b},\tau_{c,d})$ because of
the divisor axiom. The first item of Proposition \ref{prop:mod.2} enables us to apply the Enumerativity theorem \ref{theo:enum}. Then we use the 
second item of Proposition \ref{prop:mod.2}. For $j=l=0$ we get that for all $0\leq i\leq 2n-2$, we have $I_1(\tau_{2n-1,i},\tau_{2n-1,2n-2-i})=1$.
Then setting $j=0$ and $l>0$ we recursively get $I_1(\tau_{2n-1,i},\tau_{2n-1-l,2n-2-i+l})=0$ (for all $i$ and $l>0$). Finally, setting $j$ 
and $l>0$ we get $I_1(\tau_{2n-1-j,i+j},\tau_{2n-1-l,2n-2-i+l})=0$ (for all $i$ and $j,l>0$).
Hence :
\begin{align*}
 I_1(\tau_1,\tau_{a,b},\tau_{c,d}) &= \begin{cases}
	1 \text{ if $a=c=2n-1$,} \\
	0 \text{ if $a$ or $c<2n-1$}.
     \end{cases}
\end{align*}
Similarly, Proposition \ref{prop:mod.3} and Theorem \ref{theo:enum} imply
\begin{align*}
 I_1(\tau_{1,1},\tau_{a,b},\tau_{c,d}) &= \begin{cases}
	1 \text{ if $a=c=2n-1$,} \\
	0 \text{ if $a$ or $c<2n-1$}.
     \end{cases}
\end{align*}
Using the classical Pieri rule and Poincar\'e duality, we get our result.
\end{proof}

Using the quantum Pieri formula we can fill out the Hasse diagram from Figure \ref{fig:hasse.2.7} to obtain the quantum Hasse diagram of 
$\mathrm{IG}(2,7)$ in Figure \ref{fig:qhasse.2.7}. As a comparison see the quantum Hasse diagram of $\mathrm{IG}(2,6)$ in Figure 
\ref{fig:qhasse.2.6}.

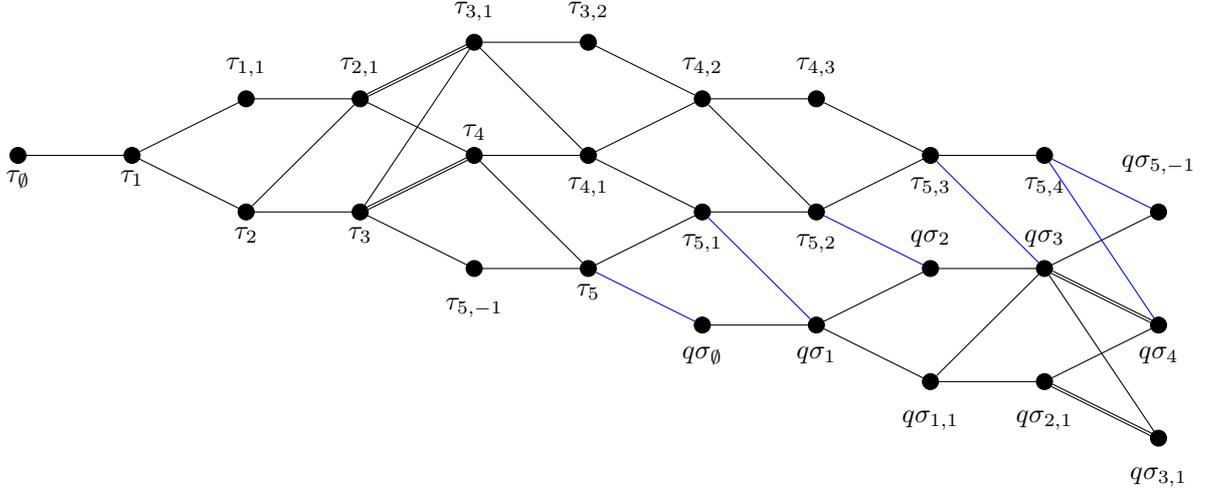
\begin{figure}[h!]
 \centering
 \begin{tikzpicture}[scale=1.5]
\tikzstyle{every node}=[draw,circle,fill=black,minimum size=6pt,inner sep=0pt]


\draw (0,0) node (0) [label=below:$\tau_{\emptyset}$] {}
        -- (1,0) node (1) [label=below:$\tau_1$] {}
        -- (2,-0.5) node (2) [label=below:$\tau_{2}$] {}
        -- (3,-0.5) node (3) [label=below:$\tau_{3}$] {}
        -- (4,-1) node (5-1) [label=below:$\tau_{5,-1}$] {}
        -- (5,-1) node (5) [label=below:$\tau_{5}$] {}
        -- (6,-0.5) node (51) [label=below:$\tau_{5,1}$] {}
        -- (7,-0.5) node (52) [label=below:$\tau_{5,2}$] {}
        -- (8,0) node (53) [label=below:$\tau_{5,3}$] {}
        -- (9,0) node (54) [label=below:$\tau_{5,4}$] {};
\draw (1) 
	-- (2,0.5) node (11) [label=above:$\tau_{1,1}$] {}
	-- (3,0.5) node (21) [label=above:$\tau_{2,1}$] {}
	-- (4,0) node (4) [label=above:$\tau_{4}$] {}
	-- (5,0) node (41) [label=below:$\tau_{4,1}$] {}
	-- (6,0.5) node (42) [label=above:$\tau_{4,2}$] {}
	-- (7,0.5) node (43) [label=above:$\tau_{4,3}$] {}
	-- (53);
\draw[double] 
	(3) -- (4)
	(21) -- (4,1) node (31) [label=above:$\tau_{3,1}$] {};
\draw (3) 
	-- (31)
	-- (5,1) node (32) [label=above:$\tau_{3,2}$] {}
	-- (42)
	-- (52);
\draw (2) -- (21);
\draw (4) -- (5);
\draw (31) -- (41) -- (51);


\draw (6,-1.5) node (q0) [label=below:$q\sigma_{\emptyset}$] {}
        -- (7,-1.5) node (q1) [label=below:$q\sigma_1$] {}
        -- (8,-1) node (q2) [label=above:$q\sigma_{2}$] {}
        -- (9,-1) node (q3) [label=above:$q\sigma_{3}$] {}
        -- (10,-0.5) node (q5-1) [label=above:$q\sigma_{5,-1}$] {}
;
\draw (q1)
	-- (8,-2) node (q11) [label=below:$q\sigma_{1,1}$] {}
	-- (9,-2) node (q21) [label=below:$q\sigma_{2,1}$] {}
	-- (10,-1.5) node (q4) [label=below:$q\sigma_{4}$] {}
;
\draw (q11) -- (q3) ;
\draw[double]
	(q3) -- (q4)
	(q21) -- (10,-2.5) node (q31) [label=below:$q\sigma_{3,1}$] {}
;
\draw (q3) -- (q31);


\draw[blue]
	(5) -- (q0)
	(51) -- (q1)
	(52) -- (q2)
	(53) -- (q3)
	(54) -- (q4)
	(54) -- (q5-1);

\end{tikzpicture}

 \caption{Quantum Hasse diagram of $\mathrm{IG}(2,7)$}
 \label{fig:qhasse.2.7}
\end{figure}

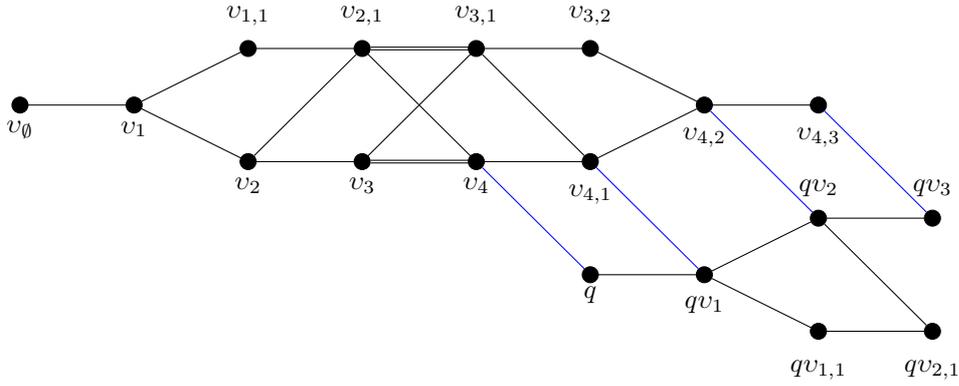
\begin{figure}
 \centering
 \begin{tikzpicture}[scale=1.5]
 
\tikzstyle{every node}=[draw,circle,fill=black,minimum size=6pt,inner sep=0pt]

\draw (0,0) node (0) [label=below:$\upsilon_{\emptyset}$] {}
        -- (1,0) node (1) [label=below:$\upsilon_1$] {}
        -- (2,-0.5) node (2) [label=below:$\upsilon_{2}$] {}
        -- (3,-0.5) node (3) [label=below:$\upsilon_{3}$] {}
        -- (4,0.5) node (31) [label=above:$\upsilon_{3,1}$] {}
        -- (5,0.5) node (32) [label=above:$\upsilon_{3,2}$] {}
        -- (6,0) node (42) [label=below:$\upsilon_{4,2}$] {}
        -- (7,0) node (43) [label=below:$\upsilon_{4,3}$] {};
\draw (1) 
	-- (2,0.5) node (11) [label=above:$\upsilon_{1,1}$] {}
	-- (3,0.5) node (21) [label=above:$\upsilon_{2,1}$] {}
	-- (4,-0.5) node (4) [label=below:$\upsilon_{4}$] {}
	-- (5,-0.5) node (41) [label=below:$\upsilon_{4,1}$] {}
	-- (42);
\draw[double] 
	(21) -- (31)
	(3) -- (4);
\draw 
	(2) --(21)
	(31) -- (41);
\draw (5,-1.5) node (q) [label=below:$q$] {}
	-- (6,-1.5) node (q1) [label=below:$q\upsilon_1$] {}
	-- (7,-1) node (q2) [label=above:$q\upsilon_2$] {}
	-- (8,-1) node (q3) [label=above:$q\upsilon_3$] {}
    (q1)-- (7,-2) node (q11) [label=below:$q\upsilon_{1,1}$] {}
	-- (8,-2) node (q21) [label=below:$q\upsilon_{2,1}$] {}
    (q2)-- (q21);
\draw[blue] (4) -- (q)
	    (41) -- (q1)
	    (42) -- (q2)
	    (43) -- (q3);
\end{tikzpicture}

 \caption{Quantum Hasse diagram of $\mathrm{IG}(2,6)$}
 \label{fig:qhasse.2.6}
\end{figure}

\subsection{Quantum presentation}
\label{sec:qpres}

\begin{prop}[Presentation of $\mathrm{QH}^*\left(\IGl,\Z\right)$]
\label{prop:qpres}
The ring $\mathrm{QH}^*\left(\IGl,\Z\right)$ is ge\-nerated by the classes $\tau_1$, $\tau_{1,1}$ and the quantum parameter $q$. The relations are
\begin{align*}
\det\left(\tau_{1^{1+j-i}}\right)_{1\leq i,j\leq 2n} &= 0,\\
\frac{1}{\tau_1}\det\left(\tau_{1^{1+j-i}}\right)_{1\leq i,j\leq 2n+1}+q &= 0
\end{align*}
\end{prop}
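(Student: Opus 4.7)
The plan is to adapt the proof of the classical presentation (Proposition~\ref{prop:pres.2}) to the quantum setting. Let $A := \Z[a_1, a_2, q]$ with $\deg a_i = i$ and $\deg q = 2n$, and define the graded ring morphism $\phi : A \to \mathrm{QH}^*\left(\IGl, \Z\right)$ by $a_i \mapsto \tau_{1^i}$ and $q \mapsto q$. Surjectivity of $\phi$ follows by induction on degree: the classical Giambelli expresses each Schubert class $\tau_\lambda$ as a polynomial $P_\lambda(\tau_1, \tau_{1,1})$, and evaluating $P_\lambda$ via the quantum product yields $\tau_\lambda$ plus corrections of the form $q \cdot R_\lambda$ in which $R_\lambda$ has strictly smaller degree than $\tau_\lambda$ and hence already lies in the image by induction.

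The key step is to verify that the two proposed relations hold. Set $\delta_r := \det(a_{1+j-i})_{1\leq i,j\leq r}$; expansion along the first column gives the formal polynomial identity $\delta_r = a_1\,\delta_{r-1} - a_2\,\delta_{r-2}$ with $\delta_0 = 1$ and $\delta_1 = a_1$, which translates to $\phi(\delta_r) = \tau_1 \star \phi(\delta_{r-1}) - \tau_{1,1} \star \phi(\delta_{r-2})$. Using only the classical Pieri rule (quantum corrections vanish by degree for $r \leq 2n-1$), one verifies inductively that $\phi(\delta_r) = c_r(\mathcal{Q})$ in this range; in particular $\phi(\delta_{2n-2}) = \tau_{2n-2} + \tau_{2n-1,-1}$ and $\phi(\delta_{2n-1}) = \tau_{2n-1}$. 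Then applying the quantum Pieri rule from Theorem~\ref{theo:qpieri}:
\begin{align*}
\phi(\delta_{2n}) &= \tau_1 \star \tau_{2n-1} - \tau_{1,1} \star \tau_{2n-2} - \tau_{1,1} \star \tau_{2n-1,-1} \\
&= (\tau_{2n-1,1}+q) - \tau_{2n-1,1} - q = 0, \\
\phi(\delta_{2n+1}) &= \tau_1 \star 0 - \tau_{1,1} \star \tau_{2n-1} = -q\,\tau_1,
\end{align*}
so both $\phi(\delta_{2n}) = 0$ and $\phi(\delta_{2n+1}/a_1) + q = 0$ hold.

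To deduce that these relations are complete, one applies a graded Nakayama argument. Let $I \subset A$ be the ideal generated by the two relations; by Proposition~\ref{prop:pres.2} the specialization $A/(I+(q)) \cong \H^*\left(\IGl, \Z\right)$ is free over $\Z$ of rank $2n^2$. Lifting a basis of the specialization to $A/I$ gives a $\Z[q]$-generating set of cardinality $2n^2$, and these elements must be $\Z[q]$-linearly independent because $\mathrm{QH}^*\left(\IGl, \Z\right)$ is free of rank $2n^2$ over $\Z[q]$ and $\phi$ maps them to a $\Z[q]$-basis. Hence the induced map $\overline\phi : A/I \to \mathrm{QH}^*\left(\IGl, \Z\right)$ is an isomorphism. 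The main obstacle is the computation of $\phi(\delta_{2n})$: the quantum correction $\tau_{1,1} \star \tau_{2n-1,-1} = q$ produced by the closed orbit class (via the third case of the quantum Pieri for $\tau_{1,1}$) precisely cancels the correction $+q$ in $\tau_1 \star \tau_{2n-1}$, and this delicate cancellation is what allows the first relation to pass unchanged from the classical to the quantum presentation.
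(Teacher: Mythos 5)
Your proof is correct and, although it follows the same overall strategy as the paper (evaluate the classical relations in the quantum ring, then argue that this yields a complete set of relations), your computation of the $q$-corrections is organized differently and is arguably cleaner. The paper expands $\delta_{2n}$ and $\delta_{2n+1}'$ as explicit integer combinations of the monomials $\Pi_a = (\tau_1)^{2(n-a)}\star(\tau_{1,1})^a$, computes the $q$-term of each $\Pi_a$ separately (only $\Pi_0$ and $\Pi_1$ contribute, with coefficients $(2n-1)q$ and $q$), and then reads off the total $q$-term from the coefficients $1,-(2n-1)$ and $1,-2n$ in the two determinantal expansions. You instead exploit the recurrence $\delta_r = a_1\delta_{r-1} - a_2\delta_{r-2}$ and the identification $\phi(\delta_r)=c_r(\mathcal{Q})$ for $r\le 2n-1$, so that the entire quantum correction is read off from two applications of the quantum Pieri rule at the boundary classes. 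This isolates the role of the closed-orbit class $\tau_{2n-1,-1}$ (hidden inside $c_{2n-2}(\mathcal{Q})$) very explicitly, and makes the cancellation in $\phi(\delta_{2n})$ transparent. For completeness you give a Nakayama-type argument where the paper cites Siebert--Tian; these are the same idea, but your formulation requires you to know in advance that $\mathrm{QH}^*(\IGl,\Z)$ is a free $\Z[q]$-module of rank $2n^2$, which is standard.

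One point you pass over too quickly: from $\phi(\delta_{2n+1})=-q\tau_1$ you cannot immediately write $\phi(\delta_{2n+1}/a_1)+q=0$, because $\phi$ does not commute with division; what you actually get is $\tau_1\star(\phi(\delta'_{2n+1})+q)=0$. This is easily repaired: $\delta'_{2n+1}$ has degree $2n$, and the classical presentation forces $\phi(\delta'_{2n+1})\big|_{q=0}=0$, so $\phi(\delta'_{2n+1})=\lambda q$ for some $\lambda\in\Z$; since $q\tau_1\neq 0$ in $\mathrm{QH}^*(\IGl,\Z)$, the displayed equality forces $\lambda=-1$. Alternatively, one can use the identity $\delta'_{2n+1}=\delta_{2n}-a_2\delta'_{2n-1}$ together with $\phi(\delta_{2n})=0$ to compute $\phi(\delta'_{2n+1})$ directly. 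Either way the gap is minor, but it should be stated.
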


\begin{proof}
 Siebert and Tian proved in \cite[Prop 2.2]{ST} that the quantum relations are obtained by evaluating the classical relations using the quantum product.
Define $\delta_{2n}$ and $\delta_{2n+1}'$ as in the proof of Proposition \ref{prop:pres.sympl} and denote by $\overline{\delta_{2n}}$ and 
$\overline{\delta_{2n+1}'}$ the same expressions with the cup product replaced by the quantum product. 

Now we consider the quantum products $\Pi_a:=(\tau_1)^{2(n-a)}\star(\tau_{1,1})^a$ for $0\leq a\leq n$. For reasons of degree it has no $q$-term 
of degree greater than $1$.
First we prove that $\Pi_a$ has no $q$-term if $a\neq 0,1$. To prove this, we decompose $\Pi_a$ for $a>0$ as 
\[
 \Pi_a=\tau_{1,1}\star\left((\tau_1)^{2(n-a)}(\tau_{1,1})^{a-1}\right).
\]
Notice that for degree reasons, $(\tau_1)^{2(n-a)}(\tau_{1,1})^{a-1}$ has no $q$-term. Moreover, if $a\geq 2$, the classical Pieri formula 
\ref{prop:pieri.1} implies that this product contains only classes $\tau_{c,d}$ with $c<2n-1$. Then we use the quantum Pieri formula 
\ref{theo:qpieri} to conclude that there is no $q$-term in $\Pi_a$. 
We are now left with computing the $q$-term of $\Pi_0$ and $\Pi_1$. Set $\alpha_p:=(\tau_1)^p$ for $p\leq 2n-1$. $\alpha_p$ has no $q$-term. 
We have $\Pi_0=\tau_1\star\alpha_{2n-1}$ and $\Pi_1=\tau_{1,1}\star\alpha_{2n-2}$. We compute recursively the coefficients of $\tau_p$ and 
$\tau_{p-1,1}$ for $p\leq 2n-3$ in $\alpha_p$ using the classical Pieri rule. We find
\[
 \alpha_p=\tau_p+(p-1)\tau_{p-1,1}+\text{terms with lower first part}.
\]
Then 
\[
 \alpha_{2n-2}=\tau_{2n-1,-1}+(2n-2)\tau_{2n-2}+\text{terms with lower first part}
\]
and
\[
 \alpha_{2n-1}=(2n-1)\tau_{2n-1}+\text{terms with lower first part}.
\]
Finally we use the quantum Pieri rule to deduce that
\begin{align*}
 \Pi_0=&\text{classical terms}+(2n-1)q, \\
 \Pi_1=&\text{classical terms}+q.
\end{align*}
But
\begin{align*}
 \overline{\delta_{2n}}&=\Pi_0-(2n-1)\Pi_1+\text{linear combination of $\Pi_a$'s with $a\geq 2$,} \\
 \overline{\delta_{2n+1}'}&=\Pi_0-2n\Pi_1+\text{linear combination of $\Pi_a$'s with $a\geq 2$},
\end{align*}
hence $\overline{\delta_{2n}}=\delta_{2n}$ and $\overline{\delta_{2n+1}'}=\delta_{2n+1}'-q$.
\end{proof}

\subsection{Around a conjecture of Dubrovin}
\label{sec:dubrovin}

In 1994, Dubrovin stated a conjecture relating properties of the quantum cohomology of Fano varieties and properties of their derived category :
\begin{conj}[Dubrovin {\cite[Conj 4.2.2]{dubrovin}}]
 Let $X$ be a Fano variety. The big quantum cohomology of $X$ is generically semisimple if and only if its derived category of coherent sheaves $\db{X}$ admits a full exceptional collection.
\end{conj}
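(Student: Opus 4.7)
The statement is Dubrovin's conjecture in full generality, which is widely open; my plan is therefore to describe a strategic attack on the two implications separately and to indicate honestly where I expect to be stuck. Throughout, I would work with the big quantum cohomology $(H^*(X,\C),\star_t)$ as a germ of a Frobenius manifold $M$ near $t=0$ and with $\db{X}$ equipped with its Euler pairing $\chi$ and $\widehat{\Gamma}$-class.

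For the implication ``generic semisimplicity implies a full exceptional collection,'' I would follow Dubrovin's isomonodromic framework. Generic semisimplicity supplies canonical coordinates on $M$ and a pencil of flat connections whose irregular singularity at $z=\infty$ carries well-defined Stokes data. The plan is: first, extract from the Stokes matrix a distinguished ordered basis of the space of flat sections; second, using the $\widehat{\Gamma}$-conjecture of Galkin--Golyshev--Iritani, promote these flat sections to a collection of integral classes $v_1,\dots,v_r\in K_0(X)_\C$ whose Euler pairing $\chi(v_i,v_j)$ reproduces the Stokes matrix and which, by semisimplicity, is upper triangular with ones on the diagonal; third, lift these $K$-theory classes to actual objects $E_1,\dots,E_r\in\db{X}$ forming an exceptional collection, and check fullness by comparing ranks. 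The technical spine is provided by Bayer--Manin's correspondence between semisimple Frobenius manifolds and stability-like data, together with Kontsevich's homological mirror symmetry program in the cases where a Landau--Ginzburg mirror is available.

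For the converse, ``full exceptional collection implies generic semisimplicity,'' the natural plan is to transport a categorical semisimplicity into an analytic one. Given $\langle E_1,\dots,E_r\rangle=\db{X}$, the endomorphism dg-algebra $A=\bigoplus_{i,j}\mathrm{RHom}(E_i,E_j)$ is formal with semisimple degree-zero part, so the Hochschild cohomology of $\db{X}$ decomposes into $r$ one-dimensional pieces at the unit. Using Costello's categorical enumerative invariants (or Caldararu--Tu, Ganatra--Perutz--Sheridan), one would construct the genus-zero TCFT attached to $\db{X}$ and identify it with the quantum $A$-model on $X$; semisimplicity of the resulting Frobenius structure at $t=0$ in its big locus would then follow from the fact that the primary piece is controlled by $HH^*(\db{X})$ of a semiorthogonal sum of points.

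The main obstacle, and the reason the conjecture is still open, is precisely the bridge between the analytic/categorical side and the geometric/enumerative side. In the forward direction, the Stokes data produce canonical flat sections whose integrality and realizability as $\widehat{\Gamma}\cdot\mathrm{ch}(E_i)$ for genuine objects is conjectural beyond the Gamma conjectures, and fullness is not automatic from Stokes data alone. In the reverse direction, the identification of the categorical TCFT with the geometric big quantum product depends on unresolved foundational choices in Costello's construction and on having an explicit $A_{\infty}$ model for $\db{X}$. Consequently, the realistic outcome of this plan is not a general proof but a reduction to these two foundational inputs plus verification in structured families such as homogeneous spaces, toric Fano varieties, Fanos with a known Landau--Ginzburg mirror, and quasi-homogeneous examples such as $\IGl$, as carried out in the present paper.
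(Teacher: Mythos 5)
You have correctly recognized that the statement is Dubrovin's Conjecture in full generality and that it is open; but precisely for this reason the paper does not, and does not claim to, prove it. The statement appears in the paper as a \emph{conjecture}, not a theorem, and there is no proof of it in the paper to compare your proposal against. What the paper actually does in Section~\ref{sec:dubrovin} is verify that the conjecture's two sides are simultaneously satisfied for the single family $\IGl$: it proves (Theorem~\ref{theo:semisimple}) that $\mathrm{QH}^*(\IGl,\Z)_{q\neq 0}$ is semisimple by exhibiting the quantum cohomology as the $\mathfrak{S}_2$-invariants of $\Z[x_1,x_2,q]/(h_{2n}(x_1,x_2),h_n(x_1^2,x_2^2)+q)$ and counting simple points of the spectrum, which then yields generic semisimplicity of the big quantum cohomology; and it independently constructs (the final theorem of the paper) a full exceptional collection $\mathcal{C}^I=\{\SS^l\UU^*(k)\}$ in $\db{\IG(2,2n+1)}$ by adapting Kuznetsov's Lefschetz collection for $\IG(2,2n)$, with fullness proved by induction on $n$ using the Koszul resolution of Lemma~\ref{lem:exc.pair.2}. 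No Gamma-class machinery, Stokes data, or categorical TCFT enters; the two sides are checked by direct computation, not derived from one another.

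Your sketch of the general strategy --- Stokes data and $\widehat{\Gamma}$-integral structure in one direction, categorical enumerative invariants in the other --- is a reasonable summary of the state of the art and you are candid about the unresolved bridges. But as a response to this particular statement it is misaligned: the task is not to attack the conjecture but to recognize that it is quoted as motivation, and that the paper's contribution is to add $\IGl$ to the list of verified examples. You should have said that no proof is expected here and then described, or at least pointed to, the two concrete verifications that the paper carries out for this family.
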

Here we check this conjecture for odd symplectic Grassmannians of lines. We first show that the (small) quantum cohomology ring of $\IGl$, localized at $q\neq 0$, is semisimple. To do this we adapt the presentation of Proposition \ref{prop:qpres} to make the symmetries more apparent :

\begin{theo}
 \label{theo:semisimple}
\begin{enumerate}
 \item The ring $\mathrm{QH}^*\left(\IGl,\Z\right)$ is isomorphic to $R^{\mathfrak{S}_2}$, where
\[
 R=\Z\left[x_1,x_2,q\right]\slash \left(h_{2n}(x_1,x_2),h_n(x_1^2,x_2^2)+q\right)
\]
and $x_1$ and $x_2$ are the Chern roots of the tautological bundle $\mathrm{S}$ and $h_r(y_1,y_2)$ is the $r$-th complete symmetric 
function of the variables $y_1,y_2$.
 \item $\mathrm{QH}^*\left(\IGl,\Z\right)_{q\neq 0}$ is semisimple.
\end{enumerate}
\end{theo}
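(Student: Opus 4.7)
The plan is to translate the presentation from Proposition \ref{prop:qpres} into the symmetric variables $x_1, x_2$ for (1), and then to make semisimplicity in (2) transparent by computing the spectrum of the resulting ring over $\C$ explicitly.

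For (1), I would take $x_1, x_2$ to be the Chern roots of the dual tautological bundle $\mathrm{S}^*$, so that $\tau_{1^i} = c_i(\mathrm{S}^*) = e_i(x_1, x_2)$ (as in the proof of Proposition \ref{prop:pres.sympl}). The generating-series identity $\bigl(\sum_i e_i t^i\bigr)\bigl(\sum_i (-1)^i h_i t^i\bigr) = 1$ is exactly the recursion satisfied by the $\delta_r$, so $\delta_r = h_r(x_1, x_2)$, turning the first relation of Proposition \ref{prop:qpres} into $h_{2n}(x_1, x_2) = 0$. An elementary geometric-series identity $h_{2n+1}(x_1, x_2) = (x_1 + x_2)\, h_n(x_1^2, x_2^2)$ (both sides equal $(x_1^{2n+2} - x_2^{2n+2})/(x_1 - x_2)$) identifies $\delta'_{2n+1}$ with $h_n(x_1^2, x_2^2)$, so the second relation becomes $h_n(x_1^2, x_2^2) + q = 0$. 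Because $\Z[x_1, x_2, q]$ is free of rank two over the invariant subring $\Z[e_1, e_2, q]$ and both relation generators are symmetric, the quotient $R$ is free of rank two over $R^{\mathfrak{S}_2} = \Z[e_1, e_2, q]/(\delta_{2n}, \delta'_{2n+1} + q)$, and this latter ring is exactly $\mathrm{QH}^*(\IGl, \Z)$ under $\tau_1 \leftrightarrow e_1$, $\tau_{1,1} \leftrightarrow e_2$.

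For (2), I would specialize $q$ to an arbitrary $q_0 \in \C^*$ and count the geometric points of $\operatorname{Spec}(R \otimes_{\Z[q]} \C)$. The factorization $h_{2n}(x_1, x_2) = (x_1^{2n+1} - x_2^{2n+1})/(x_1 - x_2)$ forces $x_2 = \zeta x_1$ with $\zeta^{2n+1} = 1$ and $\zeta \neq 1$ (the diagonal $x_1 = x_2$ would give $x_1 = 0$, incompatible with $q_0 \neq 0$ in the second relation). Using $\zeta^{2n+2} = \zeta$, the second relation reduces to $x_1^{2n}/(1 + \zeta) = -q_0$, which is well-posed since $\zeta \neq -1$ (because $2n+1$ is odd) and admits exactly $2n$ solutions in $x_1$. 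This produces $4n^2$ simple points; since $\zeta \neq \zeta^{-1}$, the $\mathfrak{S}_2$-swap $(x_1, x_2) \leftrightarrow (x_2, x_1)$ acts freely on them, so $\operatorname{Spec}(R^{\mathfrak{S}_2} \otimes \C)$ consists of $2n^2$ reduced points. This matches the total $\C$-dimension $2n^2$ recalled in the proof of Proposition \ref{prop:pres.1}, proving reducedness and hence semisimplicity of $\mathrm{QH}^*(\IGl, \Z)_{q \neq 0}$.

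The only delicate point is the freeness argument used in (1) to descend the two relations to the invariant subring and identify $R^{\mathfrak{S}_2}$ with $\mathrm{QH}^*(\IGl, \Z)$; it goes through cleanly because $\Z[x_1, x_2, q]$ is $\Z[e_1, e_2, q]$-free of rank two and both generators are symmetric. Once (1) is established, the spectrum computation in (2) is an elementary exercise in roots of unity.
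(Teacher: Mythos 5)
Your proposal follows the paper's own two-step strategy almost exactly: rewrite $\delta_r$ as complete homogeneous symmetric polynomials via the recursion/generating-series identity to get part (1), and then count geometric points of the spectrum at a fixed nonzero $q$ and compare against the rank to deduce reducedness for part (2). The few places where you differ are small improvements rather than a different route. In (1), you spell out why descending the two symmetric relations to the invariant subring is legitimate (freeness of $\Z[x_1,x_2,q]$ of rank two over $\Z[e_1,e_2,q]$), which the paper leaves implicit. In (2), the paper first reduces to showing that $R$ itself is semisimple (i.e. reduced) and counts $4n^2$ distinct solutions of the two equations in $R$, comparing with $\mathrm{rk}_\Z R = 2\cdot 2n^2$; you instead go one step further and quotient by the free $\mathfrak{S}_2$-action on the fibre, exhibiting $2n^2$ reduced points of $\operatorname{Spec}(R^{\mathfrak{S}_2}\otimes\C)$ directly matching $\dim_\C \mathrm{QH}^*\otimes\C = 2n^2$. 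The paper's reduction to $R$ is equally valid (a subring of a reduced finite-dimensional commutative $\C$-algebra is reduced), so the two point-counts are equivalent. Also note the harmless sign ambiguity: you take $x_1,x_2$ to be roots of $\mathrm{S}^*$ while the statement says $\mathrm{S}$; since $h_{2n}$ and $h_n(x_1^2,x_2^2)$ are even, the presentation is insensitive to this choice.
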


\begin{proof}
\begin{enumerate}
 \item  We use the recurrence relation \eqref{eq:rec.delta} from Proposition \ref{prop:pres.sympl} to prove that $\delta_r=h_r(x_1,x_2)$ for all $r$.
 Then 
\[
  \delta_{2n+1}'=\frac{h_{2n+1}(x_1,x_2)}{x_1+x_2}=h_{n}(x_1^2,x_2^2).
\]
 \item It is enough to prove the semisimplicity of $R$ localized at $q\neq 0$. We may assume $q=-1$. Using 
$(x_1-x_2)h_{2n}(x_1,x_2)=x_1^{2n+1}-x_2^{2n+1}$ and noticing that we must have $x_2\neq 0$, the first relation implies that $x_1=\zeta x_2$, 
where $\zeta\neq 1$ is a $(2n+1)$-th root of unity. Replacing in the second relation $h_n(x_1^2,x_2^2)-1=0$, we get $x_1^{2n}=1+\zeta$. Since 
$\zeta\neq -1$, this equation has $2n$ distinct solutions. So we have $2n$ distinct solutions for $x_1$, and for each $x_1$ we have $2n$ distinct 
solutions for $x_2$, which gives us (at least) $4n^2$ distinct solutions for the pair $(x_1,x_2)$. But the number of solutions, counted with their 
multiplicity, should be equal to twice the rank of $\mathrm{H}^*\left(\IGl,\Z\right)$, which is equal to $2n^2$. So there are no other solutions, 
and all solutions are simple. Hence the semisimplicity. \qedhere
\end{enumerate}
\end{proof}
Since semisimplicity of the small quantum cohomology implies generic semisimplicity of the big one, Theorem \ref{theo:semisimple} proves that the big quantum cohomology of $\IGl$ is generically semisimple. So to confirm Dubrovin's conjecture in this case it is enough to find a full exceptional collection. 

In \cite{kuznetsov}, Kuznetsov computed full exceptional collections for the symplectic Grassmannian of lines. His method can easily be adapted to the odd symplectic case. Here we denote the tautological bundle by $\UU$ instead of $\SS$ to avoid confusion with symmetric powers. We denote by $\YY_{2n+1}^I$ the following collection of integer pairs, ordered lexicographically :
\[
 \YY_{2n+1}^I := \left\{ (k,l) \in \Z^2 \mid 0 \leq k \leq 2n-1 \text{ et } 0 \leq l \leq n-1 \right\}.
\]
\begin{theo}
 The collection $\mathcal{C}^I := \left\{ \SS^l\UU^*(k) \mid (k,l) \in \YY_{2n+1}^I \right\}$ is a full exceptional collection in $\db{\IG(2,2n+1)}$.
\end{theo}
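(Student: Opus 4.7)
The plan is to adapt Kuznetsov's argument from \cite{kuznetsov} to the odd symplectic setting, making essential use of the hyperplane embedding $\mathbf{j}:\IG\hookrightarrow\G(2,2n+1)$ from \ref{sec:embed.A}. As a first sanity check, the cardinality $|\YY_{2n+1}^I|=2n\cdot n=2n^2$ equals the rank of $\H^*(\IG,\Z)$ coming from the Poincaré polynomial recalled in the proof of Proposition \ref{prop:pres.1}; this matches the length of any full exceptional collection on $\IG$ and is a necessary condition for fullness.

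Exceptionality can be proved by Borel--Weil--Bott on the ambient Grassmannian. Each $\SS^l\UU^*(k)$ is the restriction via $\mathbf{j}$ of the Schur bundle $\Sigma^{(k+l,k)}\UU^\vee$ on $\G(2,2n+1)$, and since $\IG$ is cut out of $\G(2,2n+1)$ by the section $\omega\in\Lambda^2 V^*=\H^0(\G,\OO(1))$, the Koszul resolution
\[
\SEC{\OO_{\G}(-1)}{\OO_{\G}}{\mathbf{j}_*\OO_{\IG}}
\]
combined with the adjunction $\mathrm{RHom}_{\IG}(\mathbf{j}^*E,\mathbf{j}^*F)=\mathrm{RHom}_{\G}(E,F\otimes\mathbf{j}_*\OO_{\IG})$ expresses every Ext group on $\IG$ as the cone of a morphism of Ext groups on $\G(2,2n+1)$. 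Using the rank-two identity $(\SS^l\UU^*)^\vee\simeq\SS^l\UU^*(-l)$ and the $\mathrm{GL}_2$ Clebsch--Gordan decomposition
\[
\SS^{l_1}\UU^*\otimes\SS^{l_2}\UU^*=\bigoplus_{j=0}^{\min(l_1,l_2)}\SS^{l_1+l_2-2j}\UU^*(j),
\]
the relevant Ext groups reduce to cohomologies of $\SS^m\UU^*(s)$ on $\G(2,2n+1)$, which is standard Borel--Weil--Bott. A case-by-case check then shows that the required vanishings and the non-vanishing $\Hom{E}{E}=\C$ hold precisely in the prescribed range $0\leq k\leq 2n-1$, $0\leq l\leq n-1$.

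The main obstacle is fullness. My route would be to compare $\mathcal{C}^I$ with the restriction of Kapranov's full exceptional collection $\mathcal{K}=\{\Sigma^{(a,b)}\UU^\vee\}_{0\leq b\leq a\leq 2n-1}$ on $\G(2,2n+1)$. The restricted collection $\mathbf{j}^*\mathcal{K}$ generates $\db{\IG}$ by standard descent through the Koszul resolution above, so it suffices to prove that each $\mathbf{j}^*\Sigma^{(a,b)}\UU^\vee$ lies in the triangulated hull $\langle\mathcal{C}^I\rangle$. The two collections overlap but differ: $\mathcal{K}$ contains $n(n+1)/2$ bundles outside $\mathcal{C}^I$ (those with $a-b\geq n$), while $\mathcal{C}^I$ contains $n(n-1)/2$ bundles outside $\mathcal{K}$ (those with $k+l\geq 2n$), the net gap of $n$ matching $|\mathcal{K}|-|\YY_{2n+1}^I|$. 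The substance of the proof is to realise each missing Kapranov bundle on $\IG$ as an explicit iterated extension of bundles from $\mathcal{C}^I$, via staircase Koszul complexes built from the form $\omega$, exactly as in Kuznetsov's symplectic argument, with minor index shifts reflecting the odd dimension of $V$. Carrying out these mutations explicitly is the main technical step.
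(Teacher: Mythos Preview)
Your treatment of exceptionality is essentially the paper's: both exploit that $\IG$ is a hyperplane section of $\G(2,2n+1)$, with the paper invoking Kuznetsov's general result for Lefschetz collections on hyperplane sections \cite[Prop.~2.4]{kuznetsov} and you spelling out the Borel--Weil--Bott computation that underlies it.

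For fullness the routes diverge. You propose to restrict Kapranov's collection $\mathcal{K}$ to $\IG$ and then mutate the $n(n+1)/2$ bundles with $l\geq n$ into $\langle\mathcal{C}^I\rangle$ via staircase complexes. The paper instead runs an induction on $n$: it embeds $\IG(2,2n-1)$ into $\IG(2,2n+1)$ as the zero locus of a generic section of $\UU^*\oplus\UU^*$, writes down the corresponding length-$4$ Koszul resolution of $i_{V*}\OO$, and shows that any object right-orthogonal to $\mathcal{C}^I$ restricts trivially to every such sub-Grassmannian, hence vanishes. The inductive scheme only needs to extend the generating subcategory by one step in $l$ (from $l\leq n-1$ to $l\leq n$), rather than all the way to $l=2n-1$ as your approach would require.

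Your plan is reasonable in outline, but the phrase ``exactly as in Kuznetsov's symplectic argument, with minor index shifts'' undersells the work. The paper explicitly says ``as in the even case, it remains to show by induction that it is full'', so Kuznetsov's own method for $\IG(2,2n)$ is already the inductive one, not a direct mutation from Kapranov. Moreover, even the single step the paper does carry out---showing $\SS^n\UU^*(k)\in\langle\mathcal{C}^I\rangle$ for $0\leq k\leq 2n-1$---is not uniform: most twists follow from Kuznetsov's long exact sequences, but the boundary case $\SS^n\UU^*(n-1)$ needs a separate bicomplex imported from $\IG(2,2n+2)$. Pushing your strategy through all $l$ up to $2n-1$ would presumably meet further such boundary obstructions, and you have not indicated how to handle them. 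So while the strategy is not wrong, the hard part is precisely what you have left as ``the main technical step'', and the paper's inductive approach sidesteps most of it.
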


\begin{proof}
 We use the Lefschetz full exceptional collection for the Grassmannian $\G(2,2n+1)$ introduced by Kuznetsov. Let
\[
 \YY_{2n+1} := \left\{ (k,l) \in \Z^2 \mid 0 \leq k \leq 2n \text{ et } 0 \leq l \leq n-1 \right\}.
\]
Then $\mathcal{C} := \left\{ \SS^l\UU^*(k) \mid (k,l) \in \YY_{2n+1} \right\}$ is a full exceptional collection in $\db{\G(2,2n+1)}$ (cf \cite[Thm 4.1]{kuznetsov}). Since $\mathcal{C}$ is a Lefschetz exceptional collection, it follows from \cite[Prop 2.4]{kuznetsov} that $\mathcal{C}^I$ is an exceptional collection for $\IG(2,2n+1)$. As in the even case, it remains to show by induction that it is full. We first introduce
\[
\widetilde{\YY}_{2n-1}^I = \left\{ (k,l) \in \Z^2 \mid 0 \leq k \leq 2n-1 \text{ and } 0 \leq l \leq n \right\}
\]
and prove the
\begin{lem}
\label{lem:exc.pair.1}
 For all $(k,l) \in \widetilde{\YY}_{2n-1}^I$, the vector bundle $S^l\UU^*(k)$ lies in the subcategory of the derived category $\db{\IG(2,2n+1)}$ of $\IGl$ generated by the Lefschetz collection $\mathcal{C}^I$.
\end{lem}

\begin{proof}[Proof of the lemma]
 We have $\widetilde{\YY}_{2n-1}^I\setminus \YY_{2n-1}=\left\{(0,n),\dots,(2n-1,n)\right\}$. Moreover, we have the following exact sequences on $\G(2,2n+1)$ for $0\leq p\leq n$ from \cite[Equation (11)]{kuznetsov} :
\begin{align*}
 0 & \ra S^{2n-1-p}\UU^* \ra W \otimes S^{2n-2-p}\UU^*(1) \ra \dots \ra \bigwedge^{2n-2-p}W \otimes \UU^*(2n-2-p) \ra \\
   & \ra \bigwedge^{2n-1-p}W \otimes \UU^*(2n-1-p) \ra \bigwedge^p W^* \otimes \OO(2n-p) \ra \bigwedge^{p-1} W^* \otimes \UU^*(2n-p) \\
   & \ra \dots \ra S^p\UU^*(2n-p) \ra 0.
\end{align*}
These exact sequences can be restricted to $\IG(2,2n+1)$. 

For $p=n$, we get a resolution of $S^n\UU^*(n)$ by objects of the subcategory generated by $\mathcal{C}$. Tensoring by $\OO(i)$ for $1 \leq i \leq n-1$, we also get resolutions for $S^n\UU^*(n+1),\dots,S^n\UU^*(2n-1)$. Then for $p=n-1$, we get a resolution of $S^n\UU^*$, and again, after tensoring by $\OO(i)$ for $1 \leq i \leq n-2$, we obtain resolutions of $S^n\UU^*(1),\dots,S^n\UU^*(n-2)$. 
To conclude the proof, we are left with finding a resolution of $S^n\UU^*(n-1)$ by objects of the subcategory generated by $\mathcal{C}$. Let $\overline{W}\supset W$ be a $(2n+2)$-dimensional vector space endowed with a symplectic form $\overline{\omega}$ extending $\omega$. On $\G_{\overline{\omega}}(2,\overline{W})$, we have the following bicomplex from \cite[Prop 5.3]{kuznetsov} :

 \begin{tikzpicture}[scale=1.5]
 
\tikzstyle{every node}=[draw,circle,fill=black,minimum size=6pt,inner sep=0pt]

\draw (0,0) node (0) [label=below:$\upsilon_{\emptyset}$] {}
        -- (1,0) node (1) [label=below:$\upsilon_1$] {}
        -- (2,-0.5) node (2) [label=below:$\upsilon_{2}$] {}
        -- (3,-0.5) node (3) [label=below:$\upsilon_{3}$] {}
        -- (4,0.5) node (31) [label=above:$\upsilon_{3,1}$] {}
        -- (5,0.5) node (32) [label=above:$\upsilon_{3,2}$] {}
        -- (6,0) node (42) [label=below:$\upsilon_{4,2}$] {}
        -- (7,0) node (43) [label=below:$\upsilon_{4,3}$] {};
\draw (1) 
	-- (2,0.5) node (11) [label=above:$\upsilon_{1,1}$] {}
	-- (3,0.5) node (21) [label=above:$\upsilon_{2,1}$] {}
	-- (4,-0.5) node (4) [label=below:$\upsilon_{4}$] {}
	-- (5,-0.5) node (41) [label=below:$\upsilon_{4,1}$] {}
	-- (42);
\draw[double] 
	(21) -- (31)
	(3) -- (4);
\draw 
	(2) --(21)
	(31) -- (41);
\draw (5,-1.5) node (q) [label=below:$q$] {}
	-- (6,-1.5) node (q1) [label=below:$q\upsilon_1$] {}
	-- (7,-1) node (q2) [label=above:$q\upsilon_2$] {}
	-- (8,-1) node (q3) [label=above:$q\upsilon_3$] {}
    (q1)-- (7,-2) node (q11) [label=below:$q\upsilon_{1,1}$] {}
	-- (8,-2) node (q21) [label=below:$q\upsilon_{2,1}$] {}
    (q2)-- (q21);
\draw[blue] (4) -- (q)
	    (41) -- (q1)
	    (42) -- (q2)
	    (43) -- (q3);
\end{tikzpicture}

whose associated total complex is exact. Restricting this complex to
\[
 \G_\omega(2,W)\subset\G_{\overline{\omega}}(2,\overline{W}),
\]
we get a complex such that every entry except $S^n\UU^*(n-1)$ is an object of the subcategory generated by $\mathcal{C}$. Hence $S^n\UU^*(n-1)$ also belongs to the latter category.
\end{proof}
We will need a further lemma for the inductive step :
\begin{lem}
\label{lem:exc.pair.2}
 Let $V\subset W$ be a $(2n-1)$-dimensional vector space such that $\omega_{\mid V}$ has maximal rank. Let $X:=\IG(2,W)$, $X_V:=\IG(2,V)$ and $i_V : X_V \hookrightarrow X$ be the natural embedding. We get the following Koszul resolution for ${i_V}_*\OO_{X_V}$~:
\begin{align*}
 0 & \ra \OO_{X}(-2) \ra \UU_X(-1) \oplus \UU_X(-1) \ra \OO_{X}(-1)^{\oplus 3} \oplus S^2\UU_X \ra \UU_X \oplus \UU_X \ra \\
   & \ra \OO_{X} \ra {i_V}_*\OO_{X_V} \ra 0.
\end{align*}
\end{lem}
\begin{proof}[Proof of the lemma]
To each vector space $V$ as above corresponds a section $\phi_V$ of $\UU^*\oplus\UU^*$ on $\IG(2,W)$ ; moreover, the zero locus of $\phi_V$ is $\IG(2,V)\subset\IG(2,W)$. Since
\[
 \dim X_V=4n-7=\dim X-4=\mathrm{rg}( \UU_X^* \oplus \UU_X^* ),
\]
any such section $\phi_V$ is regular, hence the sheaf $i_*\OO_{X_V}$ admits a Koszul resolution of the above form.
\end{proof}
We may now use induction on $n$ to prove the theorem. For $n=1$, the result is obvious. Now assume that $n\geq 2$, that the result is proved for $n-1$, and that the Lefschetz collection for $n$ is not full. Then there exists an object $F\in\db{\IGl}$ which is right orthogonal to all bundles in the collection, \emph{i.e} such that :
\[
 0 = \mathrm{RHom}( S^l\UU^*(k) , F ) = \H^\bullet ( X , S^l\UU^*(-k) \otimes F )
\]
for all $(k,l)\in\widetilde{\YY}_{2n-1}^I$. Let $V$ be such as in Lemma \ref{lem:exc.pair.2} and $i_V : X_V \hookrightarrow X$ be the embedding. We will prove $i_V^* F=0$. Let $(k,l)\in\widetilde{\YY}_{2n-1}^I$. Tensoring the resolution of Lemma \ref{lem:exc.pair.2} by $S^l\UU^*(-k) \otimes F$, we get
\begin{align*}
 0 & \ra S^l\UU(-2-k) \otimes F \ra \left( S^{l+1}\UU(-1-k) \otimes F \oplus S^{l-1}\UU(-2-k) \otimes F \right)^{\oplus 2} \ra \\
   & \ra \left( S^l\UU(-1-k) \otimes F \right)^{\oplus 4} \oplus S^{l+2}\UU(-k) \otimes F \oplus S^{l-2} \UU (-2-k) \otimes F \ra \\
   & \ra \left( S^{l+1}\UU(-k) \otimes F \oplus S^{l-1}\UU(-1-k) \otimes F\right)^{\oplus 2} \ra S^l\UU(-k) \otimes F \ra \\
   & \ra S^l\UU(-k) \otimes F \otimes {i_V}_*\OO_{X_V} \ra 0.
\end{align*}
Moreover
\[
 S^l\UU(-k) \otimes F \otimes {i_V}_*\OO_{X_V} \cong {i_V}_* \left( {i_V}^* (S^l\UU(-k) \otimes F) \right) \cong {i_V}_* \left( S^l\UU(-k) \otimes {i_V}^* (F) \right).
\]
If $(k,l)\in\YY_{2n-1}^I$, then $(k+2,l)$, $(k+2,l-1)$, $(k+1,l+1)$, $(k+1,l)$, $(k+2,l-2)$, $(k,l+2)$, $(k,l+1)$, $(k+1,l-1)$ and $(k,l)$ are in $\widetilde{\YY}_{2n-1}^I$. Hence the cohomology of the five first terms of the above complex vanishes, and
\[
 \mathrm{RHom}_V( S^l\UU^*(k) , {i_V}^* F ) = \H^\bullet ( X_V , S^l\UU^*(-k) \otimes {i_V}^* F ) = 0 
\]
for all $(k,l) \in\YY_{2n-1}^I$. By the induction hypothesis, we get ${i_V}^* F = 0$.

\begin{lem}
\label{lem:annul.cat}
 If, for some $F\in\db{\IGl}$, we have ${i_V}^* F = 0$ for every $(2n-1)$-dimensional vector space $V$ for which the conditions of Lemma \ref{lem:exc.pair.2} hold, then $F=0$.
\end{lem}

\begin{proof}[Proof of lemma]
 If $F \neq 0$, let $q$ be the maximal integer such that $\H^q(F)\neq 0$. Let $P\in\mathrm{Supp} \H^q(F)$. Then as long as $n\geq 2$, there exists a $(2n-1)$-dimensional subspace $V$ such that $P\subset V$ and $\omega_{\mid V}$ has maximal rank. The functor $i_V^*$ being left exact, we obtain that $\H^q(i_V^* F)\neq 0$, hence $i_V^* F\neq 0$.
\end{proof}
This concludes the proof of the theorem.
\end{proof}

It should be mentioned that it is not known whether the Dubrovin conjecture holds for the symplectic Grassmannian of lines. Indeed, although Kuznetsov has found a full exceptional collection for these varieties, Chaput and Perrin proved in \cite[Thm. 4]{CP} that their small quantum cohomology is not semisimple. What happens for the big quantum cohomology is still unknown.


\bibliographystyle{alpha}
\bibliography{biblio}

\end{document}